\newtheorem*{rep@theorem}{\rep@title}
\newcommand{\newreptheorem}[2]{%
\newenvironment{rep#1}[1]{%
 \def\rep@title{#2 \ref{##1}}%
 \begin{rep@theorem}}%
 {\end{rep@theorem}}}
\newtheorem{theorem}{Theorem}[section]
\newtheorem{lemma}[theorem]{Lemma}
\newtheorem{cor}[theorem]{Corollary}
\theoremstyle{definition}
\newtheorem{definition}[theorem]{Definition}
\newtheorem{claim}{Claim}
 \newenvironment{claimproof}{\begin{proof}}{\end{proof}}
\def\dotminussym#1#2{%
  \setbox0=\hbox{$\m@th#1-$}%
  \kern.5\wd0%
  \hbox to 0pt{\hss\hbox{$\m@th#1-$}\hss}%
  \raise.6\ht0\hbox to 0pt{\hss$\m@th#1.$\hss}%
  \kern.5\wd0}
\mathchardef\mhyphen="2D
\begin{document}

\begin{frontmatter}[classification=text]

\title{An Analytic Approach to Sparse Hypergraphs: Hypergraph Removal}
\author[hpt]{Henry Towsner}

\begin{abstract}
The use of tools from analysis to address problems in graph theory has become an active area of research, using the construction of analytic limit objects from sequences of graphs and hypergraphs.  We extend these methods to sparse but pseudorandom hypergraphs.  We use this framework to give a proof of hypergraph removal for sub-hypergraphs of sparse random hypergraphs.
\end{abstract}

\end{frontmatter}

\section{Introduction}

In this paper we bring together two recent trends in extremal graph theory: the study of ``sparse random'' analogs of theorems about dense graphs, and the use of methods from analysis and logic to handle complex dependencies of parameters.  To illustrate these methods, we will prove a version of the Hypergraph Removal Lemma for dense sub-hypergraphs of sparse but sufficiently pseudorandom hypergraphs.

\subsection{What is Sparse Hypergraph Removal?}

The original removal theorem was Rusza and Szemer\'edi's Triangle Removal Lemma \cite{rusza:MR519318}, which states:
\begin{theorem}
  For every $\epsilon>0$, there is a $\delta>0$ such that whenever $G\subseteq {V\choose 2}$ is a graph with at most $\delta|V|^3$ triangles, there is a set $C\subseteq G$ with $|C|\leq\epsilon|V|^2$ such that $G\setminus C$ contains no triangles at all.
\end{theorem}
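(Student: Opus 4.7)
The plan is to prove this classical statement via Szemerédi's Regularity Lemma. Given $\epsilon > 0$, I introduce auxiliary parameters: a regularity parameter $\epsilon'>0$ and a density threshold $d > 0$, both chosen as small multiples of $\epsilon$ (concretely, $d = \epsilon/3$ and $\epsilon' = \epsilon/3$). Applying the regularity lemma to $G$ yields a partition $V = V_1 \sqcup \cdots \sqcup V_k$ into nearly-equal parts with $k$ bounded in terms of $\epsilon'$, such that all but at most $\epsilon'\binom{k}{2}$ of the pairs $(V_i,V_j)$ are $\epsilon'$-regular.

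The removal set $C$ is the union of three families of edges: (i) edges with both endpoints in a single $V_i$; (ii) edges lying in irregular pairs $(V_i,V_j)$; (iii) edges lying in regular pairs whose density is less than $d$. A direct count shows these contribute at most $|V|^2/(2k)$, $\epsilon'|V|^2$, and $d|V|^2/2$ edges respectively, so with the parameter choices above and $|V|$ sufficiently large we obtain $|C| \leq \epsilon|V|^2$.

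The main claim is that for $\delta$ small enough (depending on $\epsilon$ through $k$ and $d$), $G \setminus C$ contains no triangle at all. Suppose to the contrary that $(x,y,z)$ is a triangle in $G \setminus C$ with $x\in V_i$, $y \in V_j$, $z \in V_\ell$. By construction, these three parts must be distinct, and each of the three pairs among them is $\epsilon'$-regular with density at least $d$. The triangle counting lemma, which is the technical heart of the argument, then asserts that $G$ itself contains at least $(d - 2\epsilon')^3 |V_i||V_j||V_\ell| \geq c(\epsilon)\, |V|^3/k^3$ triangles with one vertex in each of $V_i, V_j, V_\ell$. Choosing $\delta$ strictly smaller than $c(\epsilon)/k^3$ contradicts the hypothesis that $G$ contains at most $\delta|V|^3$ triangles.

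The main obstacle is the triangle counting lemma itself: one must show that three $\epsilon'$-regular pairs among $V_i,V_j,V_\ell$ with density $\geq d$ force many triangles. This is proved by using regularity in $(V_i,V_j)$ and $(V_j,V_\ell)$ to argue that almost every vertex $y \in V_j$ has neighborhoods in $V_i$ and in $V_\ell$ of essentially the expected size; regularity of $(V_i,V_\ell)$ applied to these two neighborhoods then gives the expected density of edges between them, producing the triangle count. A secondary subtlety is that $k$ can be astronomically large relative to $\epsilon'$ (tower-type), forcing $\delta$ to be extremely small as a function of $\epsilon$, but this only affects the quantitative dependence and not the qualitative statement.
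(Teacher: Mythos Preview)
Your proof is correct; it is the classical Ruzsa--Szemer\'edi argument via the Regularity Lemma and the Triangle Counting Lemma. The paper, however, does not prove this theorem directly: it is quoted in the introduction as a known result, and the paper instead proves the general hypergraph removal lemma (Theorem~\ref{dense_hypergraph_removal}) by an analytic route that specializes to the triangle case. There, one assumes for contradiction that no $\delta$ works, takes an ultraproduct of a sequence of counterexamples to obtain an infinite model equipped with a Loeb-type measure $\nu^V$, verifies via a Fubini decomposition (Lemma~\ref{product_regular}) that $\nu^V$ has $J$-regularity for all $J$, applies the infinitary counting theorem (Theorem~\ref{counting}) to produce definable removal sets $B_I$, and then uses \L o\'s's theorem to push these back to some finite counterexample, contradicting its choice. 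Your approach is explicit and finitary, yielding (tower-type) quantitative bounds; the paper's approach is soft and non-effective but packages the regularity and counting steps as measure-theoretic statements that later generalize verbatim to the sparse setting, which is the real point of the paper.
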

This result was later extended to graphs other than triangles \cite{erdos:MR932119}, and ultimately to hypergraphs \cite{frankl:MR1884430,MR2373376,nagle:MR2198495}.  All these arguments depend heavily on the celebrated Szemer\'edi Regularity Lemma \cite{MR540024}, and its generalization, the hypergraph regularity lemma \cite{rodl:MR2069663,MR2373376}.  (Recently, Fox \cite{fox:MR2811609} has given a proof of graph removal without the use of the regularity lemma, which gives better bounds as a result.)

This statement is only meaningful for dense graphs, when $|G|>\epsilon|V|^2$, since otherwise we could simply remove all of $G$.  Various generalizations extend this to sparser graphs by considering ``relative graph removal'': we consider the case where $G$ is contained in an ambient graph $\Gamma$, where $\Gamma$ is sparse, and we measure sizes relative to $\Gamma$.  To make sense of this for arbitrary $\epsilon$, we need to consider a sequence of ambient graphs $\{\Gamma_n\}$.  Relative triangle removal for $\{\Gamma_n\}$ states:
\begin{quote}
    For every $\epsilon>0$, there are $\delta>0$ and $N$ such that whenever $n\geq N$ and $G\subseteq \Gamma_n$ is such that $|\{\text{triangles in }G\}|<\delta|\{\text{triangles in }\Gamma_n\}|$, there is a set $C\subseteq G$ with $|C|\leq\epsilon|\Gamma_n|$ such that $G\setminus C$ contains no triangles at all.
\end{quote}
Conventional triangle removal is the case where each $\Gamma_n$ is the complete graph on $n$ vertices.  The main case that has been studied is when $\Gamma_n$ is chosen to be a random graph on $n$ vertices which is ``not too sparse''---in practice, chosen so that it has roughly $n^{2-1/r}$ edges for some $r$.  This generalization was proven, essentially, by Kohayakawa, \L{}uczak, and R\"odl \cite{kohayakawa:MR1661982} and stated directly in this form in \cite{MR2815608}.  Their proof passes through a \emph{pseudorandomnes} assumption: they prove triangle removal relative to $\Gamma_n$ when $\Gamma_n$ is \emph{$(p,\gamma p^3n)$-bijumbled} where $p\geq n^{-1/2}$, and it is known that when $\Gamma_n$ is chosen randomly with $n^{2}p$ edges then, with high probability, $\Gamma_n$ is $(p,\gamma p^3n)$-bijumbled.

Various generalizations to other graphs and ultimately to arbitrary hypergraphs have appeared in the literature \cite{kohayakawa:MR1379396,green:MR2415379,ziegler08,MR2279549, ConlonFoxZhao,MR3385638,MR3273450,MR3327533}.  The most general results of this kind have roughly the form:
\begin{theorem}\label{thm:random_removal}
  For every $k$-uniform hypergraph $K$ with $v$ vertices, there is an $r$ so that, for every every $\epsilon>0$ there is an $N$ and a $\delta>0$ so that whenever $\Gamma$ is a random graph on $n\geq N$ vertices chosen so that each $k$-tuple is an edge in $\Gamma$ with probability $n^{-1/r}$, chosen independently, then with probablity $\geq 1-\epsilon$, whenever $A\subseteq\Gamma$ with $\frac{hom(K,A)}{|{n\choose v}|}<\delta$, there is an $L\subseteq A$ with $|L|\leq\epsilon|\Gamma|$ so that $hom(K,A\setminus L)=0$.
\end{theorem}
There are also a number of closely related results on sparse sets of integers \cite{MR2415379,MR2279549,ziegler08}.

Our main result is another proof of this theorem.  We have not attempted to extract explicit bounds (either on the dependence of $r$ on $K$ or the dependence of $N,\delta$ on $\epsilon$); since limiting arguments of the kind we used here do not lend themselves to extracting optimal bounds, we expect they would be somewhat worse than those given by the combinatorial proofs cited above.

As we will discuss in more detail below, it is by now well-known that hypergraph removal and regularity lemmas are closely related to the structure of $\sigma$-algebras in the structures obtained by taking limits of finite hypergraphs.  Our main goal in this paper is to extend this relationship to the sparse setting.  Towards this end, our proof uses a new notion of pseudorandomness (``correctly counting copies'') which is essentially an approximate version of Fubini's theorem, and precisely guarantees that our limiting structures satisfy Fubini's theorem for certain integrals.

\subsection{The Analytic Approach}

Our arguments in this paper draw on recent developments in analytic approaches to graph theory.  Probably the most widely studied approach is the method of \emph{graph limits} and \emph{graphons} introduced by Lov\'asz and coauthors \cite{borgs:MR2455626,lovasz:MR2306658,lovasz:MR2274085}.  Related approaches go back to work on exchangeable sequences of random variables beginning with Aldous and Hoover \cite{diaconis:MR2463439,aldous:MR637937,hoover:arrays,kallenberg:MR2161313}, and more recently, similar methods have been studied by Hrushovski, Tao, and others \cite{austin:MR2666763,hrushovski,goldbring:_approx_logic_measure}.  Analytic proofs of regularity and removal lemmas have been given using all these methods \cite{elek07,MR2964622,tao07,tao:MR2259060,towsner:modeltheoretic}.  These techniques obtain a correspondence between a sequence of arbitrarily large finite graphs on the one hand, and some sort of infinitary structure on the other.  Statements about density fit naturally in these frameworks since the normalized counting measure on large finite graphs corresponds to an ordinary measure on the infinitary structure.

In this paper, we describe a similar correspondence which applies to sub-hypergraphs of sparse, pseudorandom hypergraphs.  In the finite setting, the natural replacement for the normalized counting measure is the counting measure normalized by the ambient hypergraph.  This introduces new complications in the infinitary world: we end up with a natural measure on sets of $k$-tuples which is not a genuine product measure.  (This perspective on the problem was suggested to us by Hrushovski.)  In place of a single measure, we end up with a family of measures, and the pseudorandomness from the finitary setting is used to ensure that this family of measures obeys certain compatibility properties.

We use this method to give an analytic proof of sparse hypergraph removal.  Our approach to hypergraph removal depends heavily on the use of the Gowers uniformity (semi)norms \cite{MR1844079}.  As Conlon and Gowers point out \cite{conlon:2011}, such an approach cannot hope to give optimal bounds, and, relatedly, depends on a stronger notion of pseudorandomness than strictly needed.  We stick to this method both because we believe these norms are interesting in their own right, and because we believe it illustrates the analytic approach to sparse hypergraphs more clearly than an attempt to derive optimal bounds would.

Because of the analytic nature of our proof, the pseudorandomness property we need says that $\Gamma$ should have ``measure-theoretic'' properties which resemble those of the complete hypergraph.  Whenever $(V,E)$ is a small hypergraph, we write $\Gamma^V_E=hom((V,E),\Gamma)$ for the collection of all homomorphic copies of $(V,E)$ in $\Gamma$.  (This and the other notation in this paragraph will be introduced more rigorously in Sections \ref{notation} and \ref{families}.)  There is a natural way to choose a random element of $\Gamma^V_E$---uniformly---corresponding to a natural measure on $\Gamma^V_E$---the normalized counting measure.  However, we often obtain copies of $V$ by partitioning $V=V_0\cup V_1$, first choosing a copy $x_{V_0}\in\Gamma^{V_0}_{E}$, and then later choosing an extension $x_V$ such that $x_V\upharpoonright V_0=x_{V_0}$.  It will be convenient to have a notation for this: $\Gamma^{V_1}_{E,x_{V_0}}$ is the set of $x_{V_1}$ such that $x_{V_1}\cup x_{V_0}\in\Gamma^V_{E}$.  This leads to a second approach to choosing random elements of $\Gamma^V_E$: first choose $x_{V_0}\in \Gamma^{V_0}_{E}$ uniformly, then choose $x_{V_1}\in\Gamma^{V_1}_{E,x_{V_0}}$ uniformly.  There is a measure on $\Gamma^V_E$ corresponding to this approach, and in general, when $\Gamma$ is sparse, these measures might disagree.  (We give an example in Section \ref{pseudorandomness}.)

When $\Gamma$ is sufficiently pseudorandom, however, these measures agree, and this is essentially the property we need.  More precisely, we need to consider choosing hypergraphs by a three step process: that whenever $(V\cup W,E)$ is a small hypergraph, for most $a_W$ (according to the normalized counting measure on $\Gamma^W_{E}$), all possible partitions $V=V_0\cup V_1$ induce the same measure on $\Gamma^{V}_{E,a_W}$.  Roughly speaking, we want to say that for almost every $a_W\in\Gamma^W_E$, for almost every $x_{V_0}\in\Gamma^V_{E,a_W}$,
\[|\Gamma^{V_1}_{E,x_{V_0}\cup a_W}|\approx\frac{|\Gamma^V_{E,a_W}|}{|\Gamma^{V_0}_{E,a_W}|}.\]
We define this notion, that \emph{$(G,\Gamma)$ $\delta$-consistently counts copies of $(U,E)$}, precisely in Section \ref{pseudorandomness}.

With this we can state our main theorem:
\begin{theorem}\label{thm:main}
  For every $k$-uniform hypergraph $K$ on vertices $V$ and every constant $\epsilon>0$, there are $\delta,\zeta$ so that whenever $\Gamma$ is a $\zeta,|K|2^{2k}$-ccc $k$-uniform hypergraph and $A\subseteq \Gamma$ with $\frac{hom(K,A)}{|\Gamma^V_K|}<\delta$, there is a subset $L$ of $A$ with $|L|\leq\epsilon|\Gamma|$ such that $hom(K,A\setminus L)=0$.
\end{theorem}
Together with Theorem \ref{thm:ccc}, which states that when $\Gamma$ will be $\zeta,|K|2^{2k}$-ccc with high probability when chosen randomly and sufficiently densely, these give a proof of Theorem \ref{thm:random_removal}.

In \cite{goldbring:_approx_logic_measure}, Isaac Goldbring and the author proposed a general framework for handling analytic arguments of the sort in this paper, which we called \emph{approximate measure logic}.  In this paper, there is no assumption that the reader is familiar with that particular framework, but we pass quickly over the logical preliminaries, and refer the reader to that paper for more detailed exposition.

\subsection{Outline and Comparison to \textit{On the Triangle Removal Lemma for Subgraphs of Sparse Pseudorandom Graphs} \cite{MR2815608}}

Section \ref{families} introduces our general notation for families of measures concentrating on configuations that depend on the ambient graph $\Gamma$, since virtually all subsequent work requires being able to discuss these notions.

Section \ref{sigma_algebras} introduces the $\sigma$-algebras $\mathcal{B}_{V,\mathcal{I}}$ which play a central role in our proof.  In the standard proof of triangle removal in the dense setting, one uses graph regularity to produce a partition into components which have a certain pseudorandomness property, sometimes called DISC.  In our setting, the analog of hypergraph regularity is the observation that we may decompose a function as
\[f=\mathbb{E}(f\mid\mathcal{B}_{V,\mathcal{I}})+(f-\mathbb{E}(f\mid\mathcal{B}_{V,\mathcal{I}})),\]
and our analog of DISC is the randomness property satisfied by the second term $g=f-\mathbb{E}(f\mid\mathcal{B}_{V,\mathcal{I}})$, that $||\mathbb{E}(g\mid\mathcal{B}_{V,\mathcal{I}})||_{L^2(\mu^V_E)}=0$.  (See also \cite{MR3583029} for more about the connection between these $\sigma$-algebras and hypergraph regularity.)

In Section \ref{hypergraph_removal}, we carry out the first step of our proof.  We will say that a probability measure \emph{has regularity} if, roughly speaking, the randomness property DISC implies a somewhat stronger randomness notion---we refer to this as ``having regularity'' because it implies that the partition given by the regularity lemma actually has the properties we expect it to have.  The main result of that section, Theorem \ref{counting}, is closely analogous the counting lemma that appears in most proofs of hypergraph removal, and also to \cite[Lemma 11]{MR2815608}.  It completes the proof of hypergraph removal subject to the requirement that our measures have regularity.  We conclude the section by noting that the measures needed in the dense setting do have regularity, giving a proof of hypergraph removal in the dense setting.

We then turn to establishing the infinitary setting needed to complete our proof.  In Section \ref{pseudorandomness} we define the pseudorandomness notion satisfied by our ambient graph $\Gamma$ and prove that it is satisfied with high probability by a randomly chosen $\Gamma$ of sufficient density.  In Section \ref{model_theory}, we actually construct the infinitary setting we need and prove the necessary results to transfer results from this setting back to the finitary world; to the extent possible, all uses of model theory are kept to this section.  The reader who is willing to take for granted that the infinitary setting has the properties we need can ignore these two sections; the reader who is primarily interested in how we construct an infinitary setting can read just Section 3 and these two sections.

The proof in \cite{MR2815608} considers another pseudorandomness notion, PAIR.  In Section \ref{seminorms} we introduce our analog of PAIR, the generalized Gowers uniformity seminorms \cite{MR2373376,MR2920990} $||\cdot||_{U^{V,\mathcal{J}}_\infty(\mu^V_E)}$.  For technical reasons, we introduce a smaller family of seminorms in Subsection \ref{sec:principal_seminorms} and prove some basic properties, then introduce the full family in Subsection \ref{sec:nonprincipal_seminorms}.

It is well-known \cite{MR1054011} that PAIR and DISC are equivalent in the dense setting, and the key technical step of \cite{MR2815608} is \cite[Lemma 9a]{MR2815608}, showing that DISC implies PAIR in the sparse setting.  In our terminology, this becomes showing that $||\mathbb{E}(f\mid\mathcal{B}_{V,\mathcal{I}})||_{L^2(\mu^V_E)}=0$ implies $||f||_{U^{V,\mathcal{I}^\bot}_\infty(\mu^V_E)}=0$, which we refer to as the seminorm $||\cdot||_{U^{V,\mathcal{I}^\bot}_\infty(\mu^V_E)}$ being \emph{characteristic}.  We finally turn to this in Subsection \ref{sec:nonprincipal_characteristic} and Section \ref{random_measures}, where we prove this for successively more general classes of measures in several steps.

\subsection{Funding}

This work was supported by the National Science Foundation of the United States [DMS-1157580].

\subsection{Acknowledgements}

We are grateful to Ehud Hrushovski for providing the crucial motivating idea, and to Isaac Goldbring for many helpful discussions on this topic.

\section{Notation}\label{notation}

Throughout this paper we use a slightly unconventional notation for tuples which is particularly conducive to our arguments.  When $V$ is a finite set, a \emph{$V$-tuple from $G$} is a function $x_V:V\rightarrow G$.  If for each $v\in V$ we have designated an element $x_v\in G$, we write $x_V$ for the tuple $x_V(v)=x_v$.  Conversely, if we have specified a $V$-tuple $x_V$, we often write $x_v$ for $x_V(v)$.  When $V,W$ are disjoint sets, we write $x_V\cup x_W$ for the corresponding $V\cup W$-tuple.  (We will always assume $V$ and $W$ are disjoint when discussing $V\cup W$-tuples.)  When $I\subseteq V$ and $x_V$ is a given $V$-tuple, we write $x_I$ for the corresponding $I$-tuple: $x_I(i)=x_V(i)$ for $i\in I$.  We write $0^V$ for the tuple which is constantly equal to $0$.  (This is the only constant tuple we will explicitly refer to.)  When $B\subseteq M^{W\cup V}$, we will write $B(a_W)$ for the slice $\{x_V\mid a_W\cup x_V\in B\}$ corresponding to those coordinates.

\section{Families of Measures}\label{families}

To motivate our construction, we first consider the situation in large finite graphs.  Suppose we have a large finite set of vertices $G$ and a sparse random graph $\Gamma$ on $G$.  There are two natural measures we might consider on subsets of $G^2$: the usual normalized counting measure
\[\lambda(S)=\frac{|S|}{|G|^2}\]
and the counting measure normalized by $\Gamma$:
\[\lambda'(S)=\frac{|S\cap\Gamma|}{|\Gamma|}.\]
When we consider subsets of $G^3$, we have even more choices; we could normalize with respect to all possible triangles
\[\lambda_0(S)=\frac{|S|}{|G|^3},\]
or only those triangles entirely in $\Gamma$
\[\lambda_1(S)=\frac{|\{(x,y,z)\in S\mid (x,y)\in\Gamma, (x,z)\in\Gamma, (y,z)\in\Gamma\}|}{|\{(x,y,z)\mid(x,y)\in\Gamma, (x,z)\in\Gamma, (y,z)\in\Gamma\}|},\]
or only those triangles where certain specified edges belong to $\Gamma$:
\[\lambda_2(S)=\frac{|\{(x,y,z)\in S\mid (x,y)\in\Gamma, (x,z)\in\Gamma\}|}{|\{(x,y,z)\mid(x,y)\in\Gamma, (x,z)\in\Gamma\}|}.\]
Indeed, further consideration suggests that we have multiple choices for measures even on subsets of $G$: in addition to the normalized counting measure, we could fix any element $x\in G$ and define
\[\lambda_x(S)=\frac{|\{y\in S\mid (x,y)\in\Gamma\}|}{|\{y\mid (x,y)\in\Gamma\}|}.\]
When $\Gamma$ is a $k$-uniform hypergraph with $k>2$, we have yet more possibilities.

We therefore introduce a general notation for referring to all such measures.  We first describe this notation in the setting of a large finite graph, but we will primarily use it in the infinitary setting.  We assume that a value for $k$ and a $k$-uniform hypergraph $\Gamma$ on a set of vertices $G$ have been fixed.  When $V$ and $W$ are disjoint sets, $x_W\in G^W$, and $E$ is a $k$-uniform hypergraph on $V\cup W$, we define
\[\Gamma^V_{E,x_W}=\{x_V\in G^V\mid \forall e\in E\ x_e\in\Gamma\}.\]
Note the significance of our notation for tuples here: $x_e$ is a $k$-tuple which may consist both of elements from the fixed set $x_W$ and from $x_V$.  That is, $\Gamma^V_{E,x_W}$ is the collection of $x_V\in G^V$ such that map $x_{V\cup W}:V\cup W\rightarrow G$ is a homomorphism from $(V\cup W,E)$ to $\Gamma$.  For instance, in the case where $k=2$, so $\Gamma$ is a graph, $\Gamma^{\{1,2\}}_{\{(1,2)\},\emptyset}=\Gamma$, while $\Gamma^{\{1,2\}}_{\emptyset,\emptyset}=G^2$.

We then define
\[\mu^V_{E,x_W}(S)=\left\{\begin{array}{ll}
\frac{|S\cap\Gamma^V_{E,x_W}|}{|\Gamma^V_{E,x_W}|}&\text{if }|\Gamma^V_{E,x_W}|>0\\
0&\text{if }|\Gamma^V_{E,x_W}|=0
\end{array}\right..\]
For instance, in the measures above, $\lambda=\mu^{\{1,2\}}_{\emptyset,\emptyset}, \lambda'=\mu^{\{1,2\}}_{\{(1,2)\},\emptyset}, \lambda_0=\mu^{\{1,2,3\}}_{\emptyset,\emptyset}, \lambda_1=\mu^{\{1,2,3\}}_{\{(1,2),(1,3),(2,3)\},\emptyset}, \lambda_2=\mu^{\{1,2,3\}}_{\{(1,2),(1,3)\},\emptyset}$, and $\lambda_x=\mu^{\{1\}}_{\{(1,2)\},x}$.

In probabilistic terms, $\mu^V_{E,\emptyset}(S)$ is the probability that, if we choose a homomorphic copy of $E$ in $\Gamma$ uniformly at random, we obtain a copy belonging to $S$.  More generally $\mu^V_{E,x_W}(S)$ is the probability that if we extend $x_W$ uniformly at random to a copy of $E$ in $\Gamma$ that we obtain a copy belonging to $S$.

When $W$ and $x_W$ are clear from context, we just write $\mu^V_{E}$ for $\mu^V_{E,x_W}$ and $\Gamma^V_E$ for $\Gamma^V_{E,x_W}$, and call $x_W$ the \emph{background parameters} of $\mu^V_E$.

When integrating over $\mu^V_{E,x_W}$, we always assume the variable being integrated is $x_V$.

A key feature of this notation is that it makes it easy to specify the Fubini-type properties that we will eventually arrange for our measures to satisfy.  If $V=V_0\cup V_1$ where $V_0\cap V_1=\emptyset$ and $E'$ is the restriction of $E$ to vertices from $V_0\cup W$, we intend to have
\[\int\cdot\, d\mu^V_{E,x_W}=\iint\cdot\, d\mu^{V_1}_{E,x_{V_0}\cup x_W}d\mu^{V_0}_{E',x_W}.\]
To avoid having to endlessly specify the restriction of $E$ to the appropriate vertices, we will generally allow $E$ to have extra edges not included in the vertex set $V$; for instance, we will not distinguish between $\mu^{V_0}_{E',x_W}$ and $\mu^{V_0}_{E,x_W}$, and will usually write
\[\int\cdot\, d\mu^V_{E,x_W}(x_V)=\iint\cdot\, d\mu^{V_1}_{E,x_{V_0}\cup x_W}d\mu^{V_0}_{E,x_W},\]
even though $E$ is not a subset of ${{V_0\cup W}\choose k}$.

\section{\texorpdfstring{$\sigma$}{sigma}-Algebras}\label{sigma_algebras}

Most of our work will be carried out in the setting of uncountable hypergraphs with probability measures.  We face the following difficulty, even in the graph case: suppose we are working with the graph $(M,E)$ and have a $\sigma$-algebra $\mathcal{B}_1$ on $M$.  Then $E\subseteq M^2$, but it need not be the case that $E$ is measurable with respect to the product algebra $\mathcal{B}_1\times\mathcal{B}_1$.  Since $\mathcal{B}_1\times\mathcal{B}_1$ is generated by rectangles, measurability of $E$ with respect to $\mathcal{B}_1\times\mathcal{B}_1$ is actually a strong \emph{combinatorial} requirement on $E$---indeed, as we will see below, or as noted in \cite{goldbring:_approx_logic_measure}, closely related to the properties of regularity partitions for $E$.

Our solution draws from Keisler's notion of a graded probability space \cite{MR0491155}: we need to work with $\sigma$-algebras $\mathcal{B}_n$ on $n$-tuples for every $n$ so that $\mathcal{B}_m\times\mathcal{B}_n\subseteq\mathcal{B}_{m+n}$, but we allow $\mathcal{B}_{m+n}$ to contain additional measurable sets beyond those required by the product.  To better match our tuple notation, we will actually work with $\sigma$-algebras on $V$-tuples for all finite $V$ (though we will ultimately define them to depend only on $|V|$, and not on the particular elements of the set $V$).  

We need certain sub-$\sigma$-algebras giving those sets measurable in certain well-defined ways.  For instance, we wish to define generalizations of product algebras like $\mathcal{B}_1\times\mathcal{B}_1$.

\begin{definition}
\label{def:bnk_algebra}

Suppose that for every finite set of indices $V$ we have a Boolean algebra $\mathcal{B}^0_V$ on subsets of $M^V$ such that:
\begin{itemize}
\item $\emptyset\in\mathcal{B}^0_V$ and $M^V\in\mathcal{B}^0_V$,
\item $\mathcal{B}^0_V\times\mathcal{B}^0_W\subseteq\mathcal{B}^0_{V\cup W}$,
\item Whenever $a_W\in M^W$ and $B\in\mathcal{B}^0_{V\cup W}$, the projection $B(a_W)\in M^V$.
\end{itemize}

For $I\subseteq V$, we define $\mathcal{B}^0_{V,I}$ to be the Boolean algebra generated by subsets of $M^n$ of the form
\[\{x_V\in M^V\mid x_I\in B\}\]
where $B\in\mathcal{B}^0_I$.

If $\mathcal{I}\subseteq \mathcal{P}(V)$ then we write $\mathcal{B}^0_{V,\mathcal{I}}$ for the Boolean algebra generated by $\bigcup_{I\in\mathcal{I}}\mathcal{B}^0_{V,\mathcal{I}}$.  When $k\leq|V|$, we define $\mathcal{B}^0_{V,k}$ to be the Boolean algebra $\mathcal{B}^0_{V,\{I\subseteq V\mid |I|=k\}}$.  

For any $I\subseteq V$, we write ${<}I$ for the set of proper subsets of $I$.  The \emph{principal} algebras are those of the form $\mathcal{B}^0_{V,{<}V}=\mathcal{B}^0_{V,|V|-1}$.

In all cases, we drop the superscript $^0$ to indicate the $\sigma$-algebra generated by the algebra.
\end{definition}

Throughout this paper our primary example of such a system of algebras will be the for $\mathcal{B}^0_V$ to be the collection fo sets of $V$-tuples definable in a model $\mathfrak{M}$ using parameters.

The algebras $\mathcal{B}^0_{V,\mathcal{I}}$ are generally uncountable, and so the corresponding $\sigma$-algebras $\mathcal{B}_{V,\mathcal{I}}$ are generally non-separable.  (It is possible to recover separability by allowing only formulas whose parameters come from an elementary submodel.  This causes some additional complications, since the slices of some set $A\subseteq M^2$ are no longer necessarily measurable; rather, the slices are measurable with respect to some slightly larger $\sigma$-algebra which depends on the choice of slice.  These complications can be addressed by a small amount of additional model-theoretic work; this separable approach is used in \cite{towsner:modeltheoretic,goldbring:_approx_logic_measure}.)  These $\sigma$-algebras are closely related to the Szemer\'edi Regularity Lemma; for instance, in \cite{goldbring:_approx_logic_measure} it is shown that the usual regularity lemma follows almost immediately from the existence of the projection of a set onto $\mathcal{B}_{\{1,2\},1}$.

Note that while a $\sigma$-algebra is well-defined independently of the choice of a particular measure, notions like the projection onto a $\sigma$-algebra do depend on a particular choice of measure.


The first introduction of these algebras that we know of is in \cite{tao08Norm}, where Tao already notes the relationship with the Gowers uniformity norms which we will discuss in detail below.  The work in this paper builds on further developments in \cite{towsner:MR2529651,tao07}.

There is some flexibility in the choice of the set $\mathcal{I}$; for instance, $\mathcal{B}_{\{1,2,3\},\{\{1,2\}\}}=\mathcal{B}_{\{1,2,3\},\{\{1,2\},\{1\}\}}$ (since $\{1,2\}\in\mathcal{I}$, we already have sets depending only on the coordinate $1$, so adding $\{1\}$ does nothing).  This leads to two canonical choices for $\mathcal{I}$: a minimal choice with only the sets of coordinates absolutely necessary, or a maximal choice which adds every set of coordinates allowed without changing the meaning.  Depending on the situation, we want one or the other canonical form.

\begin{lemma}
  If for every $I\in\mathcal{I}$ there is an $I'\in\mathcal{I}'$ with $I\subseteq I'$ then $\mathcal{B}_{V,\mathcal{I}}\subseteq\mathcal{B}_{V,\mathcal{I}'}$.
\end{lemma}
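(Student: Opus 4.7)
The plan is to reduce everything to checking the claim on generators and then invoke the fact that the $\sigma$-algebra operation is monotone with respect to inclusions of Boolean algebras. So I would first show the corresponding statement for the $^0$-algebras, namely $\mathcal{B}^0_{V,\mathcal{I}}\subseteq\mathcal{B}^0_{V,\mathcal{I}'}$, and then take $\sigma$-closures.

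The core step is the following observation: if $I\subseteq I'\subseteq V$ and $B\in\mathcal{B}^0_I$, then the cylinder $C:=\{x_V\mid x_I\in B\}$ is already a generator of $\mathcal{B}^0_{V,I'}$. Indeed, a set in $\mathcal{B}^0_I$ is a Boolean combination of sets of the form $\{x_I\mid \mathfrak{M}\vDash\varphi(x_I)\}$ where $\varphi$ has free variables among $w_I$. Viewing $\varphi$ as a formula in the larger variable tuple $w_{I'}$ (which trivially does not depend on the extra coordinates) exhibits $B':=\{x_{I'}\mid x_I\in B\}$ as an element of $\mathcal{B}^0_{I'}$. Since $C=\{x_V\mid x_{I'}\in B'\}$, it belongs to $\mathcal{B}^0_{V,I'}$, and hence to $\mathcal{B}^0_{V,\mathcal{I}'}$ because $I'\in\mathcal{I}'$ by hypothesis.

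Now given $I\in\mathcal{I}$, choose $I'\in\mathcal{I}'$ with $I\subseteq I'$ (using the hypothesis); by the previous paragraph every generator of $\mathcal{B}^0_{V,I}$ lies in $\mathcal{B}^0_{V,\mathcal{I}'}$. Taking unions over $I\in\mathcal{I}$ and then the Boolean algebra they generate gives $\mathcal{B}^0_{V,\mathcal{I}}\subseteq\mathcal{B}^0_{V,\mathcal{I}'}$. Finally, since $\mathcal{B}_{V,\mathcal{I}'}$ is a $\sigma$-algebra containing $\mathcal{B}^0_{V,\mathcal{I}}$, it must contain the $\sigma$-algebra generated by the latter, namely $\mathcal{B}_{V,\mathcal{I}}$.

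There is no real obstacle here; the only thing to be careful about is that ``definable from parameters'' is preserved under the lifting $\varphi(w_I)\mapsto\varphi(w_I)$ interpreted in $M^{I'}$, which is immediate because the parameters are drawn from the same model $\mathfrak{M}$ and the free variables of $\varphi$ (a subset of $w_I$) are still free when viewed inside $w_{I'}$.
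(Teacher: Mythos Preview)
Your argument is correct and follows exactly the same route as the paper: reduce to showing $\mathcal{B}^0_{V,I}\subseteq\mathcal{B}^0_{V,I'}$ whenever $I\subseteq I'$, and observe that a formula with free variables among $w_I$ is already a formula with free variables among $w_{I'}$. The only difference is that you spell out the passage to the generated Boolean algebras and $\sigma$-algebras, which the paper leaves implicit.
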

\begin{proof}
  It suffices to show that if $I\subseteq I'$ then $\mathcal{B}^0_{V,I}\subseteq\mathcal{B}^0_{V,I'}$.  But this is easily seen from the definition, since if $B\in\mathcal{B}^0_{I}$, $B\times M^{I'\setminus I}\in\mathcal{B}^0_{I'}$, and therefore
\[\{x_V\in M^V\mid x_I\in B\}=\{x_V\in M^V\mid x_{I'}\in B\times M^{I'\setminus I}\}\in\mathcal{B}^0_{V,I'}.\]
\end{proof}

\begin{cor}
  For any $V,\mathcal{I}$, there exist $\mathcal{I}_0,\mathcal{I}_1$ such that:
  \begin{enumerate}
  \item $\mathcal{B}_{V,\mathcal{I}}=\mathcal{B}_{V,\mathcal{I}_0}=\mathcal{B}_{V,\mathcal{I}_1}$,
  \item $\mathcal{I}_0$ is downwards closed: if $I\in\mathcal{I}_0$ and $J\subseteq I$ then $J\in\mathcal{I}_0$,
  \item If $I,J\in\mathcal{I}_1$ then $J\not\subseteq I$.
  \end{enumerate}
\end{cor}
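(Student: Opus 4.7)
The plan is to construct $\mathcal{I}_0$ and $\mathcal{I}_1$ by the two obvious operations on a family of subsets of the finite set $V$, namely downward-closure and passing to maximal elements, and then apply the preceding lemma in both directions to conclude the $\sigma$-algebras agree.

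For $\mathcal{I}_0$, I would take $\mathcal{I}_0 := \{J \subseteq V : J \subseteq I \text{ for some } I \in \mathcal{I}\}$. This is downward-closed by construction, settling (2). For (1), the preceding lemma applies in both directions: every $I \in \mathcal{I}$ lies in $\mathcal{I}_0$ (take $I' = I$), giving $\mathcal{B}_{V,\mathcal{I}} \subseteq \mathcal{B}_{V,\mathcal{I}_0}$; and every $J \in \mathcal{I}_0$ is by definition contained in some $I \in \mathcal{I}$, giving the reverse inclusion $\mathcal{B}_{V,\mathcal{I}_0} \subseteq \mathcal{B}_{V,\mathcal{I}}$.

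For $\mathcal{I}_1$, I would let $\mathcal{I}_1$ be the set of $\subseteq$-maximal elements of $\mathcal{I}$. Since $V$ is finite, $\mathcal{I} \subseteq \mathcal{P}(V)$ is a finite poset, so every element of $\mathcal{I}$ is dominated by some element of $\mathcal{I}_1$. By definition $\mathcal{I}_1$ is an antichain, giving (3). For (1), we again apply the preceding lemma both ways: $\mathcal{I}_1 \subseteq \mathcal{I}$ immediately yields $\mathcal{B}_{V,\mathcal{I}_1} \subseteq \mathcal{B}_{V,\mathcal{I}}$, while the fact that each $I \in \mathcal{I}$ is contained in some $I' \in \mathcal{I}_1$ yields $\mathcal{B}_{V,\mathcal{I}} \subseteq \mathcal{B}_{V,\mathcal{I}_1}$.

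There is no real obstacle; the whole argument is a direct bookkeeping exercise on top of the preceding lemma, with finiteness of $V$ being the only nontrivial ingredient (used to guarantee that maximal elements exist in $\mathcal{I}_1$). The two constructions are independent, and in fact $\mathcal{I}_1$ is precisely the set of maximal elements of $\mathcal{I}_0$, which is one reasonable way to organize the write-up.
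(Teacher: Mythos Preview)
Your argument is correct and is exactly the intended one: the paper leaves this corollary without proof, treating it as immediate from the preceding lemma, and your construction of $\mathcal{I}_0$ as the downward closure and $\mathcal{I}_1$ as the set of maximal elements is the obvious (and only reasonable) way to read it.
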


\begin{definition}
  Given $\mathcal{I},\mathcal{J}\subseteq\mathcal{P}(V)$, we define $\mathcal{I}\wedge\mathcal{J}$ to consist of those sets $K$ such that there is an $I\in\mathcal{I}$ and a $J\in\mathcal{J}$ such that $K\subseteq I\cap J$.
\end{definition}
We often equate $J$ with $\{J\}$, so $\mathcal{I}\wedge J=\mathcal{I}\wedge\{J\}$.

\section{Hypergraph Removal}\label{hypergraph_removal}

In this section, we present a proof of the ordinary hypergraph removal theorem, essentially the one given in \cite{towsner:modeltheoretic}, which is in turn based on the arguments in \cite{tao07,tao:MR2259060}.  We first state a necessary property on measures, and prove a lemma reminiscent of the hypergraph counting lemma.

\begin{definition}
  Let $\nu^V$ be a probability measure on $\mathcal{B}_{V}$.  We say $\nu^V$ \emph{has $J$-regularity} for $J\subseteq V$ if for any $\mathcal{I}\subseteq\mathcal{P}(V)$:
  \begin{quote}
    For any $\{f_I\}_{I\in\mathcal{I}}$ and $g\in L^\infty(\mathcal{B}_{V,J})$ such that for each $I\in\mathcal{I}$, $f_I\in L^\infty(\mathcal{B}_{V,I})$,
\[\int (g-\mathbb{E}(g\mid\mathcal{B}_{V,\mathcal{I}\wedge J}))\prod_{I\in\mathcal{I}}f_I\, d\nu^V=0.\]
  \end{quote}
\end{definition}

Note that if there is any $I\in\mathcal{I}$ with $J\subseteq I$, so $J\in\mathcal{I}\wedge J$, then when $g\in L^\infty(\mathcal{B}_{V,J})$, we have $g=\mathbb{E}(g\mid\mathcal{B}_{V,\mathcal{I}\wedge J})$, and therefore the statement is trivial.  So we are usually concerned with the case where $J\not\subseteq I$ for all $I\in\mathcal{I}$.

Given a measure $\nu^V$ on $\mathcal{B}_V$, for any $J\subseteq V$ there is a natural measure $\nu^J$ on $\mathcal{B}_J$: there is a canonical embedding of $\mathcal{B}_J$ as $\mathcal{B}_{V,J}$, and we take $\nu^J(B)=\nu^V(J\times X^{V\setminus J})$.  A basic property we expect of $\nu^V$ is that when $B\in\mathcal{B}_J$ and $C\in\mathcal{B}_{V\setminus J}$ then $\nu^V(B\times C)=\nu^J(B)\nu^{V\setminus J}(C)$.  We say $\nu^V$ \emph{extends the product} $\nu^J\times\nu^{V\setminus J}$ in this case.
\begin{lemma}
If $\nu^V$ has $J$-regularity then $\nu^V$ extends the product $\nu^J\times\nu^{V\setminus J}$.  When $|J|=1$ and $\nu^V$ extends the product $\nu^J\times\nu^{V\setminus J}$, $\nu^V$ has $J$-regularity.
\end{lemma}
\begin{proof}
Let $\nu^V$ with $J$-regularity be given.  It suffices to show that for any $B\in\mathcal{B}_{V,J}$ and $C\in\mathcal{B}_{V,V\setminus J}$, $\nu^V(B\times C)=\nu^j(B)\nu^{V\setminus J}(C)$.  Let $I=V\setminus J$, $\mathcal{I}=\{I\}$, and take any such $B$ and $C$.  Note that $\mathcal{B}_{V,\mathcal{I}\wedge J}=\mathcal{B}_{V,\emptyset}$, which is the trivial $\sigma$-algebra.  In particular, for any $g$, $\mathbb{E}(g\mid\mathcal{B}_{V,\emptyset})$ is the function constantly equal to $\int gd\nu^V$.

By $J$-regularity,
\begin{align*}
  0
&=\int (\chi_B-\mathbb{E}(\chi_B\mid\mathcal{B}_{V,\mathcal{I}\wedge\{j\}}))\chi_Cd\nu^V\\
&=\int \chi_B\chi_C d\nu^V-\int\mathbb{E}(\chi_B\mid\mathcal{B}_{V,\mathcal{I}\wedge\{j\}})\chi_Cd\nu^V\\
&=\int \chi_B\chi_C d\nu^V-\int \nu^j(B)\chi_Cd\nu^V\\
&=\int \chi_B\chi_C d\nu^V-\nu^j(B)\nu^{V\setminus\{j\}}(C).
\end{align*}

Suppose $J=\{j\}$ and $\nu^V$ extends $\nu^J\times\nu^{V\setminus J}$.  To show $J$-regularity, consider some $\mathcal{I}\subseteq\mathcal{P}(V)$ so that for each $I\in\mathcal{I}$, $I\cap J\subsetneq J$---that is, $j\not\in I$.  If for each $I\in\mathcal{I}$ we have $f_I\in L^\infty(\mathcal{B}_{V,I})$ then we have $\prod_I f_I\in L^\infty(\mathcal{B}_{V,V\setminus J})$.  Then for any $g\in L^\infty(\mathcal{B}_{V,\{j\}})$ we have
\begin{align*}
  \int (g-\mathbb{E}(g\mid\mathcal{B}_{V,\mathcal{I}\wedge J}))\prod_{I\in\mathcal{I}}f_I\, d\nu^V
&=\int (g-\mathbb{E}(g\mid\mathcal{B}_{V,\mathcal{I}\wedge J}))d\nu^{j}\int\prod_{I\in\mathcal{I}}f_I\, d\nu^{V\setminus \{j\}}\\
&=0\cdot\int\prod_{I\in\mathcal{I}}f_I\, d\nu^{V\setminus \{j\}}\\
&=0.
\end{align*}

\end{proof}

The next theorem is an infinitary analog of hypergraph removal.  
\begin{theorem}\label{counting}
Suppose $\nu^V$ has $J$-regularity for all $J\subseteq V$ with $|J|\leq k$, that $k< |V|$, $\mathcal{I}\subseteq{V\choose k}\cup\{V\}$, and for each $I\in\mathcal{I}$ we have a set $A_I\in\mathcal{B}_{I}$ such that $A_V\in\mathcal{B}_{V,<k}$.  Further, suppose there is a $\delta>0$ such that whenever $B_I\in\mathcal{B}_{I}^0$, $\nu^I(A_I\setminus B_I)<\delta$ for all $I\in\mathcal{I}$, and $B_V\in\mathcal{B}^0_{V,<k}$, $\bigcap_{I\in\mathcal{I}} B_I$ is non-empty.  Then $\nu^V(\bigcap_{I\in\mathcal{I}}A_I)>0$.
\end{theorem}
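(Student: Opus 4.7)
The plan is to argue by contrapositive: assume $\nu^V(\bigcap_I A_I) = 0$ and exhibit $B_I \in \mathcal{B}^0_{V,I}$ with $\nu^V(A_I \setminus B_I) < \delta$ for every $I \in \mathcal{I}$ yet $\bigcap_I B_I = \emptyset$, contradicting the hypothesis.

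First I would iteratively apply $I$-regularity for each $I \in \mathcal{I}$ to derive
\[
\nu^V\!\left(\bigcap_{I \in \mathcal{I}} A_I\right) \;=\; \int \prod_{I \in \mathcal{I}} h_I \, d\nu^V,
\]
where each $h_I \ge 0$ is measurable with respect to the principal algebra $\mathcal{B}_{V, <I}$. The basic move is $I$-regularity applied with $g = \chi_{A_I}$ and $f_{I'} = \chi_{A_{I'}}$ for $I' \ne I$; the hypothesis $I' \cap I \subsetneq I'$ holds because $|I'| = k = |I|$ and $I' \ne I$, so $\chi_{A_I}$ may be replaced by $\mathbb{E}(\chi_{A_I}\mid \mathcal{B}_{V,(\mathcal{I}\setminus\{I\})\wedge I})$, a function measurable with respect to $\mathcal{B}_{V,<I}$. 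Iterating across all $I \in \mathcal{I}$ requires care: after a replacement, the integrand carries a $\sigma$-algebra-measurable factor rather than a product of $\mathcal{B}_{V,K}^0$-factors, so I would approximate it in $L^1$ by a simple function in the generating Boolean algebra, expand that by inclusion-exclusion into a signed sum of products of indicators of definable sets, and apply the next regularity to each summand; any factor whose coordinate set $K$ is contained in the next index $I$ is absorbed into $g = \chi_{A_{I}}$, staying in $L^\infty(\mathcal{B}_{V,I})$, so that the remaining $f$-factors satisfy the required $K \cap I \subsetneq K$ condition.

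Since each $h_I \ge 0$ and $\int \prod_I h_I\,d\nu^V = 0$ by the contradictory assumption, $\prod_I h_I$ vanishes $\nu^V$-a.e., so the sets $E_I := \{h_I = 0\} \in \mathcal{B}_{V,<I}$ satisfy $\nu^V(\bigcup_I E_I) = 1$; the defining property of conditional expectation then forces $\nu^V(A_I \cap E_I) = 0$ for each $I$, so $A_I$ lives on $E_I^c$ up to a $\nu^V$-null set.

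To produce the $B_I$'s, I would approximate $E_I^c$ (an element of $\mathcal{B}_{V,<I} \subseteq \mathcal{B}_{V,I}$) by a set in $\mathcal{B}^0_{V,<I} \subseteq \mathcal{B}^0_{V,I}$, tightly enough that $\nu^V(A_I \setminus B_I) < \delta$, which is easy since $A_I \cap E_I$ is already $\nu^V$-null. The genuine obstacle is to make the algebra-level intersection $\bigcap_I B_I$ actually empty rather than just $\nu^V$-null, since $\bigcap_I E_I^c$ has measure zero but need not be empty in the ambient space $M^V$. This is the delicate step: a coordinated shrinkage of each $E_I^c$ inside $\mathcal{B}^0_{V,I}$, exploiting structural properties of the measure and, in the intended applications, saturation of the ambient model, must be performed so that finitely many approximants have empty common intersection. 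Once this is arranged, the constructed $B_I$'s violate the hypothesis and the theorem follows.
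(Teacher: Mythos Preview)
Your outline reaches the correct intermediate point---reducing each $A_I$ to something $\mathcal{B}_{V,<I}$-measurable---but then stops exactly where the real work is. You identify the obstacle (turning a $\nu^V$-null intersection into an \emph{empty} one) and offer no mechanism beyond a gesture toward ``saturation of the ambient model''. Saturation is not what is used, and it does not obviously help: the difficulty is not a compactness statement about a family of definable sets, it is the explicit construction of definable approximants $B_I\in\mathcal{B}^0_{V,I}$ whose intersection is empty.

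The paper resolves this by \emph{induction on $k$}, which is absent from your proposal. After the reduction to $A_I\in\mathcal{B}_{V,<I}$, each $A_I$ is approximated inside a finite subalgebra $\mathcal{B}\subseteq\mathcal{B}^0_{V,k-1}$ by a set $A^*_I$ that decomposes as a disjoint union of boxes $\bigcap_{J\in\binom{I}{k-1}}A^*_{I,i,J}$. For each choice $\vec i$ of boxes, the resulting intersection $D_{\vec i}=\bigcap_{J\in\binom{V}{k-1}}D_{\vec i,J}$ has measure zero, and now the \emph{inductive hypothesis at level $k-1$} furnishes definable $B_{\vec i,J}\in\mathcal{B}^0_{V,J}$ with $\bigcap_J B_{\vec i,J}=\emptyset$ and arbitrarily small loss. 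A combinatorial reassembly (intersecting $A^*_{I,i,J}$ with suitable unions of the $B_{\vec i,J}$ and complements of competing boxes) then produces $B^*_I\in\mathcal{B}^0_{V,I}$ with $\bigcap_I B^*_I=\emptyset$ and $\nu^V(A_I\setminus B^*_I)<\delta$. This inductive descent is the heart of the argument; nothing in your sketch replaces it.

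A secondary point: your first-stage reduction is more laborious than necessary, and the approximation/inclusion--exclusion scheme you describe is delicate to make precise (after absorbing a factor with $K\subseteq I$ into $g$, the resulting conditional expectation is no longer simply $g_K\cdot h_I$). The paper avoids all of this by replacing each $A_{I_0}$ with the \emph{set} $A'_{I_0}=\{\mathbb{E}(\chi_{A_{I_0}}\mid\mathcal{B}_{V,<I_0})>0\}\in\mathcal{B}_{V,<I_0}\subseteq\mathcal{B}_{V,I_0}$, checking that both the hypothesis on the $B_I$'s and the conclusion $\nu^V(\bigcap A_I)>0$ transfer. Since $A'_{I_0}$ is again a set in $\mathcal{B}_{V,I_0}$, the next application of $I_1$-regularity takes $f_{I_0}=\chi_{A'_{I_0}}$ directly, and no approximation is needed.
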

We usually apply this with $V\not\in\mathcal{I}$, but have to deal with with a more general term to make the induction go through.
\begin{proof}
We proceed by main induction on $k$.  When $k=1$, the claim is simple: if there is any $I_0$ with $\nu^{I_0}(A_{I_0})<\delta$, we could take $B_{I_0}=\emptyset$ and $B_I=M^I$ for $I\neq I_0$, contradicting the assumption.  So $\nu^V(\bigcap A_I)=\nu^V(\bigcap A_I)=\prod\nu^I(A_I)\geq\delta^{|V|+1}>0$ by the previous lemma.

So we assume that $k>1$ and that whenever $B_I\in\mathcal{B}^0_{I}$ and $\nu^I(A_I\setminus B_I)<\delta$ for all $I$, $\bigcap_{I\in\mathcal{I}} B_I$ is non-empty.  Throughout this proof, the variables $I$ and $I_0$ range over elements of $\mathcal{I}$.  We first show that, without loss of generality, we may assume each $A_{I}$ belongs to $\mathcal{B}_{I,{<}I}$, by showing that for each $I_0\in\mathcal{I}\setminus\{V\}$, there is some set $A'_{I_0}\in\mathcal{B}_{I_0,{<}I_0}$ with the property that, if we replace $A_{I_0}$ by $A'_{I_0}$, the assumptions of the theorem all hold, and such that if we show the conclusion for the modified family of sets, the conclusion also holds for the original family.

\begin{claim}
For any $I_0$, there is an $A'_{I_0}\in\mathcal{B}_{I_0,{<}I_0}$ such that:
\begin{itemize}
  \item whenever $B_I\in\mathcal{B}^0_{I}$ for each $I$, $\nu^I(A_I\setminus B_I)<\delta$ for each $I\neq I_0$, and $\nu^{I_0}(A'_{I_0}\setminus B_{I_0})<\delta$, $\bigcap_{I\in\mathcal{I}}B_I$ is non-empty, and
  \item if $\nu^V(A'_{I_0}\cap \bigcap_{I\neq I_0}A_I)>0$, $\nu^V(\bigcap_{I\in\mathcal{I}} A_I)>0$.
\end{itemize}
\end{claim}
\begin{claimproof}
 Define $A'_{I_0}:=\{x_{I_0}\mid \mathbb{E}(\chi_{A_{I_0}}\mid\mathcal{B}_{I_0,<I_0})(x_{I_0})>0\}$.  If $\nu^V(A'_{I_0}\cap\bigcap_{I\neq I_0}A_I)>0$ then we have
 \[\int \mathbb{E}(\chi_{A_{I_0}}\mid\mathcal{B}_{I_0,<I_0})\prod_{I\neq I_0}\chi_{A_I}\,d\nu^V>0\]
 and since $\nu^V$ has $I_0$-regularity, this implies that $\nu^V(\bigcap A_I)>0$.
 
 Suppose that for each $I$, $B_I\in\mathcal{B}^0_{I}$ with $\nu^I(A_I\setminus B_I)<\delta$ for $I\neq I_0$ and $\nu^{I_0}(A'_{I_0}\setminus B_{I_0})<\delta$.  Since
\[\nu^{I_0}(A_{I_0}\setminus A'_{I_0})=\int\chi_{A_{I_0}}(1-\chi_{A'_{I_0}})\,d\nu^{I_0}=\int \mathbb{E}(\chi_{A_{I_0}}\mid\mathcal{B}_{I_0,<I_0})(1-\chi_{A'_{I_0}})\,d\nu^{I_0}=0,\]
we have $\nu^{I_0}(A_{I_0}\setminus B_{I_0})<\delta$ as well, and therefore $\bigcap_{I\in\mathcal{I}} B_I$ is non-empty.
\end{claimproof}

Fix finitely many sets from $\mathcal{B}^0_{[1,k],k-1}$ and let $\mathcal{B}$ be the $\sigma$-algebra generated by these sets together with $\mathcal{B}^0_{[1,k],k-2}$.  By abuse of notation, we treat $\mathcal{B}$ as a sub-$\sigma$-algebra of every $\mathcal{B}_I$.  By choosing enough sets, we may ensure that for every $I$, $\|\chi_{A_I}-\mathbb{E}(\chi_{A_I}\mid\mathcal{B})\|_{L^2(\nu^I)}<\frac{\sqrt{\delta}}{\sqrt{2}(|\mathcal{I}|+2)}$ for each $I$.  For each $I$, set $A^*_I=\{a_I\mid \mathbb{E}(\chi_{A_I}\mid\mathcal{B})(a_I)>\frac{|\mathcal{I}|}{|\mathcal{I}|+1}\}$.

\begin{claim}
For each $I$, $\nu^I(A_I\setminus A^*_I)\leq \delta/2$.
\end{claim}
\begin{claimproof}
$A_I\setminus A^*_I$ is the set of points such that $\left(\chi_{A_I}-\mathbb{E}(\chi_{A_I}\mid\mathcal{B})\right)(\vec a)\geq\frac{1}{|\mathcal{I}|+1}$.  By Chebyshev's inequality, the measure of this set is at most
\[(|\mathcal{I}|+1)^2\int (\chi_{A_I}-\mathbb{E}(\chi_{A_I}\mid\mathcal{B}))^2\,d\nu^I=(|\mathcal{I}|+1)^2\|\chi_{A_I}-\mathbb{E}(\chi_{A_I}\mid\mathcal{B})\|_{L^2(\nu^I)}^2\leq\frac{\delta}{2}.\]
\end{claimproof}

\begin{claim}
$\nu^V(\bigcap_I A_I)\geq\nu^V(\bigcap_I A^*_I)/\left(|\mathcal{I}|+1\right)$. 
\end{claim}
\begin{claimproof}
 For each $I_0$, 
 \begin{align*}
 \nu^V((A^*_{I_0}\setminus A_{I_0})\cap \bigcap_{I\neq I_0} A^*_I)
 &=\int \chi_{A^*_{I_0}}(1-\chi_{A_{I_0}})\prod_{I\neq I_0}\chi_{A^*_I}\,d\nu^V\\
 &=\int\chi_{A^*_{I_0}}(1-\mathbb{E}(\chi_{A_{I_0}}\mid\mathcal{B}))\prod_{I\neq I_0}\chi_{A^*_I}\,d\nu^V\\
 &\leq \frac{1}{|\mathcal{I}|+2}\int \prod_{I\in\mathcal{I}}\chi_{A^*_I}\,d\nu^V\\
 &=\frac{1}{|\mathcal{I}|+2}\nu^V(\bigcap_{I\in\mathcal{I}} A^*_I).
\end{align*}

But then
\begin{align*}
\nu^V(\bigcap_{I\in\mathcal{I}} A^*_I\setminus (\bigcap_{I\in\mathcal{I}} A_I))
&\leq \sum_{I_0}\nu^V((A^*_{I_0}\setminus A_{I_0})\cap \bigcap_{I\neq I_0} A^*_I)\\
&\leq \frac{|\mathcal{I}|}{|\mathcal{I}|+1}\nu^V(\bigcap_{I\in\mathcal{I}} A^*_I).
\end{align*}
\end{claimproof}

Each $A^*_I$ may be written in the form $\bigcup_{i\leq r_I}A^*_{I,i}$ where $A^*_{I,i}=\bigcap_{J\in {I\choose k-1}\cup V}A^*_{I,i,J}$, $A^*_{I,i,J}$ is an element of $\mathcal{B}^0_{V,J}$, and $A^*_{I,i,V}\in\mathcal{B}_{I,k-2}$.  We may assume that if $i\neq i'$ then $A^*_{I,i}\cap A^*_{I,i'}=\emptyset$.


We have
\[\nu^V(\bigcap_I A^*_I)=\nu^V(\bigcup_{\vec i\in\prod_I [1,r_I]}\bigcap_I\bigcap_{J\in{I\choose k-1}\cup\{V\}}A^*_{I,i_I,J}).\]
For each $\vec i\in\prod_I [1,r_I]$, let $D_{\vec i}=\bigcap_I\bigcap_{J\in{I\choose k-1}\cup\{V\}}A^*_{I,i_I,J}$.  Each $A^*_{I,i_I,J}$ is an element of $\mathcal{B}_{V,J}$, so we may group the components and write $D_{\vec i}=\bigcap_{J\in{V\choose k-1}\cup \{V\}}D_{\vec i,J}$ where $D_{\vec i,V}=\bigcap_I A^*_{I,i_I,V}$ and otherwise $D_{\vec i,J}=\bigcap_{I\supset J}A^*_{I,i_I,J}$.

Suppose, for a contradiction, that $\nu^V(\bigcap_I A^*_I)=0$.  Then for every $\vec i\in\prod_I [1,r_I]$, $\nu^V(D_{\vec i})=\nu^V(\bigcap_J D_{\vec i,J})=0$.  By the contrapositive of the inductive hypothesis, for each $\gamma>0$, there is a collection $B_{\vec i,J}\in\mathcal{B}^0_{V,J}$ and $B_{\vec i,V}\in \mathcal{B}^0_{V,k-2}$ such that $\nu^V(D_{\vec i,J}\setminus B_{\vec i,J})<\gamma$ and $\bigcap_J B_{\vec i,J}=\emptyset$.  In particular, this holds with $\gamma=\frac{\delta}{6({k\choose k-1}+1)(\prod_I r_I)(\max_I r_I)}$.

For $J\neq V$ and $I\supset J$, define
\[B^*_{I,i,J}=A^*_{I,i,J}\cap\bigcap_{\vec i,i_I=i}\left[B_{\vec i,J}\cup\bigcup_{I'\neq I, I'\supset J}\overline{A^*_{I',i_{I'},J}}\right].\]
For each $I$, let $A^0_{I,i,V}$ be an element of $\mathcal{B}^0_{V,k-2}$ with $\nu^I(A^*_{I,i,V}\bigtriangleup A^0_{I,i,V})<\frac{\delta}{6(|\mathcal{I}|+\prod_I r_I+1)}$.  We analogously define
\[B^{**}_{I,i,V}=A^0_{I,i,V}\cap\bigcap_{\vec i,i_I=i}\left[B_{\vec i,V}\cup\bigcup_{I'\neq I}\overline{A^0_{I',i_{I'},V}}\right]\]
and then set
\[B^*_{I,i,V}=\{x_I\mid \nu^{V\setminus I}(\{x_{V\setminus I}\mid (x_I,x_{V\setminus I})\in B^{**}_{I,i,V})\geq \frac{1}{|\mathcal{I}|+\prod_I r_I+1}\}.\]

Finally we set
\[B^*_I=\bigcup_{i\leq r_I}\bigcap_J B^*_{I,i,J}.\]
Note that $B^*_I\in\mathcal{B}^0_I$ and $B^*_V\in\mathcal{B}^0_{V,<k}$.  Note that $B^{**}_{V,i,V}=B^*_{V,i,V}$.

\begin{claim}
$\nu^{I_0}(A^*_{I_0}\setminus B^*_{I_0})\leq\delta/2$.
\end{claim}
\begin{claimproof} 
Let $\theta=\frac{1}{|\mathcal{I}|+\prod_I r_I+1}$.  If $S\subseteq M^V$, define
\[\pi_{I_0,}(S)=\{x_{I_0}\mid \nu^{V\setminus I_0}(\{x_{V\setminus I_0}\mid x_V\in S\})\geq\theta\}.\]

We will show that
\begin{align*}
A^*_{I_0}\setminus B^*_{I_0}
&\subseteq \bigcup_{\vec i,J\subset I}(D_{\vec i,J}\setminus B_{\vec i,J})\\
&\ \ \cup\bigcup_{I,i}\pi_{I_0}(A^*_{I,i,V}\bigtriangleup A^0_{I,i,V})\\
&\ \ \cup\bigcup_{\vec i}\pi_{I_0}(D_{\vec i,V}\setminus B_{\vec i,V})
\end{align*}
which suffices to give the claim.

Observe that if $x_{I_0}\in A^*_{I_0}\setminus B^*_{I_0}$ then $x_{I_0}\in\bigcap_J A^*_{I_0,i,J}$ for some $i$.  Then there must be some $J$ so $x_{I_0}\not\in B^*_{I,i,J}$.  If $J\neq V$ then $x\not\in B_{\vec i,J}$ and $x_{I_0}\in\bigcap_{I'\supset J}A^*_{I',i_{I'},J}$, so $x_{I_0}\in D_{\vec i,J}\setminus B_{\vec i,J}$.

If $J=V$, suppose $x_{I_0}\in A^*_{I_0,i,V}\setminus B^*_{I_0,i,V}$ but
\[x_{I_0}\not\in \bigcup_{I}\pi_{I_0,\delta}(A^*_{I,i,V}\bigtriangleup A^0_{I,i,V})\cup\bigcup_{\vec i}\pi_{I_0,\delta}(D_{\vec i,V}\setminus B_{\vec i,V}).\]
Then there must be some $x_{V\setminus I_0}$ so that $x_V\not\in B^{**}_{I_0,i,V}\cup\bigcup_{I,i}(A^*_{I,i,V}\bigtriangleup A^0_{I,i,V})\cup\bigcup_{\vec i}(D_{\vec i,V}\setminus B_{\vec i,V})$, which is a contradiction by the same argument as in the $J\neq V$ case.
\end{claimproof} 

Since $\nu^I(A_I\setminus A^*_I)\leq\delta/2$, it follows that $\nu^I(A_I\setminus B^*_I)\leq\delta$.  Therefore $\bigcap_I B^*_I\neq\emptyset$.

\begin{claim}
  \[\bigcap_I B^*_I\subseteq \bigcup_{\vec i}\bigcap_J B_{\vec i,J}.\]
\end{claim}
\begin{proof}
  Suppose $x\in\bigcap_I B^*_I$.  Then for each $I$, there is an $i_I$ so that $x\in \bigcap_J B^*_{I,i,J}$.  Therefore $x\in A^*_{I,i_I,J}$ for $J\neq V$ and $x\in A^0_{I,i_I,V}$.  Therefore $x\in B_{\vec i,J}$ for each $J$.
\end{proof}

Since $\bigcap_{I}B^*_{I}$ is non-empty, there is some $\vec i$ such that $\bigcap_J B_{\vec i,J}\neq\emptyset$.  But this leads to a contradiction, so it must be that $\nu^V(\bigcap_I A^*_I)>0$, and therefore, as we have shown, $\nu^V(\bigcap_{I\in\mathcal{I}} A_I)\geq\frac{1}{|\mathcal{I}|+1}\nu^V(\bigcap_{I\in\mathcal{I}} A^*_I)>0$.
\end{proof}

In order to prove the hypergraph removal theorem, we would then hope to argue as follows: the failure of hypergraph removal implies the existence of a family of counterexamples of unbounded size.  We could then use a bit of model theory---the ultraproduct construction---to obtain an infinite hypergraph together with some measures in which $\nu^V(\bigcap_{I\in{V\choose k}}A_I)=0$ for a family of sets $A_I$ corresponding to the graph we are trying to remove.  By the previous theorem, we would have an arbitrarily small family of definable sets $B_I$, and we would then argue that that these sets correspond to sets in the finite models whose removal causes the removal of all copies of the hypergraph.  The only remaining difficulty in this argument is showing that the measure we obtain has $J$-regularity for all $J\subseteq V$.

In the remainder of this section, we carry out the proof for the dense case of hypergraph removal; this will necessarily involve some model theory.

\begin{lemma}\label{product_regular}
  Suppose that for each $J\subseteq V$, $\nu^J$ is a probability measure on $\mathcal{B}_J$ such that 
  \begin{itemize}
  \item For any $B\in\mathcal{B}_V$, the function $x_{V\setminus J}\mapsto\nu^J(B(x_{V\setminus J}))$ is measurable with respect to $\mathcal{B}_{V\setminus J}$, and
  \item For any $L^\infty(\nu^V)$ function $f$, $\int f\, d\nu^V=\iint f\, d\nu^Jd\nu^{V\setminus J}$.
  \end{itemize}
  Then $\nu^V$ has $J$-regularity for every $J\subseteq V$.  
\end{lemma}
\begin{proof}
We have
\begin{align*}
 &\int (g-\mathbb{E}(g\mid\mathcal{B}_{V,\mathcal{I}\wedge J}))\prod_{I}f_I\, d\nu^V\\
= &\int (g-\mathbb{E}(g\mid\mathcal{B}_{V,\mathcal{I}\wedge J}))\prod_{I}f_I\, d\nu^{J}d\nu^{V\setminus J}.\\
\end{align*}

For each $a_{V\setminus J}$, the function $\prod_{I}f_I(a_{I\setminus J},x_{I\cap J})$ is measurable with respect to $\mathcal{B}_{V,<J}$, so we have
\[\int (g-\mathbb{E}(g\mid\mathcal{B}_{V,\mathcal{I}\wedge J}))\prod_{I}f_I\, d\nu^{J}=0.\]
Since this holds for every $a_{V\setminus J}$, the claim follows by integrating over all choices of $a_{V\setminus J}$.
\end{proof}

 \begin{definition}
 Let $K,A$ be $k$-uniform hypergraphs on vertex sets $V(K),V(A)$ respectively.  $\pi:V(K)\rightarrow V(A)$ is a \emph{homomorphism} if whenever $e\in K$, $\pi"e\in A$.  (That is, $\pi$ maps edges to edges.)  $hom(K,A)$ is the number of distinct homomorphisms from $K$ to $A$.  If $K,A$ are $k$-uniform hypergraphs, we write
 \[d(K,A)=\frac{hom(K,A)}{|V(A)|^{|V(K)|}}.\]
 \end{definition}

\begin{theorem}[Hypergraph Removal]\label{dense_hypergraph_removal}
  For every $k$-uniform hypergraph $K$ and constant $\epsilon>0$, there is a $\delta$ such that whenever $A$ is a finite $k$-uniform hypergraph with $d(K,A)<\delta$, there is a subset $L$ of $A$ with $|L|\leq\epsilon{|V(A)|\choose k}$ such that $hom(K,A\setminus L)=0$.
\end{theorem}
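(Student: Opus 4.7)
The plan is to argue by contradiction via an ultraproduct construction, reducing the statement to the counting lemma, Theorem \ref{counting}. Suppose the conclusion fails for some fixed $k$-uniform hypergraph $K$ and some $\epsilon>0$: then for every $n$ there is a $k$-uniform hypergraph $A_n$ with $d(K,A_n)<1/n$ such that every subset $C\subseteq A_n$ with $|C|\leq \epsilon{|V(A_n)|\choose k}$ still satisfies $hom(K,A_n\setminus C)>0$. Without loss of generality $|V(K)|>k$, since if $K$ consists of a single edge the conclusion is trivial by removing every edge of $A_n$.

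Form the ultraproduct $\mathfrak{M}$ of the $A_n$ along a non-principal ultrafilter $\mathcal U$, with universe $M$, and let $\nu$ be the Loeb probability measure on $M$ induced from the normalized counting measures on $V(A_n)$. Writing $V:=V(K)$, let $\nu^V$ be the $V$-fold Loeb product measure on $M^V$. Each $\nu^J$ for $J\subseteq V$ is a definable Keisler probability measure (since $\nu_\varphi$ is the Loeb limit of finite normalized counts, hence continuous in the topology generated by $\mathcal{B}^0_W$), and Loeb--Fubini yields $\int f\,d\nu^V = \iint f\,d\nu^J\,d\nu^{V\setminus J}$ for every bounded measurable $f$; Lemma \ref{product_regular} then guarantees $J$-regularity of $\nu^V$ for every $J\subseteq V$.

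For each edge $e\in K$, define $A_e:=\{x_V\in M^V:x_e\in E\}\in\mathcal{B}^0_{V,e}$ where $E$ is the ultralimit edge relation, and observe $\nu^V(\bigcap_{e\in K}A_e)=\lim_{n\to\mathcal U}d(K,A_n)=0$. Apply the contrapositive of Theorem \ref{counting} with $\mathcal{I}=K$: for any $\delta>0$ we obtain sets $B_e\in\mathcal{B}^0_{V,e}$ with $\nu^V(A_e\setminus B_e)<\delta$ for all $e\in K$ and $\bigcap_{e\in K}B_e=\emptyset$. Each $B_e$ is a finite Boolean combination of formulas in the variables $w_e$ with parameters from $M$, so by \L{}o\'s's theorem it descends, for $\mathcal U$-many $n$, to sets $B_{e,n}\subseteq V(A_n)^e$ of the same Boolean form, with $\mu_n(A_{e,n}\setminus B_{e,n})<\delta$ in the normalized counting measure $\mu_n$ and $\bigcap_{e\in K}B_{e,n}=\emptyset$.

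Fix $\delta$ small enough that $|K|\,\delta\,|V(A_n)|^k<\epsilon{|V(A_n)|\choose k}$, pick such an $n$, and define the edge-removal set $C_n\subseteq A_n$ to consist of those $k$-edges $f$ which admit, for some $e\in K$, an ordered enumeration $x_e:e\to f$ with $x_e\notin B_{e,n}$. Then $|C_n|\leq\sum_{e\in K}|A_{e,n}\setminus B_{e,n}|<\epsilon{|V(A_n)|\choose k}$, while any homomorphism $\pi:V\to V(A_n\setminus C_n)$ would force $\pi_e\in B_{e,n}$ for every $e\in K$, placing $\pi_V$ in the empty set $\bigcap_{e\in K}B_{e,n}$, contradicting the hypothesis on $A_n$. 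The main technical point in this plan is establishing regularity of $\nu^V$, which in the dense case amounts to verifying Loeb--Fubini so that Lemma \ref{product_regular} applies directly; carrying this ingredient over to the sparse setting, where the ambient measure is no longer a product, is precisely the problem the rest of the paper addresses.
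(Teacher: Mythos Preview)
Your proposal is correct and follows essentially the same route as the paper: contradiction, ultraproduct, Loeb--Fubini plus Lemma~\ref{product_regular} for $J$-regularity, then the contrapositive of Theorem~\ref{counting}, and finally \L o\'s to return to a finite model. The only cosmetic difference is that the paper defines the removal set $C=\bigcup_I(A_I\setminus B_I)$ already in the ultraproduct and transfers the single first-order statement ``$\nu^{[1,k]}(C)<\epsilon$ and $\bigcap_I(A_I\setminus C)=\emptyset$'' down via \L o\'s, whereas you transfer each $B_e$ separately and assemble $C_n$ by hand in the finite model; both are fine.
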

\begin{proof}
  Suppose not.  Let $K,\epsilon$ be a counterexample, and since there is no such $\delta$, for each $n$ we may choose a $k$-uniform hypergraph $A^n$ with $d(K,A^n)<1/n$ such that there is no such subset $L$ of $A^n$.  Clearly $|V(A^n)|\rightarrow\infty$.  We view each $A^n$ as a model, with $M^n=V(A^n)$ the set of points, $A^n$ a $k$-ary relation on $V(A^n)$, and predicates making the normalized counting measure $\nu^J_n$ on $V(A^n)^J$ definable for each $J\subseteq V(K)$.  In particular, this means the counting measure is a \emph{uniformly} definable Keisler probability measure.

Let $V=V(K)$.  For each $I\in K$, let $A^n_I=\{x_V\mid x_I\in A^n\}$.  Note that the homomorphisms from $K$ to $A^n$ consist exactly of the elements of 
\[\bigcap_{I\in K}A^n_I,\]
and therefore $d(K,A^n)=\nu^V_n(\bigcap_{I\in K}A^n_I)$.  In particular, we have $\nu_n^V(\bigcap_{I\in K}A^n_I)\rightarrow 0$.

Now take an ultraproduct of the models $(M^n,A^n,\ldots)$ to obtain $\mathfrak{M}=(M,A,\ldots)$.  (See \cite{MR1643950} for the construction and, in particular, the demonstration that the measures defined by $\nu^J$, the ultraproduct of the $\nu^J_n$, extend to probability measures on $\mathcal{B}_J$.)  By \cite{MR0491155,MR732752} (or see Section \ref{model_theory}), the conditions in the statement of Lemma \ref{product_regular} hold in $\mathfrak{M}$, and therefore $\nu^V$ has $I$-regularity for all $I\subseteq V$.  We have $\nu^V(\bigcap_{I\in K}A_I)=0$, and therefore by the previous theorem, there are $B_I\in\mathcal{B}^0_{V,I}$ such that $\nu^V(E\setminus B_I)<\frac{\epsilon}{|K|}$ and $\bigcap_{I\in K}B_I=\emptyset$.  Let $C=\bigcup_I (A_I\setminus B_I)$, so $\nu^{[1,k]}(L)<\epsilon$.  $L$ is definable from parameters in $M$, and therefore
\[\bigcap_{I\in K}(A_I\setminus L)=\emptyset\]
is a formula, which is therefore satisfied by the corresponding set in almost every $(M^n,A^n,\ldots)$.  Let $L^n$ be the set defined in the model $(M^n,A^n,\ldots)$ by the formula defining $L$.  Then there is some sufficiently large $n$ such that $(\nu^n)^V(L_n)<\epsilon$ but $\bigcap_{I\in K}(A^n_I\setminus L_n)=\emptyset$, contradicting the assumption.
\end{proof}

Note that this argument, essentially unchanged, also gives variants like directed removal \cite{alon:MR2087940} (note that we never require the sets $A_I$ to be symmetric) or removal of colored graphs \cite{austin:MR2666763} (take the $A_I$ to be any of several sets, one corresponding to each color).


Our goal is to obtain the same result when $A$ is not a dense hypergraph, but rather a dense subset of a sparse random graph.  The main idea is that we will replace $\nu^V$ with a measure concentrating on the sparse pseudorandom graph; however this will not satisfy the easy Fubini decomposition we used for the dense case, so we will need to use the pseudorandomness---plus a large amount of additional machinery---to prove that the resulting measures nonetheless have regularity.


\section{Pseudorandomness and the Fubini Property}\label{pseudorandomness}

In this section we examine the Fubini property of measures more carefully as a property of pseudorandom hypergraphs.  The Fubini properties tell us that different methods of counting homomorphic copies of $E$ give the same values.  For instance $\int f(x_V) d\mu^V_{E,\emptyset}$ is the expected value if we choose a copy $x_V$ of $E$ at random and evaluate $f(x_V)$.  On the other hand $\iint f(x_V) d\mu^{V_0}_{E,x_{V_1}}d\mu^{V_1}_{E,\emptyset}$ is the expected value of the process where we first choose a copy of $(V_1,E\upharpoonright [V_1]^k)$ at random, and then extend this copy to a copy of $E$ at random and evaluate $f$ on the result.

For a simple example where these processes differ, consider the graph with vertex set $G_0\cup G_1$ (with $G_0\cap G_1=\emptyset$) where $|G_0|=2n$, $|G_1|=n^{2/3}$, and take $\Gamma$ to be the graph whose edges consist of a matching on $G_0$ (that is, exactly $n$ edges with each vertex in $G_0$ an endpoint of exactly one of them) and all possible edges on $G_1$.  Then when $n$ is large, almost all edges of $\Gamma$ belong to the complete subgraph $G_1$ while almost all vertices belong to $G_0$.  Let $V=\{0,1\}$ and $E$ be just the edge connecting $0$ to $1$.  Then $\mu^V_{E,\emptyset}$ simply counts edges in $\Gamma$, and so $\int \chi_{G_0\times G_0}(x_V)d\mu^V_{E,\emptyset}$ is the fraction of edges contained in $G_0$---namely, almost none of them, so $\int \chi_{G_0\times G_0}(x_V)d\mu^V_{E,\emptyset}\rightarrow 0$ as $n\rightarrow \infty$.  On the other hand, taking $V_0=\{0\}$ and $V_1=\{1\}$, $\iint \chi_{G_0\times G_0}(x_V)d\mu^{V_0}_{E,x_{V_1}}d\mu^{V_1}_{E,\emptyset}$ is the average where we first select a vertex $x_{V_1}$---which, with high probability, belongs to $G_0$---and then choose a second vertex $x_{V_0}$ from among those vertices connected to $x_{V_1}$ (that is, we only consider those extensions which actually give copies of $E$); when $x_{V_1}\in G_0$, $x_{V_0}$ is, with high probability, its matched element, so $\iint \chi_{G_0\times G_0}(x_V)d\mu^{V_0}_{E,x_{V_1}}d\mu^{V_1}_{E,\emptyset}\rightarrow 1$.

To avoid examples like this, we need a finitary analog of the Fubini property, which will serve as the pseudorandomness property we demand that our hypergraphs have.  As might be inferred from the example above, in finite hypergraphs we ask not for exact equality, but for approximate equality.


\begin{definition}
  Let $(G,\Gamma)$ be a $k$-uniform hypergraph with $G$ finite and let $V_0,V_1,W$ be disjoint sets and $E\subseteq {V_0\cup V_1\cup W\choose k}$.  We write $\mathcal{E}^\delta_{V_0,V_1,W,E}\subseteq\Gamma^W_{E,\emptyset}$ for the set of tuples $a_W\in\Gamma^W_{E,\emptyset}$ such that there is some partition $V=V_0\cup V_1$ such that
\[\int\left(\frac{|\Gamma^{V_1}_{E,x_{V_0}\cup a_W}|\cdot|\Gamma^{V_0}_{E,a_W}|}{|\Gamma^V_{E,a_W}|}-1\right)^2d\mu^{V_0}_{E,a_W}\geq\delta.\]

We say of $(G,\Gamma)$ that it \emph{$\delta$-consistently counts copies} of $(U,E)$ if whenever $V_0\cup V_1\cup W$ is a partition of $U$, $\mu^W_{E,\emptyset}(\mathcal{E}^\delta_{V_0,V_1,W,E})<\delta$.  We say of $(G,\Gamma)$ that it \emph{$\delta,d$-consistently counts copies} ($(\delta,d)$-ccc) if whenever $|U|\leq kd$ and $|E|\leq d$, $\Gamma$ $\delta$-consistently counts copies of $(U,E)$.
\end{definition}

Note that $\frac{|\Gamma^V_{E,a_W}|}{|\Gamma^{V_0}_{E,a_W}|}$ is the average number of ways that a copy of $V_0$ can be extended to a copy of $V_1$---that is, $\int\Gamma^{V_1}_{E,x_{V_0}\cup a_W}d\mu^{V_0}_{E,a_W}=\frac{|\Gamma^V_{E,a_W}|}{|\Gamma^{V_0}_{E,a_W}|}$.  Then consistently counting copies requires that most $x_{V_0}$ actually have close to an average number of extensions.  The further complication is that we allow an exceptional set of background parameters, the set $\mathcal{E}^\delta_{V_0,V_1,W,E}$, so long as this set is small.

In the graph case, this follows from the more familiar notion of \emph{bi-jumbled} graphs \cite{MR2319167} which shows up in proofs of sparse graph removal \cite{kohayakawa:MR1661982,ConlonFoxZhao}.  Recall that $\Gamma$ is $(p,\beta)$-bi-jumbled if for any sets of vertices $X,Y$,
\[\left|\,|(X\times Y)\cap \Gamma|-p|X|\cdot|Y|\,\right|\leq\beta\sqrt{|X|\cdot|Y|}.\]

\begin{lemma}
  For every graph $(U,E)$ there is a $d$ such that for each $\delta$ there is a $\gamma$ so that whenever $(G,\Gamma)$ is a $(p,\gamma p^dn)$-bi-jumbled graph with $n=|G|$ and $p=|\Gamma|/n^2$, $(G,\Gamma)$ $\delta$-consistently counts copies of $(U,E)$.
\end{lemma}

More generally, we wish to verify that $(\delta,d)$-ccc is a notion of pseudorandomness.  To do ths, we need to verify that when $\Gamma$ is chosen randomly and not too sparsely then, with high probability, $\Gamma$ $(\delta,d)$-consistently counts copies of $(U,E)$.  More precisely, for $p\in(0,1)$, fix the distribution $\mathbb{G}^k(n,p)$ on $k$-uniform hypergraphs with $n$ vertices is given by fixing a set $G$ of vertices with $|G|=n$ and assigning probability $p^e(1-p)^{{G\choose k}-e}$ to each hypergraph $\Gamma$ with $e$ vertices.

Equivalently, $\Gamma$ is chosen randomly according to $\mathbb{G}^k(n,p)$ if each edge belongs to $\Gamma$ independently with probability $p$.  With high probability, $\Gamma$ should have roughly $p{n\choose k}$ edges.  When we say $\Gamma$ is sparse, we mean we are considering the case where $p\rightarrow 0$ as $n\rightarrow\infty$.  By ``not too sparse'', we mean that $p$ is on the order of $n^{-1/r}$ for some positive natural number $r$.

The first step is to prove that, when $r$ is large enough, most random graphs have roughly the right number of small subgraphs.  For graphs this is quite standard \cite{MR809996}, and the proof readily generalizes to hypergraphs \cite{MR2592366}.
\begin{lemma}\label{thm:ccc_first_step}
  For any $k$-uniform hypergraph $(U,E)$, there is a sufficiently large $r$ so that for any $\delta,\epsilon>0$, for sufficiently large $n$, whenever $|G|=n$ and $\Gamma\subseteq{G\choose k}$ is chosen randomly according to $\mathbb{G}^k(n,n^{-1/r})$, with probability $\geq 1-\epsilon$,
\[\left||\Gamma^U_{E,\emptyset}|-n^{|U|}p^{|E|}\right|<\delta n^{|U|}p^{|E|}.\]
\end{lemma}
\begin{proof}
For each $x_U\in {G\choose U}$, let $I_{x_U}$ be the indicator variable which is equal to $1$ exactly when $x_U\in\Gamma^U_{E,\emptyset}$.  For any $x_U$, we have $\mathbb{E}(I_{x_U})=p^{|E|}$.
We have
\[|\Gamma^U_{E,\emptyset}|=\sum_{x_U\in{G\choose U}}I_{x_U},\]
so $\left|\mathbb{E}(|\Gamma^U_{E,\emptyset}|)-n^{|U|}p^{|E|}\right|<\delta n^{|U|}p^{|E|}/2$.

Then we have
\begin{align*}
    \mathbb{V}\mathrm{ar}(|\Gamma^U_{E,\emptyset}|)
&=\mathbb{V}\mathrm{ar}(\sum_{x_U}I_{x_U})\\
&=\sum_{x_U,y_U}\mathbb{C}\mathrm{ov}(I_{x_U},I_{y_U})\\
&=\sum_{x_U,y_U}(\mathbb{E}(I_{x_U}I_{y_U})-\mathbb{E}(I_{x_U})\mathbb{E}(I_{x_V}))\\
&=\sum_{x_U,y_U}(\mathbb{P}(I_{x_U}=1,I_{y_U}=1)-(\mathbb{P}(I_{x_U}=1))^2)\\
&=\sum_{x_U,y_U}(\mathbb{P}(I_{x_U}=1,I_{y_U}=1)-p^{2|E|}).
\end{align*}

$\mathbb{P}(I_{x_U}=1,I_{y_U}=1)$ is equal to $p^{2|E|-|E'|}$ where $(U',E')$ is the induced sub-hypergraph of $(U,E)$ isomorphic to the overlap between $x_U$ and $y_U$.  The terms where $|F'|=0$---that is, where $x_U$ and $y_U$ are disjoint (or at least have disjoint edges)---vanish.  For each induced sub-hypergraph $(U',E')$ of $(U,E)$, there are $\Theta(n^{2|U|-|U'|})$ pairs $x_U,y_U$ whose overlap is $(U',E')$, so for some $C_0,C_1$ depending only on $(U,E)$ (and independent of $r,n,\epsilon,\delta$)
\begin{align*}
\mathbb{V}\mathrm{ar}(|\Gamma^U_{E,\emptyset}|)
&\leq \sum_{(U',E')\subseteq(U,E),|E'|\neq 0}C_0 n^{2|U|-|U'|}(p^{2|E|-|E'|}-p^{2|E|})\\
&\leq \sum_{(U',E')\subseteq(U,E),|E'|\neq 0} C_1 n^{2|U|-|U'|}p^{2|E|-|E'|}.
\end{align*}
So picking $U'\subseteq U$ maximizing $n^{2|U|-|U'|}p^{2|E|-|E'|}$, we have
\[\mathbb{V}\mathrm{ar}(|\Gamma^U_{E,\emptyset}|)\leq C^2n^{2|U|-|U'|}p^{2|E|-|E'|}\]
for some $C$ independent of $r,n,\epsilon,\delta$.

In particular, by Chebyshev's inequality, for $\epsilon>0$, 
\begin{align*}
&\mathbb{P}(\left||\Gamma^U_{E,\emptyset}|-n^{|U|}p^{|E|}\right|\geq\delta n^{|U|}p^{|E|}/2
)\\
\leq &\mathbb{P}(\left||\Gamma^U_{E,\emptyset}|-\mathbb{E}(n^{|U|}p^{|E|})\right|\geq (\delta/2) n^{|U|}p^{|E|}
)\\
=&\mathbb{P}(\left||\Gamma^U_{E,\emptyset}|-\mathbb{E}(n^{|U|}p^{|E|})\right|\geq (\delta n^{|U'|/2}p^{|E'|/2}/4C)(Cn^{|U|-|U'|/2}p^{|E|-|E'|/2}))\\
\leq&\frac{4C^2}{\delta^2 n^{|U'|}p^{|E'|}}.
\end{align*}
When $r>\frac{|E'|}{|U'|}$, we have $n^{|U'|}p^{|E'|}>n^{|U'|-|E'|/r}\rightarrow\infty$, so this probability gets small as $n\rightarrow\infty$.  So by choosing $n$ large enough, we can ensure that $\frac{4C^2}{\delta^2n^{|U'|}p^{|E'|}}<\epsilon$, and so with probability $\geq 1-\epsilon$,
\[\left||\Gamma^U_{E,\emptyset}|-n^{|U|}p^{|E|}\right|<\delta n^{|U|}p^{|E|}\]
with high probability.
\end{proof}

\begin{lemma}\label{thm:ccc_intermediate_step}
  For any $k$-uniform hypergraph $(V\cup W,E)$ there is a sufficiently large $r$ so that for any $\delta,\epsilon>0$ and sufficiently large $n$, when $|G|=n$ and $\Gamma\subseteq{G\choose k}$ is chosen randomly according to $\mathbb{G}^k(n,n^{-1/r})$, with probability $\geq 1-\epsilon$,
 \[\int\left(|\Gamma^V_{E,x_W}|-\frac{|\Gamma^{V\cup W}_{E,\emptyset}|}{|\Gamma^W_{E,\emptyset}|}\right)^2d\mu^W_{E,\emptyset}<\delta n^{2|V|}p^{2(|E|-|E\upharpoonright[W]^2|)}.\]
\end{lemma}
\begin{proof}
  Let $E_W=E\upharpoonright [W]^2$ and $E_V=E\setminus E_W$.  We choose $r$ large enough so that we can apply the previous lemma to several hypergraphs based on $(V\cup W,E)$, to be determined in the course of the proof.  We can then choose $\delta'$ sufficiently small relative to $\delta$.  Then, by the previous lemma, when $n$ is large enough we have
\[\left||\Gamma^W_{E,\emptyset}|-n^{|W|}p^{|E_W|}\right|<\delta' n^{|W|}p^{|E_W|}.\]


Let $I^{x_W}(x_V)$ be the indicator function which is $1$ when $x_V\in\Gamma^V_{E,x_W}$, so $|\Gamma^V_{E,x_W}|=\sum_{x_V\in{G\choose V}}I^{x_W}(x_V)$.  Expanding the integral gives
\begin{align*}
  \int\left(|\Gamma^V_{E,x_W}|-\frac{|\Gamma^{V\cup W}_{E,\emptyset}|}{|\Gamma^W_{E,\emptyset}|}\right)^2d\mu^W_{E,\emptyset}
&=\int \sum_{x_V\in {G\choose V},y_V\in {G\choose V}}I^{x_W}(x_V)I^{x_W}(y_V)d\mu^W_{E,\emptyset} -\left(\frac{|\Gamma^{V\cup W}_{E,\emptyset}|}{|\Gamma^W_{E,\emptyset}|}\right)^2.
\end{align*}

Consider the graph $((V\times\{0,1\})\cup W,E+V)$ whose edges have the form
\[(J\cap W)\cup \{(v,\omega(v))\mid v\in J\cap V\}\]
for $J\in E$ and $\omega:J\cap V\rightarrow\{0,1\}$.  Then we have
\begin{align*}
  \int \sum_{x_V\in {G\choose V},y_V\in {G\choose V}}I^{x_W}(x_V)I^{x_W}(y_V)d\mu^W_{E,\emptyset} 
&=\int \sum_{x_V\in {G\choose V},y_V\in {G\choose V}, x_V\cap y_V=\emptyset}I^{x_W}(x_V)I^{x_W}(y_V)d\mu^W_{E,\emptyset}\\
&\ \ \ \ \ \ \ \ +\int \sum_{x_V\in {G\choose V},y_V\in {G\choose V}, x_V\cap y_V\neq\emptyset}I^{x_W}(x_V)I^{x_W}(y_V)d\mu^W_{E,\emptyset}\\
&=\frac{|\Gamma^{((V\times\{0,1\})\cup W,E+V)}_{E+V,\emptyset}|}{|\Gamma^W_{E,\emptyset}|}\\
&\ \ \ \ \ \ \ \ +\int \sum_{x_V\in {G\choose V},y_V\in {G\choose V}, x_V\cap y_V\neq\emptyset}I^{x_W}(x_V)I^{x_W}(y_V)d\mu^W_{E,\emptyset}.
\end{align*}
When $r$ is sufficiently large (depending only on $(V,E)$), we have
\[\int \sum_{x_V\in {G\choose V},y_V\in {G\choose V}, x_V\cap y_V\neq\emptyset}I^{x_W}(x_V)I^{x_W}(y_V)d\mu^W_{E,\emptyset}\leq Cn^{2|V|-1}<(\delta/2) n^{2|V|}p^{2|E_V|}\]
and $\left||\Gamma^{((V\times\{0,1\})\cup W,E+V)}_{E+V,\emptyset}|-n^{|W|+2|V|}p^{|E_W|+2|E_V|}\right|<\delta' n^{|W|+2|V|}p^{|E_W|+2|E_V|}$
once $n$ is sufficiently large.

Therefore
\[\int\left(|\Gamma^V_{E,x_W}|-\frac{|\Gamma^{V\cup W}_{E,\emptyset}|}{|\Gamma^W_{E,\emptyset}|}\right)^2d\mu^W_{E,\emptyset}<\delta n^{2|V|}p^{2|E_V|}.\]
\end{proof}

\begin{cor}\label{thm:ccc_inter}
  For any $k$-uniform hypergraph $(V\cup W,E)$ there is a sufficiently large $r$ so that for any $\delta,\epsilon>0$ and sufficiently large $n$, when $|G|=n$ and $\Gamma\subseteq{G\choose k}$ is chosen randomly according to $\mathbb{G}^k(n,n^{-1/r})$, with probability $\geq 1-\epsilon$, the set of $a_W$ such that
\[\left||\Gamma^V_{E,a_W}|-n^{|V|}p^{|E|-|E\upharpoonright [W]^2|}\right|\geq \delta n^{|V|}p^{|E|-|E\upharpoonright [W]^2|}\]
has size $<\epsilon n^{|W|}p^{|E\upharpoonright [W]^2|}$.
\end{cor}
\begin{proof}
By the previous lemma together with Chebyshev's inequality, we can choose $n$ large enough that the set of $a_W$ such that $\left||\Gamma^V_{E,a_W}|-\frac{|\Gamma^{V\cup W}_{E,\emptyset}|}{|\Gamma^W_{E,\emptyset}|}\right|\geq (\delta/2)n^{|V|}p^{|E\upharpoonright [W]^2|}$ has size $\leq\epsilon$ with probability $\geq 1-\epsilon/2$.  Additionally, using Lemma \ref{thm:ccc_first_step}, we can choose $n$ large enough that with probability $\geq 1-\epsilon/2$, $\left|\frac{|\Gamma^{V\cup W}_{E,\emptyset}|}{|\Gamma^W_{E,\emptyset}|}-n^{|V|}p^{|E|-|E\upharpoonright [W]^2|}\right|<(\delta/2)n^{|V|}p^{|E\upharpoonright [W]^2|}$.
\end{proof}

\begin{theorem}\label{thm:ccc}
For any $k$-uniform hypergraph $(U,E)$ there is a sufficiently large $r$ so that for any $\delta,\epsilon>0$ and sufficiently large $n$, when $|G|=n$ and $\Gamma\subseteq{G\choose k}$ is chosen randomly according to $\mathbb{G}^k(n,n^{-1/r})$, with probability $\geq 1-\epsilon$, $\Gamma$ $\delta$-consistently counts copies of $(U,E)$.
\end{theorem}
\begin{proof}
Consider any partition $U=V_0\cup V_1\cup W$.  Corollary \ref{thm:ccc_inter} allows us to show that $\Gamma^{V_0\cup V_1}_{E,a_W}$ and $\Gamma^{V_0\cup(V_1\times\{0,1\})}_{E+V_1,a_W}$ have close to the right size for most $a_W$.  We may repeat the proof of Lemma \ref{thm:ccc_intermediate_step} for each such $a_W$ to show that
\[\int\left(|\Gamma^{V_0\cup W}_{E,x_{V_0}\cup a_W}|-\frac{|\Gamma^{V_0\cup V_1\cup W}_{E,a_W}|}{|\Gamma^{V_0\cup W}_{E,a_W}|}\right)^2d\mu^W_{E,a_W}<\delta n^{2|V_1|}p^{2(|E|-|E\upharpoonright[V_0\cup W]^2|)}\]
\end{proof}

To our knowledge, $(\delta,d)$-ccc is not quite identical to any other notion of pseudorandomness in the literature \cite{MR1909084,MR2864650}.  When thinking in terms of model theory, as we are here, it is natural to consider only those sets which are definable, and as a result, all the sets we consider have a rate of growth from some small fixed list---in our case, we only need to worry about the behavior of sets $X$ where $|X|$ is $O(n^p)$ for finitely many choices of $p$.  The definition of notions like bi-jumbledness, on the other hand, ranges over all possible sets.  In practice, however, this is no difference; a given proof using a pseudorandomness assumption only uses the assumption for a fixed list of sets, which have a fixed list of rates of growth.  In particular, we expect that the proofs in \cite{kohayakawa:MR1661982,ConlonFoxZhao} go through unchanged if one replaces the assumption that the ambient graph is $(p,\beta)$-bi-jumbled with the assumption that it is $(\delta,d)$-ccc with an appropriate choice of parameters.

We observe that $\delta,d$-ccc implies an approximate version of the Fubini property.
\begin{lemma}\label{thm:approx_fubini}
Suppose $a_W\not\in\mathcal{E}^{\delta^2}_{V,W,E}$.  Then whenever $V=V_0\cup V_1$ is a non-trivial partition and $A\subseteq\Gamma^V_{E,a_W}$,
\[\left|\mu^V_{E,a_W}(A)-\int \mu^{V_1}_{E,x_{V_0}\cup a_W}(A(x_{V_0}))\, d\mu^{V_0}_{E,a_W}\right|<\delta.\]  
\end{lemma}
\begin{proof}
We have
  \begin{align*}
    \mu^V_{E,a_W}(A)
&=\frac{|A\cap\Gamma^V_{E,a_W}|}{|\Gamma^V_{E,a_W}|}\\
&=\sum_{x_{V_0}\in\Gamma^{V_0}_{E,a_W}}\frac{|A(x_{V_0})\cap\Gamma^{V_1}_{E,x_{V_0}\cup a_W}|}{|\Gamma^V_{E,a_W}|}\\
&=\int |A(x_{V_0})\cap\Gamma^{V_1}_{E,x_{V_0}\cup a_W}|\frac{|\Gamma^{V_0}_{E,a_W}|}{|\Gamma^V_{E,a_W}|}d\mu^{V_0}_{E,a_W}\\
&=\int \frac{|A(x_{V_0})\cap\Gamma^{V_1}_{E,x_{V_0}\cup a_W}|}{|\Gamma^{V_1}_{E,x_{V_0}\cup a_W}|}d\mu^{V_0}_{E,a_W}\\
&\ \ \ \ -\int \frac{|A(x_{V_0})\cap\Gamma^{V_1}_{E,x_{V_0}\cup a_W}|}{|\Gamma^{V_1}_{E,x_{V_0}\cup a_W}|}\left(\frac{|\Gamma^{V_0}_{E,a_W}|\cdot |\Gamma^{V_1}_{E,x_{V_0}\cup a_W}|}{|\Gamma^V_{E,a_W}|}-1\right)d\mu^{V_0}_{E,a_W}.\\
&=\int \mu^{V_1}_{E,x_{V_0}\cup a_W}(A(x_{V_0}))d\mu^{V_0}_{E,a_W}\\
&\ \ \ \ -\int \mu^{V_1}_{E,x_{V_0}\cup a_W}(A(x_{V_0}))\left(\frac{|\Gamma^{V_0}_{E,a_W}|\cdot |\Gamma^{V_1}_{E,x_{V_0}\cup a_W}|}{|\Gamma^V_{E,a_W}|}-1\right)d\mu^{V_0}_{E,a_W}.\\
  \end{align*}
So it suffices to observe that, since $\mu^{V_1}_{E,x_{V_0}\cup a_W}(A(x_{V_0}))\leq 1$,
\begin{align*}
  &\left|\int \mu^{V_1}_{E,x_{V_0}\cup a_W}(A(x_{V_0}))\left(\frac{|\Gamma^{V_0}_{E,a_W}|\cdot |\Gamma^{V_1}_{E,x_{V_0}\cup a_W}|}{|\Gamma^V_{E,a_W}|}-1\right)d\mu^{V_0}_{E,a_W}\right|\\
\leq&\sqrt{\int \mu^{V_1}_{E,x_{V_0}\cup a_W}(A(x_{V_0}))^2d\mu^{V_0}_{E,a_W}\int\left(\frac{|\Gamma^{V_0}_{E,a_W}|\cdot |\Gamma^{V_1}_{E,x_{V_0}\cup a_W}|}{|\Gamma^V_{E,a_W}|}-1\right)^2d\mu^{V_0}_{E,a_W}}\\
\leq&\sqrt{\delta^2}.
\end{align*}
since $a_W\not\in\mathcal{E}^{\delta^2}_{V,W,E}$.
\end{proof}

\section{Models}\label{model_theory}

In this section we deal with the passage from a sequence of finite hypergraphs (for our purposes, a sequence of hypothetical counterexamples to hypergraph removal) to a single infinitary hypergraph.  This requires the use of some model theory to produce an ultraproduct with suitable properties.  

We will refer to our models as $\mathfrak{M},\mathfrak{N}$, and to the corresponding universes of these models as $M,N$ respectively.  We will refer to formal variables in the language of first-order logic with the letter $w$, reserving the letters $x,y$ and so on for elements of models (for instance, when integrating over a model).  We will often refer to fixed elements of a model (used as constants or parameters) with the letters $a,b,c$.  In keeping with our tuple notation, we will often refer to finite sets of variables as $w_V,w_W$, etc..

 Recall that when $\varphi$ is a formula with free variables $w_V$, $\mathfrak{M}$ is a model of first-order logic, and $x_V\in M^V$, we write $\mathfrak{M}\vDash\varphi(x_V)$ to indicate that the formula holds when we interpret each free variable $w_v$ by the element $x_v$.  A set $B\subseteq M^V$ is \emph{definable} if $B=\{x_V\mid\mathfrak{M}\vDash\varphi(x_V)\}$ for some formula $\varphi$.  When the model $\mathfrak{M}$ is clear from context, we will often equate formulas with the sets they define---for instance, if $B$ is a definable set, we will also consider $B$ to be the formula defining this set, so by abuse of notation, $B=\{x_V\mid\mathfrak{M}\vDash B(x_V)\}$.  We say $B$ is \emph{definable from parameters} if $B=C(a_W)$ for some definable set $C$.

 Similarly, when $f$ is a simple function built from sets definable from parameters, so $f=\sum_{i\leq n}\alpha_i\chi_{C_i}$ where each $\alpha_i$ is rational and each $C_i$ is definable from parameters, we some view $f$ as being a ``rational linear combination'' of formulas, and refer to the union of the parameters defining all the sets $C_i$ as the parameters of $f$.

From here on, for any $V$, we understand $\mathcal{B}^0_V$ to be the collection of sets of $V$-tuples definable (with parameters) in the model $\mathfrak{M}$ (which will always be clear from context).

In our infinitary setting, we no longer have the underlying counting measures to refer to, so we will have to define formally the properties we want a family of measures to have.  We will use the meta-variable $\mu$ for a \emph{family of probability measures}---technically, a function from appropriate finite sets to probability measures, so when $\mu$ is a family of probability measures, $\mu^V_{E,x_W}$ is an actual probability measure for suitable values of $V$, $E$, $x_W$.

\begin{definition}
 Let $\mathfrak{M}$ be a model.  We say $\mu$ is a \emph{weakly canonical family of probability measures of degree $k$ and size $d$} if for any finite sets $V,W$ with $V\cap W=\emptyset$, any $k$-uniform hypergraph $E$ on $V\cup W$ with $|E|\leq d$, and any $x_W\in M^W$, a probability measure $\mu^V_{E,x_W}$ on $\mathcal{B}_V$ such that:
 \begin{enumerate}
 \item For $\mu^W_{E,\emptyset}$-almost-every $x_W$, $\mu^V_{E,x_W}$ is a definable Keisler probability measure,
\item If no edges in $E$ contain both $w$ and an element of $V$ then $\mu^V_{E,x_{W\cup\{w\}}}=\mu^V_{E,x_W}$,
\item If $\pi:V_0\cup W_0\rightarrow V_1\cup W_1$ is a bijection mapping $V_0$ to $V_1$ and $W_0$ to $W_1$ and $\pi(E_0)=E_1$ then $\mu^{V_0}_{E_0,x_{W_0}}=\mu^{V_1}_{E_1,x_{\pi(W_0)}}$.
 \end{enumerate}

We say $\mu$ is a \emph{canonical family of probability measures} if additionally
\begin{enumerate}
\item[(4)] When $V=V_0\cup V_1$, $V_0\cap V_1=\emptyset$, for $\mu^W_{E,\emptyset}$-almost every $x_W$ these measures satisfy the Fubini properties
\[\int\cdot\, d\mu^V_{E,x_W}=\iint\cdot\, d\mu^{V_1}_{E,x_{V_0}\cup x_W}d\mu^{V_0}_{E,x_W}.\]
\end{enumerate}
\end{definition}
Weak canonicity merely enforces a certain amount of uniformity on these measures---the second condition requires that $x_w$ only matters if there is an edge connecting $w$ to a vertex in $V$, and the third condition says that the measures depend only on the isomorphism class of the hypergraph $(V\cup W,E)$, not the particular choice of indices to represent it.  The Fubini condition is non-trivial, and it is ensuring this property that requires us to work only with sufficiently pseudorandom sparse hypergraphs.

The Fubini property obviously implies that we can exchange the order of integrals:
\[\iint\cdot\, d\mu^{V_1}_{E,x_{V_0}\cup x_W}d\mu^{V_0}_{E,x_W}=\iint\cdot\, d\mu^{V_0}_{E,x_{V_1}\cup x_W}d\mu^{V_1}_{E,x_W}.\]
In this form, the property is essentially the measure-invariance property which characterizes \emph{graphings}, which are the limits of extremely sparse graphs (usually bounded degree, and therefore much sparser than the graphs discussed in this paper); see \cite{elek:MR2359831}.

The natural language to begin with is a language with two $k$-ary relations, one for the ambient hypergraph $\Gamma$ and one for a sub-hypergraph $A$.  We wish to work in models which have two additional features: first, the model actually includes formulas defining all of the measures in the family $\mu$.  Second, for technical reasons, the model contains extra function symbols $\mathbf{max}$ which pick out values maximizing certain integrals.  (The construction of such languages has appeared a few times: see \cite{hrushovski,towsner09:gowers}, and a general theory of constructions of this kind is given in \cite{goldbring:_approx_logic_measure}.)
\begin{definition}
Let $\mathcal{L}$ be a language of first-order logic containing a $k$-ary relation symbol $\gamma$, and let $d$ be given.  $\mathcal{L}^{\gamma,d}$ is the smallest language containing $\mathcal{L}$ such that:
\begin{itemize}
\item Whenever $\varphi(w_V,w_W,w_P)$ is a formula with the displayed free variables, $W$ is a set disjoint from $V$, $E$ is a $k$-uniform hypergraph on $V\cup W$ with $|E|\leq d$, and $q\in[0,1]$ is rational, there are formulas
\[ m^V_{E,w_W}\leq q. \varphi\]
and
\[ m^V_{E,w_W}< q. \varphi\]
with free variables $w_W,w_P$, and
\item Whenever $E$ is a $k$-uniform hypergraph with $\leq d$ edges on a vertex set $V$, $V=V_0\cup V_1$ is a partition of $V$, $W$ and $P$ are finite sets with $V,W,P$ pairwise disjoint, $f$ is a rational linear combination of formulas with free variables $w_W,w_V$, and $\varphi(w_W,w_P,w_{V})$ is a formula with the displayed free variables, for each $p\in P$ there is a function symbol $\mathbf{max}^{E,V_0,f,\varphi}_p(w_W,w_{V_0})$.
\end{itemize}
\end{definition}
Note that the formulas $m^V_{E,w_W}\leq q.\varphi$ and $m^V_{E,w_W}< q. \varphi$ bind the variables $w_V$.  We will ``abbreviate'' these formulas as $m^V_{E,w_W}(\varphi)\leq q$ and $m^V_{E,w_W}(\varphi)<q$ respectively.  We will abbreviate $\neg m^V_{E,w_W}(\varphi)\leq q$ by $m^V_{E,w_W}(\varphi)> q$ and $\neg m^V_{E,w_W}(\varphi)< q$ by $m^V_{E,w_W}(\varphi)\geq q$.  We view $\{\mathbf{max}^{E,V_0,f,\varphi}_p(w_W,w_P,w_{V_0})\}_{p\in P}$ as a tuple $\mathbf{max}^{E,V_0,f,\varphi}_{P}(w_W,w_{V_0})$ of function symbols.

\begin{definition}
If $\mathfrak{M}$ is a finite model of $\mathcal{L}$ and $\Gamma=\gamma^{\mathfrak{M}}$ is the interpretation of $\gamma$ in this model, we expand $\mathfrak{M}$ to a model $\mathfrak{M}^{\Gamma,d}$ of $\mathcal{L}^{\gamma,d}$ by interpreting, for any $a_W\in M^W$,
\[ \mathfrak{M}^{\Gamma,d}\vDash m^V_{E,a_W}(B)\leq q\]
to hold iff
\[\mu^V_{E,a_W}(B)\leq q\]
whenever $B$ is definable from parameters, and similarly for $m^V_{E,a_W}(B)< q$.

Suppose we have interpreted the formula $\varphi$ and all the formulas defining the simple function $f$.  Let $B$ be the set defined by $\varphi$.  For each $a_W\in M^W, x_{V_0}\in M^{V_0}$, we choose $(\mathbf{max}_{P}^{E,V_0,f,\varphi}(a_W,x_{V_0}))^{\mathfrak{M}^{\Gamma,d}}$ to be some tuple $b_P$ maximizing $\left|\int f \chi_{B(a_W,x_{V_0},b_P)}\,d\mu^{V_1}_{E,x_{V_0}}\right|$.
\end{definition}
Note that we consistently use $m$ to refer to the formula of first-order logic describing a measure, and $\mu$ to the actual measure corresponding to $m$.  Also, note that in the interpretation of $\mathbf{max}_{P}^{E,V_0,f,\varphi}(a_W,x_{V_0})$, $B$ depends on $a_W,x_{V}$, and $b_P$, while $f$ depends on only $a_W$ and $x_V$.

Let $\mathcal{L}$ be the language consisting of two $k$-ary relation symbols, $\gamma$ and $\alpha$.

\begin{theorem}\label{ultraproduct}
Let $\epsilon>0$.  Suppose that for each $n$, $\Gamma_n$ is a $\delta_n,d$-ccc $k$-uniform hypergraph where $\delta_n\rightarrow 0$, and let $A_n\subseteq \Gamma_n$ be given with $|A_n|\geq\epsilon|\Gamma_n|$.  Then each $\mathfrak{M}_n=(\Gamma_n,A_n)$ is a model of $\mathcal{L}$.  Let $\mathcal{U}$ be an ultrafilter on $\mathbb{N}$ and let $\mathfrak{M}$ be the ultraproduct of the models $\mathfrak{M}^{\Gamma_n,d}_n$.  Then $\mathfrak{M}$ is a model of $\mathcal{L}^{\gamma,d}$ such that:
  \begin{enumerate}
  \item $\mathfrak{M}\vDash\sigma$ iff for $\mathcal{U}$-almost-every $n$, $\mathfrak{M}_n^{\Gamma_n,d}\vDash\sigma$.
  \item There is a canonical family of probability measures of degree $k$ and size $d$, $\mu^V_{E,x_W}$ on the $\sigma$-algebra generated by the definable subsets of $M^V$ such that whenever $B$ is definable from parameters,
\[\mu^V_{E,a_W}(B)=\inf\{q\in\mathbb{Q}^{>0}\mid \mathfrak{M}\vDash m^V_{E,a_W}(B)<q\}.\]
\item $\mu^{[1,k]}_{\{[1,k]\}}(A)\geq\epsilon$.
\item Whenever $E$ is a $k$-uniform hypergraph with $\leq d$ edges on a vertex set $V\cup W$, $V=V_0\cup V_1$ is a partition of $V$, $W$ and $P$ are finite sets with $V,W,P$ pairwise disjoint, $f$ is a rational linear combination of formulas with free variables $w_W,w_V$, and $\varphi(w_W,w_P,w_{V})$ is a formula with the displayed free variables, for almost every $a_W\in M^W, b_P\in M^P, x_{V_0}\in M^{V_0}$,
\[\left|\int f\chi_{B(a_W,x_{V_0},\mathbf{max}_P^{E,V_0,f,\varphi}(a_W,x_{V_0}))}\,d\mu^{V_1}_{E,x_{V_0}}\right|\geq\left|\int f\chi_{B(a_W,x_{V_0},b_P)}\,d\mu^{V_1}_{E,x_{V_0}}\right|.\]
\end{enumerate}
\end{theorem}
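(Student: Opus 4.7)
The plan is to handle the four claims in order, with \L o\'s's theorem as the main tool and the analytic work concentrated on countable additivity and on the Fubini property that distinguishes canonicity from weak canonicity. Claim (1) is immediate from \L o\'s's theorem. Claim (3) follows from \L o\'s applied to $|A_n| \geq \epsilon|\Gamma_n|$, which in each $\mathfrak{M}_n^{\Gamma_n, d}$ is formalized as $m^{[1,k]}_{\{[1,k]\}}(\alpha) \geq \epsilon$. Claim (4) similarly transfers, since the interpretation of $\mathbf{max}_P^{E, V_0, f, \varphi}$ in each $\mathfrak{M}_n^{\Gamma_n, d}$ achieves the stated maximum by construction.

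For (2), I would set $\mu^V_{E, a_W}(B) := \inf\{q \in \mathbb{Q}^{>0} : \mathfrak{M} \vDash m^V_{E, a_W}(B) < q\}$ on the Boolean algebra $\mathcal{B}^0_V$ of sets definable from parameters. Finite additivity, monotonicity, and the probability condition follow from \L o\'s applied to the corresponding identities in the finite models. Countable additivity on $\mathcal{B}^0_V$ follows from $\aleph_1$-saturation of the countable ultraproduct $\mathfrak{M}$: if $B_n \supseteq B_{n+1}$ is a decreasing sequence of definable sets with $\bigcap B_n = \emptyset$, the partial type $\{w_V \in B_n\}_n$ is finitely satisfiable but unrealized, so saturation forces some $B_n = \emptyset$ and hence $\mu^V_{E, a_W}(B_n) \to 0$. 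Carath\'eodory's theorem then extends $\mu^V_{E, a_W}$ to $\mathcal{B}_V$. The weak canonicity clauses (irrelevance of unused parameters, invariance under bijections) are syntactic and transfer directly; the definable Keisler condition holds because $\{a_W : \mu^V_{E, a_W}(B) < q\}$ is exactly the $\mathcal{B}^0_W$-definable set interpreting $m^V_{E, w_W}(B) < q$, giving continuity of $\nu_\varphi$ in the $\mathcal{B}^0_W$-topology.

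The technical heart is the Fubini decomposition. Fix a formula $\varphi(w_V, w_W, w_P)$ and a partition $V = V_0 \sqcup V_1$. In each $\mathfrak{M}_n^{\Gamma_n, d}$, suitable randomness gives $\mu^W_E(\mathcal{E}^{\delta_n}_{V, W, E}) < \delta_n$; since $\mathcal{E}^{\delta_n}$ quantifies over \emph{all} subsets $A \subseteq \Gamma^V_{E, a_W}$, it contains the set $\mathrm{Bad}_\varphi^{\delta_n}$ of those $a_W$ for which some $b_P$ witnesses a Fubini gap $\geq \delta_n$ for $\varphi(-, a_W, b_P)$. The inequality $\mu^W_E(\mathrm{Bad}_\varphi^\delta) < \delta$ is expressible in $\mathcal{L}^{\gamma, d}$ (the existential quantifier over $b_P$ is first-order) and holds in $\mathfrak{M}_n^{\Gamma_n, d}$ whenever $\delta_n < \delta$, so \L o\'s transfers $\mu^W_E(\mathrm{Bad}_\varphi^\delta) \leq \delta$ to $\mathfrak{M}$ for every rational $\delta > 0$. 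Because the $\mathrm{Bad}_\varphi^{1/k}$ are increasing in $k$, the bound $\mu^W_E(\mathrm{Bad}_\varphi^{1/k}) \leq \mu^W_E(\mathrm{Bad}_\varphi^{1/j}) \leq 1/j$ valid for every $j \geq k$ actually forces $\mu^W_E(\mathrm{Bad}_\varphi^{1/k}) = 0$. A countable union over $k$ and over formulas $\varphi$ (recall $\mathcal{L}^{\gamma, d}$ is countable) leaves a single $\mu^W_E$-null set outside of which the Fubini identity holds exactly for every indicator of a set definable from parameters; linearity and bounded convergence then extend it to all $L^\infty$ integrands.

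The main obstacle I anticipate is precisely this final Fubini step: the suitable-randomness hypothesis quantifies over all subsets in the finite setting, but \L o\'s can only transfer the weaker first-order version concerning formula-definable sets. The key moves are (i) recognizing that the existential over $b_P$ in $\mathrm{Bad}_\varphi^\delta$ is first-order so \L o\'s applies; (ii) turning the quantitative bound $\mu^W_E(\mathrm{Bad}_\varphi^{1/k}) \leq 1/k$ into genuine nullness via the monotonicity argument above; and (iii) reducing from arbitrary definable sets to the countably many formulas of $\mathcal{L}^{\gamma, d}$ so that a single null exceptional set suffices.
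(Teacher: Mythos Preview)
Your overall architecture matches the paper's: (1) and (3) are \L o\'s, the Loeb construction plus syntactic transfer give the measure and weak canonicity in (2), and the Fubini identity is where the work lies. The countable-additivity argument via $\aleph_1$-saturation is fine and is essentially the Loeb construction the paper cites.

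There is, however, a real gap in your Fubini step. You assert that the sentence ``$\mu^W_E(\mathrm{Bad}_\varphi^\delta)<\delta$'' is expressible in $\mathcal{L}^{\gamma,d}$; this requires $\mathrm{Bad}_\varphi^\delta$ itself to be definable. The existential over $b_P$ is indeed first-order, but membership in $\mathrm{Bad}_\varphi^\delta$ also asserts a bound on the iterated integral $\int \mu^{V_1}_{E,x_{V_0}\cup a_W}(B)\,d\mu^{V_0}_{E,a_W}$, and the language contains only the atomic predicates $m^V_{E,w_W}(\varphi)<q$ and $m^V_{E,w_W}(\varphi)\leq q$: there is no primitive for integrating one measure against another. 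The paper flags exactly this (``integrals are not directly definable in our language, and there are `different ways' a function could have a given integral'') and devotes most of the proof of (2) to it. Its argument runs by contradiction: given a positive-measure set $X_0$ of $x_W$ where Fubini fails by more than $\delta$, it refines to a subset $X_2\subseteq X_0$ on which the level-set profile $(\pi_i(x_W))_i$ of the inner function $x_{V_0}\mapsto\mu^{V_1}_{E,x_{V_0}\cup x_W}(B)$ lies in fixed rational windows; only then can a single first-order formula $\psi$ be written whose extension contains $X_2$ and forces a genuine Fubini gap. Transferring $m^W_E(\psi)>\zeta$ back to the finite models then contradicts $\delta_n\to 0$. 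Your forward approach can be repaired with the same level-set machinery (replace $\mathrm{Bad}_\varphi^\delta$ by a definable over-approximation lying inside $\mathrm{Bad}_\varphi^{\delta/2}$), but you would be reproducing this argument; the contradiction framing is cleaner because it needs one tailored approximation rather than a uniform scheme. Your ``key move (i)'' is necessary but not sufficient.

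A lighter version of the same issue affects (4): ``achieves the maximum by construction'' is true in each $\mathfrak{M}_n$, but the inequality to be transferred compares two integrals, which again are not primitive. The paper observes that here the parameters are \emph{fixed}, so any strict lower bound on $|\int f\chi_B\,d\mu^{V_1}_{E,x_{V_0}}|$ for simple $f=\sum\alpha_i\chi_{C_i}$ is witnessed by a finite conjunction of $m$-atoms on the pieces $C_i\cap B$, and \L o\'s then applies. This step is short but not automatic, and your one-line dismissal skips it.
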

\begin{proof}
\begin{enumerate}[leftmargin=0pt,itemindent=1.6em,itemsep=1em]
\item  The first part is the standard \L o\'s Theorem for ultraproducts.
\item That the measures $\mu^V_{E,a_W}$ defined as in the statement extend to genuine probability measures on $\mathcal{B}_V$ is the standard Loeb measure construction.  The measures $\mu^V_{E,a_W}$ in the finite models are uniformly definable Keisler measures, and so the measures $\mu^V_{E,a_W}$ are definable Keisler measures as well (see \cite{goldbring:_approx_logic_measure} for details).  This satisfies the first requirement of weak canonicity.  The second and third requirements in the definition of weak canonicity are implied by formulas saying that certain measures are equal---for instance, the second requirement is implied by formulas of the form
\[\forall x_W\forall x_w\forall y(m^V_{E,x_{W\cup \{w\}}}\leq q.\phi(z,x_W,y)\leftrightarrow m^V_{E,x_W}\leq q.\phi(z,x_W,y)).\]
These formulas are all satisfied in all the finite models, and so by the first part, are also satisfied in $\mathfrak{M}$.  It follows that the family $\mu$ is weakly canonical of degree $k$ and size $d$.

Note that the formulas satisfied by $m^V_{E,a_W}$ in $\mathfrak{M}$ and the actual measure $\mu^V_{E,a_W}$ \emph{almost} line up: when $B$ is definable from parameters, if $\mu^V_{E,a_W}(B)<q$ then $\mathfrak{M}\vDash m^V_{E,a_W}(B)<q$, but if $\mathfrak{M}\vDash m^V_{E,a_W}(B)<q$ then we can only be sure that $\mu^V_{E,a_W}(B)\leq q$.

To see that the measures $\mu^V_{E,a_W}$ are actually canonical, it suffices to show that for each $B\in\mathcal{B}^0_V$ and $\mu^W_{E,\emptyset}$-almost every $x_W\in M^W$,
\[\mu^V_{E,x_W}(B(x_W))=\int\mu^{V_1}_{E,x_{V_0}\cup x_W}(B(x_W))\, d\mu^{V_0}_{E,x_W}.\]
Suppose not; then for some set $B$ definable from parameters, there is a set of $x_W$ of positive measure such that this equality fails.  It follows that for some rational $\delta>0$ there is a set $X_0$ of $x_W$ of positive measure such that 
\[\left|\mu^V_{E,x_W}(B(x_W))-\int\mu^{V_1}_{E,x_{V_0}\cup x_W}(B(x_W))\, d\mu^{V_0}_{E,x_W}\right|>\delta.\]
We need to approximate the integral in this definition closely enough by a formula to let us define a set of points where this violation occurs.  Consider the function $f_{x_W}(x_{V_0})=\mu^{V_1}_{E,x_{V_0}\cup x_W}(B(x_W))$.  We have $0\leq f_{x_W}(x_{V_0})\leq 1$. 

Integrals are not directly definable in our language, and there are many ways a function could have a given integral---for instance, by having a small number of points where the value is large, or a larger number of points where the value is smaller.  However we will show that there must be a set of positive measure where the functions $f_{x_W}$ not only all have nearly the same integral, but all these integrals can be finitely approximated using the same level sets.  This will allow us to write down a formula defining a set of points of positive measure, and with the property that every point satisfying this formula belongs to $X_0$.

  We may partition the interval $[0,1]$ into finitely many intervals $I_i=[\delta_i,\delta_{i+1})$ of size $<\delta/8$ and with rational endpoints.  Let us set $\Pi_i(x_W)=\{x_{V_0}\mid f_{x_W}(x_{V_0})\in I_i\}$ and $\pi_i(x_W)=\mu^{V_0}_{E,x_W}(\Pi_i(x_W))$, so when $x_W\in X_0$, $\sum_i\delta_i\pi_i(x_w)\leq\int f_{x_W}\,d\mu^{V_0}_{E,x_W}< \sum_i\delta_i\pi_i(x_W)+\delta/8$.

We choose $X_1\subseteq X_0$ of positive measure and, for each $i$, an interval $J_i=(\eta_i,\eta'_i)$ with rational end points such that $\pi_i(x_W)\in J_i$ for each $x_W\in X_1$ and 
\[\sum_i\delta_{i+1}\eta'_i<\sum_i\delta_i\eta_i+\delta/4.\]

Choose a rational $\sigma>0$ very small, and let
\begin{align*}
\Pi'_i(x_W)=\{x_{V_0}\mid \mathfrak{M}\vDash &\ m^{V_1}_{E,x_{V_0}\cup x_W}(B(x_W))<\delta_{i+1}\\
\wedge&\ m^{V_1}_{E,x_{V_0}\cup x_W}(B(x_W))>\delta_i-\sigma\}.
\end{align*}
Then $\Pi_i(x_W)\subseteq \Pi'_i(x_W)$ and $\Pi'_i(x_W)$ is definable.  Let $\pi'_i(x_W)=\mu^{V_0}_{E,x_W}(\Pi'_i(x_W))$.  By choosing $\sigma$ small enough, we may find a set $X_2\subseteq X_1$ of positive measure so that for $x_W\in X_2$, each $\pi'_i(x_W)\in J_i$ as well.

Now we may consider the set $\Theta$ of $x_W$ such that
\[\forall i \left(\mathfrak{M}\vDash m^{V_0}_{E,x_W}(\Pi'_i(x_W)) <\eta'_i\wedge  m^{V_0}_{E,x_W}(\Pi'_i(x_W))>\eta_i\right).\]
Note that $\Theta$ is definable from parameters and $X_2\subseteq\Theta$.

Consider any $x_W\in\Theta$, not necessarily in $X_2$.  Since each $\mu^{V_0}_{E,x_W}(\Pi'_i(x_W))\leq\eta'_i$,
\[\int\mu^{V_1}_{E,x_{V_0}\cup x_W}(B(x_W))\, d\mu^{V_0}_{E,x_W}\leq \sum_i\delta_{i+1}\eta'_i<\sum_i\delta_i\eta_i+\delta/4.\]
  On the other hand, since each $\mu^{V_0}_{E,x_W}(\Pi'_i(x_W))\geq\eta_i$,
\[\int\mu^{V_1}_{E,x_{V_0}\cup x_W}(B(x_W))\, d\mu^{V_0}_{E,x_W}\geq\sum_i(\delta_i-\sigma)\eta_i>\sum_i\delta_i\eta_i-\delta/4\]
 (since we chose $\sigma$ small enough).

So when $x_W\in\Theta$, we have
\[\sum_i(\delta_i-\sigma)\eta_i-\delta/4< \int\mu^{V_1}_{E,x_{V_0}\cup x_W}(B)\, d\mu^{V_0}_{E,x_W}<\sum_i\delta_i\eta_i+\delta/4.\]
Therefore when $x_W\in X_2\subseteq X_0\cap\Theta$, we must have either $\mu^{V}_{E,x_{W}}(B(x_W))< \sum_i\delta_i\eta_i-\delta/2$ or $\mu^{V}_{E,x_{W}}(B(x_W))> \sum_i\delta_i\eta_i+\delta/2$, and therefore
\begin{align*}
\mathfrak{M}\vDash &\left(m^V_{E,x_W}(B(x_W))<\sum_i\delta_i\eta_i-\delta/2\right)\\
\vee& \left(m^V_{E,x_W}(B(x_W))>\sum_i\delta_i\eta_i+\delta/2\right).
\end{align*}
Let $\psi$ be the conjunction of this formula with the formula defining $\Theta$.  Then we have $\mathfrak{M}\vDash\psi(x_W)$ whenever $x_W\in X_2$, and therefore $\mathfrak{M}\vDash  m^W_E(\psi)>\zeta$ for some $\zeta>0$.  We also have that whenever $\mathfrak{M}\vDash\psi(x_W)$, it is actually true that $\left|\mu^V_{E,x_W}(B(x_W))-\int\mu^{V_1}_{E,x_{V_0}\cup x_W}(B(x_W))\,d\mu^{V_0}_{E,x_W}\right|>\delta$.

Since the formula $ m^W_E(\psi)>\zeta$ holds in the ultraproduct, it also holds in infinitely many finite models.  But any finite model where this holds fails to satisfy the conclusion of Lemma \ref{thm:approx_fubini}, and therefore fails to be $\zeta,d$-ccc.  This contradicts the assumption that the finite models are $\delta_n,d$-ccc with $\delta_n\rightarrow 0$.
\item The third requirement follows immediately the \L o\'s Theorem: the formula $ m^{[1,k]}_{\{[1,k]\}}(A)\geq\epsilon$ holds in every finite model, and therefore in $\mathfrak{M}$ as well, and therefore $\mu^{[1,k]}_{\{[1,k]\}}(A)\geq\epsilon$.
\item Fortunately, the integral in this statement does not cause as much difficulty, since we do not need to deal with it uniformly in parameters.  Let $f=\sum\alpha_i\chi_{C_i}$.  Whenever $\left|\int f\chi_{B(a_W,x_{V_0},b_P)}\,d\mu^{V_1}_{E,x_{V_0}}\right|>\epsilon$ for some $\epsilon$, there is a formula holding of the parameters $a_W,x_{V_0},b_P$ which is a conjunction of components of the form
\[m^{V_1}_{E,x_{V_0}}(C_i(a_W,x_{V_0})\wedge B(a_W,b_P,x_{V_0}))<q\]
or negations of such components, and which implies that the integral is $\geq\epsilon$.  But then this formula holds in $\mathcal{U}$-almost every finite model, which means that we must have $\left|\int f\chi_{B(a_W,x_{V_0},\mathbf{max}_P^{E,V_0,f,\varphi}(a_W,x_{V_0}))}\,d\mu^{V_1}_{E,x_{V_0}}\right|\geq\epsilon$ in $\mathcal{U}$-almost every finite model (where $a_P$, etc., refer to the corresponding parameters in those finite models).  But then this formula also holds in $\mathfrak{M}$, so $\left|\int f\chi_{B(a_W,x_{V_0},\mathbf{max}_P^{E,V_0,f,\varphi}(a_W,x_{V_0}))}\,d\mu^{V_1}_{E,x_{V_0}}\right|\geq\epsilon$ in $\mathfrak{M}$.  Since this holds for every $\epsilon <\left|\int f\chi_{B(a_W,x_{V_0},b_P)}\,d\mu^{V_1}_{E,x_{V_0}}\right|$, it follows that
\[\left|\int f\chi_{B(a_W,x_{V_0},\mathbf{max}_P^{E,V_0,f,\varphi}(a_W,x_{V_0}))}\,d\mu^{V_1}_{E,x_{V_0}}\right|\geq\left|\int f\chi_{B(a_W,x_{V_0},b_P)}\,d\mu^{V_1}_{E,x_{V_0}}\right|.\]
\end{enumerate}
\end{proof}

\section{Uniformity Seminorms}\label{seminorms}

We give an outline of the remainder of our proof.  We will work in the setting established in the previous section---an infinite hypergraph together with a family of measures satisfying Fubini's theorem---and by our work in Section \ref{hypergraph_removal}, it will suffice to show that these measures have regularity.  In order to do this we will introduce a family of seminorms, the Gowers uniformity seminorms \cite{gowers:MR1844079}, which will correspond with the $\sigma$-algebras we introduced in Section \ref{sigma_algebras}.  (The connection between the Gowers seminorms and hypergraph regularity has been well-studied \cite{MR2373376,MR2426176,MR2195580}.  Infinitary versions were introduced by Host and Kra \cite{host05}, and have also been well-studied \cite{MR2920990,towsner09:gowers,2009arXiv0911.1157S,2009arXiv0903.0897S}.)

We want these seminorms to have the property that the seminorm ${||\cdot||_{U^{V,\mathcal{I}^\bot}_\infty(\mu^V_{E,a_P})}}$ corresponds to the $\sigma$-algebra $\mathcal{B}_{V,<V}$ in the sense that
\[||f||_{U^{V,\mathcal{I}^\bot}_\infty(\mu^V_{E,a_P})}=0\Leftrightarrow ||\mathbb{E}(f\mid\mathcal{B}_{V,\mathcal{I}})||_{L^2(\mu^V_{E,a_P})}=0.\]
We develop the seminorms in three stages: we define the \emph{principal seminorms}, which correspond to the principal $\sigma$-algebras $\mathcal{B}_{V,<V}$; the \emph{simple nonprincipal seminorms}, which correspond to the $\sigma$-algebras $\mathcal{B}_{V,<J}$ for $J\subsetneq V$; and the \emph{compound nonprincipal seminorms}, which correspond to the remaining $\sigma$-algebras.  The left to right implication is fairly easy to show (Theorem \ref{thm:easy_principal} for the principal seminorms and Theorem \ref{forwards} for nonprincipal seminorms).

We will call $U^V_\infty(\mu^V_{E,a_P})$ \emph{characteristic} when the right to left implication holds.  We will show that when the right seminorms are characteristic, the measure $\mu^V_{E,a_P}$ has regularity (Theorem \ref{char_implies_regular}), and therefore it will suffice to show that the seminorms are characteristic.
 
We define a special class of measures generalizing the dense setting---\emph{product measures}---and the structure of our argument is as follows (with all theorems assuming we have a canonical family of measures of sufficient size and degree):
\begin{enumerate}
\item Principal seminorms over a product measure are characteristic.  This argument is essentially standard; we give it in Theorem \ref{converse_product}.
\item \label{s2}Simple nonprincipal seminorms over a product measure are characteristic (Lemma \ref{converse_nonprincipal_base}).
\item \label{s3}All seminorms over a product measure are characteristic (Theorem \ref{converse_nonprincipal}).  This step proceeds inductively, using the inductive hypothesis with Theorem \ref{char_implies_regular} to show that the measure has $J$-regularity.
\item Principal seminorms over arbitrary measures are characteristic (Theorem \ref{converse_random}).
\item We now repeat (\ref{s2}) and (\ref{s3}) over an arbitrary measure, showing that all seminorms are characteristic.
\end{enumerate}

\subsection{Seminorms for Principal Algebras}\label{sec:principal_seminorms}
Fix disjoint sets $V,P$ and a $k$-uniform hypergraph $E\subseteq{{V\cup P}\choose k}$; let $m=|E\cap {P\choose k}|$ and let $\mu$ be a canonical family of measures of degree $k$ and size $\sum_{I\in E}2^{|I\cap V|}$.  Let $a_P$ be such that the measure $\mu^V_{E,a_P}$, and the measures we generate from it below, satisfy the appropriate Fubini properties.  (We will only work with a finite family of measures, so the set of such $a_P$ has $\mu^P_E$-measure $1$.)  To avoid repeating the background parameters $a_P$ over and over, we will write $\mu^V_E$ as an abbreviation for $\mu^V_{E,a_P}$ and $\mu^V_{E,x_W}$ as an abbreviation for $\mu^V_{E,x_W\cup a_P}$.

We wish to introduce the Gowers uniformity seminorms.  The basic idea is illustrated by the first non-trivial case: if $f(x_v,x_w)\in\mu^{\{v,w\}}_\emptyset$ then we have
\[||f||_{U^{\{v,w\}}_E}^4=\int f(x_v,x_{w})f(x_v,x_{w'})f(x_{v'},x_w)f(x_{v'},x_{w'})d\mu^{\{v,v',w,w'\}}_\emptyset.\]
We need to generalize this to the case where $f(x_v,x_w)\in\mu^{\{v,w\}}_{\{(v,w)\}}$; the correct choice is
\[||f||_{U^{\{v,w\}}_E}^4=\int f(x_v,x_{w})f(x_v,x_{w'})f(x_{v'},x_w)f(x_{v'},x_{w'})d\mu^{\{v,v',w,w'\}}_{\{(v,w),(v,w'),(v',w),(v',w')\}}.\]

We first need to define the general operation mapping a measure like $\mu^{\{v,w\}}_{\{(v,w)\}}$ to one like $\mu^{\{v,v',w,w'\}}_{\{(v,w),(v,w'),(v',w),(v',w')\}}$.
\begin{definition} 
For each $I\subseteq V$, we define $\mu^{V+I}_{E}=\mu^{(V\setminus I)\cup(I\times\{0,1\})}_{E^{V+I}}$ where $E^{V+I}$ is given as follows: for each $J\in E$ and each $\omega:J\cap I\rightarrow\{0,1\}$, there is an edge $J^\omega=(J\setminus I)\cup\{(i,\omega(i))\mid i\in J\cap I\}$.
\end{definition}
The graph $((V\setminus I)\cup(I\times\{0,1\}),E^{V+I})$ is the result of replacing the vertices $I$ with two identical copies of $I$.  In our example above, $\mu^{\{v,v',w,w'\}}_{\{(v,w),(v,w'),(v',w),(v',w')\}}=\mu^{\{v,w\}+\{v,w\}}_E$ (where, for greater generality, we have renamed $v$ to $(v,0)$, $v'$ to $(v,1)$, and similarly for $w,w'$).

Note that $\mu^{V+\emptyset}_{E}=\mu^V_{E}$.  For $i\in I$, $b\in \{0,1\}$, we write $x^b_i$ in place of $x_{(i,b)}$; for instance, we write
\[\int f( x_{V\setminus I}, x_I^0, x_I^1)\,d\mu^{V+I}_{E}\]
where the variables being integrated over are exactly the ones displayed.  If $\omega:I\rightarrow\{0,1\}$, we write $x^\omega_I$ for the tuple $x^{\omega}_I(i)=x^{\omega(i)}_i$.

Note that we chose the size of our measure to be $\sum_{I\in E}2^{|I\cap V|}$ because this is precisely the size needed to ensure Fubini properties for $\mu^{V+V}_E$.

\begin{definition} 
Let $f:M^V\rightarrow\mathbb{R}$ be an $L^\infty(\mu^V_E)$, $\mathcal{B}_V$-measurable function with $|V|=n$.  Define $\|\cdot\|_{U^V_\infty(\mu^{V}_{E})}$ by:
\[\|f\|_{U^V_\infty(\mu^V_{E})}=\left(\int \prod_{\omega\in\{0,1\}^V}f( x^\omega_V)\,d\mu^{V+V}_{E}\right)^{2^{-n}}.\]
\end{definition}

Whenever we refer to the norm $\|f\|_{U^V_\infty(\mu^V_E)}$, we assume that $f$ is $L^\infty(\mu^V_E)$ and $\mathcal{B}_V$-measurable.

We have to check that the expression under the radical is non-negative.  We actually prove the following stronger lemma, which will be useful later.
\begin{lemma}
If $f$ is an $L^\infty(\mu^V_{E})$ function and $B$ is $\mathcal{B}_{V,I}$-measurable for some $I\subsetneq V$ then
\[0\leq \int \prod_{\omega\in\{0,1\}^V}f( x^\omega_V)\chi_{B}(x^\omega_V)\,d\mu^{V+V}_{E}\leq \int \prod_{\omega\in\{0,1\}^V}f( x^\omega_V)\,d\mu^{V+V}_{E}.\]
\label{gowers_wf}
\end{lemma}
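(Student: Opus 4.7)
The plan is to carry out a single Gowers--Cauchy--Schwarz--style step in any coordinate $j \in V \setminus I$, which exists because $I \subsetneq V$. The idea is that both halves of the Gowers cube $\{0,1\}^V$ pair up along the $j$-axis, and since $B$ does not depend on the coordinate $j$, the factor $\chi_B$ survives only as a pointwise bound by $1$, while the rest of the integrand collapses into a square.

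First I would set $V = \{j\} \sqcup V'$, parametrize $\omega \in \{0,1\}^V$ as $(\omega(j), \omega')$ with $\omega' \in \{0,1\}^{V'}$, and rewrite
\[\prod_{\omega \in \{0,1\}^V} f(x^\omega_V)\chi_B(x^\omega_V) = \prod_{\omega' \in \{0,1\}^{V'}} f(x_j^0, x^{\omega'}_{V'})\, f(x_j^1, x^{\omega'}_{V'})\, \chi_B(x^{\omega'|_I}_I),\]
where I have used that $\chi_B(x^\omega_V)$ depends only on $\omega|_I$ (since $B \in \mathcal{B}_{V,I}$), and that $\chi_B^2 = \chi_B$ so the two copies over $\omega(j) \in \{0,1\}$ of the $\chi_B$ factor merge into one. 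Crucially the remaining $\chi_B$ factor is a function of the variables $Y := (x_i^b)_{(i,b) \in V+V \setminus \{(j,0),(j,1)\}}$ alone.

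Next I would invoke the Fubini property guaranteed by canonicity of $\mu$ to integrate $x_j^0$ and $x_j^1$ before the remaining variables. Inspection of $E^{V+V}$ shows that no edge contains both $(j,0)$ and $(j,1)$, so by weak canonicity clause (2) together with the Fubini clause, the conditional measure factors as an independent product over $(x_j^0, x_j^1)$ given $Y$; by clause (3), the transposition $(j,0) \leftrightarrow (j,1)$ preserves $E^{V+V}$ and hence the measure, so both conditional integrals equal the same function
\[h(Y) := \int \prod_{\omega' \in \{0,1\}^{V'}} f(x, x^{\omega'}_{V'})\, d\mu(x \mid Y).\]
Assembling these pieces gives
\[\int \prod_\omega f(x^\omega_V)\chi_B(x^\omega_V)\, d\mu^{V+V}_E = \int h(Y)^2 \prod_{\omega' \in \{0,1\}^{V'}} \chi_B(x^{\omega'|_I}_I)\, d\mu(Y),\]
which is manifestly nonnegative since the integrand is pointwise nonnegative, proving the lower bound.

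For the upper bound I would use that $\prod_{\omega'} \chi_B(x^{\omega'|_I}_I) \leq 1$ pointwise, so the right-hand side above is bounded by $\int h(Y)^2\, d\mu(Y)$; reversing the Fubini/symmetry computation with $\chi_B$ replaced by $1$ recognizes this as $\int \prod_\omega f(x^\omega_V)\, d\mu^{V+V}_E$. The main obstacle is really just a bookkeeping one: verifying that the doubling construction $E^{V+V}$ places no edge on the pair $\{(j,0),(j,1)\}$ and that the chosen degree bound $\sum_{I \in E} 2^{|I \cap V|}$ is precisely what is needed so that canonicity of $\mu$ yields the conditional independence and transposition symmetry used above. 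Once that is in hand, the inequality falls out of the single squaring step, with no iteration required.
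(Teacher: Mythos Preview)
Your argument is correct and shares the paper's core mechanism: pick a coordinate $j\in V\setminus I$, use Fubini to integrate out $x_j^0,x_j^1$ first (noting that no edge of $E^{V+V}$ contains both $(j,0)$ and $(j,1)$), and recognize the inner integral as a square. Where you differ is in how the upper bound is obtained. The paper first reduces to $|I|=|V|-1$, then expands $f=f\chi_B+f\chi_{\overline B}$ inside the product to get $2^{2^{|V|}}$ cross terms; it shows each cross term is either zero (when the choice of $B$ versus $\overline B$ is inconsistent along the $j$-axis) or a square, hence nonnegative, so the full sum $\int\prod_\omega f(x^\omega_V)\,d\mu^{V+V}_E$ dominates the single term with all factors $\chi_B$. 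You instead keep the $\chi_B$ factors together, observe that after squaring they sit outside $h(Y)^2$ as a factor $\prod_{\omega'}\chi_B\le 1$, and bound pointwise. Your route is shorter and avoids both the reduction to $|I|=|V|-1$ and the combinatorial case split on the $S_\omega$; the paper's route, on the other hand, yields the slightly stronger statement that every individual cross term in the $\chi_B/\chi_{\overline B}$ expansion is nonnegative. For the lemma as stated, your version suffices.
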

\begin{proof} 
It suffices to show the claim in the case when $|I|=|V|-1$.  Since $f=f\chi_B+f\chi_{\overline{B}}$, we have
\[\int \prod_{\omega\in\{0,1\}^V}f( x^\omega_V)\,d\mu^{V+V}_{E}=\int \prod_{\omega\in\{0,1\}^V}\left[(f\chi_B)( x^\omega_V)+(f\chi_{\overline{B}})(x^\omega_V)\right]\,d\mu^{V+V}_{E}.\]
Expanding the product gives a sum of $2^{2^n}$ terms of the form
\[\int \prod_{\omega\in\{0,1\}^V}(f\chi_{S_\omega})( x^\omega_V)\,d\mu^{V+V}_{E}\]
where each $S_\omega$ is either $\chi_B$ or $\chi_{\overline{B}}$.  We will show that each of these terms is non-negative.  Since $\int \prod_{\omega\in\{0,1\}^V}f( x^\omega_V)\chi_{B}(x^\omega_V)\,d\mu^{V+V}_{E}$ is one of these terms, both inequalities follow.

Note that $\chi_{S_\omega}(x^\omega_V)$ depends only on $x^\omega_I$.  In particular, if there are any $\omega,\omega'\in\{0,1\}^V$ such that $\omega(i)=\omega'(i)$ for all $i\in I$ but $S_\omega\neq S_{\omega'}$, then for any $x^0_V\cup x^1_V$, $\chi_{S_\omega}(x^\omega_V)=\chi_{S_\omega}(x^\omega_I)=\chi_{S_\omega}(x^{\omega'}_I)\neq \chi_{S_{\omega'}}(x^{\omega'}_I)=\chi_{S_{\omega'}}(x^{\omega'}_V)$.  In particular, one of these two values must be $0$, so the whole product is $0$.

So we may restrict to the case where $S_\omega$ depends only on $\omega\upharpoonright I$.  Let $v$ be the unique element in $V\setminus I$ and let $E'=E\upharpoonright{I\choose k}$.  Then we have the decomposition
\[\int\cdot\, d\mu^{V+V}_E=\iint\cdot\, d\mu^{v+v}_{E,x^0_I\cup x^1_I}d\mu^{I+I}_{E'}=\iiint \cdot\, d\mu^v_{E,x^0_I\cup x^1_I}d\mu^v_{E,x^0_I\cup x^1_I}d\mu^{I+I}_{E'}.\]
The second equality holds because the graph $E^{V+V}$ used to defined the measure $\mu^{V+V}_E$ does not contain any edges containing both $(v,0)$ and $(v,1)$.  So we have
\begin{align*}
& \int \prod_{\omega\in\{0,1\}^V}(f\chi_{S_\omega})( x_V^\omega)\, d\mu^{V+V}_{E}\\
& = \iint \prod_{\omega\in\{0,1\}^V}(f\chi_{S_\omega})( x_V^\omega)\, d\mu^v_{E,x^0_I\cup x^1_I}d\mu^v_{E,x^0_I\cup x^1_I}d\mu^{I+I}_{E'}\\
& =\iint \prod_{\omega\in\{0,1\}^{I}}f\chi_{S_\omega}( x_{I}^\omega,x_v^0)\prod_{\omega\in\{0,1\}^{I}}f\chi_{S_\omega}( x_{I}^\omega,x_v^1)\,d\mu^v_{E,x^0_I\cup x^1_I}d\mu^v_{E,x^0_I\cup x^1_I}d\mu^{I+I}_{E'}\\
& =\int \left(\int \prod_{\omega\in\{0,1\}^{I}}f\chi_{S_\omega}( x_{I}^\omega,x_v)\,d\mu^v_{E,x^0_I\cup x^1_I}\right)^2d\mu^{I+I}_{E'}\\
 \end{align*}
Since the inside of the integral is always non-negative, this term is non-negative.
\end{proof}

In particular, since $\int \prod_{\omega\in\{0,1\}^V}f( x^\omega_V)\,d\mu^{V+V}_{E}\geq 0$, $\|f\|_{U^V_\infty(\mu^V_E)}$ is defined.

Next we want a Cauchy-Schwarz style inequality for these seminorms:
\begin{lemma}[Gowers-Cauchy-Schwarz]
  Suppose that for each $\omega\in\{0,1\}^V$, $f_\omega$ is an $L^\infty(\mu^V_E)$ function.  Then
\[\left|\int \prod_{\omega\in\{0,1\}^V}f_\omega(x^\omega_V)\,d\mu^{V+V}_E\right|\leq \prod_{\omega\in\{0,1\}^V}\|f_\omega\|_{U^V_\infty(\mu^V_E)}.\]
\end{lemma}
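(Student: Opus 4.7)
The plan is a standard coordinate-by-coordinate Cauchy--Schwarz argument, carried out $n = |V|$ times: at each step one uses the Fubini property of the canonical family to peel off one coordinate of $V$, applies the ordinary Cauchy--Schwarz inequality with respect to the ``doubled'' measure on the remaining coordinates, and observes that the two resulting factors are integrals of the same form as the original but with all $f_\omega$'s replaced by copies of the $f_\omega$'s having fixed value in the peeled coordinate.

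Concretely, fix a coordinate $v \in V$ and set $I = V \setminus \{v\}$, $E' = E \upharpoonright \binom{I \cup P}{k}$. Using the canonicity/Fubini properties (and the fact that $E^{V+V}$ contains no edge meeting both $(v,0)$ and $(v,1)$, exactly as in the proof of Lemma~\ref{gowers_wf}), decompose
\[\int \prod_{\omega \in \{0,1\}^V} f_\omega(x^\omega_V)\, d\mu^{V+V}_E = \int F_0(x^0_I, x^1_I)\, F_1(x^0_I, x^1_I)\, d\mu^{I+I}_{E'},\]
where for $b \in \{0,1\}$,
\[F_b(x^0_I, x^1_I) = \int \prod_{\sigma \in \{0,1\}^I} f_{(\sigma, b)}(x^\sigma_I, x_v)\, d\mu^{v}_{E, x^0_I \cup x^1_I}.\]
Applying Cauchy--Schwarz in $L^2(\mu^{I+I}_{E'})$ and squaring $F_b$ (which, by another use of the independence of the two $v$-copies, re-introduces a second $v$-coordinate) yields
\[\left|\int \prod_\omega f_\omega(x^\omega_V)\, d\mu^{V+V}_E\right|^{2} \leq \prod_{b \in \{0,1\}} \int \prod_{\omega \in \{0,1\}^V} f_{(\omega|_I, b)}(x^\omega_V)\, d\mu^{V+V}_E.\]
Each factor on the right is of exactly the same shape as the left-hand side, but now all $f_\omega$'s with a given restriction to $I$ have been identified.

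Iterating this move once for each of the $n$ coordinates of $V$, at the end we obtain
\[\left|\int \prod_\omega f_\omega(x^\omega_V)\, d\mu^{V+V}_E\right|^{2^n} \leq \prod_{\omega \in \{0,1\}^V} \int \prod_{\omega' \in \{0,1\}^V} f_\omega(x^{\omega'}_V)\, d\mu^{V+V}_E = \prod_{\omega \in \{0,1\}^V} \|f_\omega\|_{U^V_\infty(\mu^V_E)}^{2^n},\]
and taking $2^n$-th roots gives the desired inequality. (The integrals on the right are non-negative by Lemma~\ref{gowers_wf}, so the $2^n$-th roots are real.)

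The only slightly delicate point is bookkeeping at the inductive step, namely verifying that after squaring $F_b$ and re-interpreting the product of the two one-variable integrals as a single integral against $\mu^{v+v}_{E, x^0_I \cup x^1_I}$, the overall integrand really is $\prod_{\omega} f_{(\omega|_I, b)}(x^\omega_V)$ against $\mu^{V+V}_E$. This is precisely the decomposition shown in the proof of Lemma~\ref{gowers_wf}, and it is the reason the degree of $\mu$ was chosen to be $\sum_{I \in E} 2^{|I \cap V|}$---just large enough that every $\mu^{V+V}_E$ and the intermediate conditional measures lie within the canonical family and hence enjoy Fubini.
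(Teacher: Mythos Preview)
Your proposal is correct and follows essentially the same route as the paper: peel off one coordinate $v$, use the Fubini decomposition (relying on the fact that $E^{V+V}$ has no edge through both copies of $v$) to write the integral as $\int F_0 F_1\, d\mu^{I+I}_E$, apply ordinary Cauchy--Schwarz, recombine each squared factor back into a $\mu^{V+V}_E$-integral with the $f_\omega$'s identified along the $v$-coordinate, and iterate over all coordinates of $V$. The paper's proof is the same argument written out with the same decomposition and the same appeal to the non-negativity established in Lemma~\ref{gowers_wf}.
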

\begin{proof}
Fix some $v\in V$, and let $I=V\setminus\{v\}$.  Note that we have the decomposition
\[\int\cdot\, d\mu^{V+V}_E=\iint \cdot\, d\mu^{v+v}_{E,x^0_{I}\cup x^1_{I}}d\mu^{I+I}_E=\iiint \cdot\, d\mu^v_{E,x^0_{I}\cup x^1_{I}}d\mu^v_{E,x^0_{I}\cup x^1_{I}}d\mu^{I+I}_E.\]
As above, the second equality holds because the graph in $\mu^{V+V}_E$ does not contain any edges containing both $(v,0)$ and $(v,1)$.  For $\omega\in\{0,1\}^{I}$ and $b\in\{0,1\}$, let us write $\omega b$ for the element of $\{0,1\}^V$ given by $(\omega b)(i)=\omega(i)$ if $i\in I$ and $(\omega b)(i)=b$ if $i=v$.  Therefore, using Cauchy-Schwarz, we have:
  \begin{align*}
 &   \left|\int \prod_{\omega\in\{0,1\}^V}f_\omega(x^\omega_V)\,d\mu^{V+V}_E\right|^2\\
 &=   \left|\int\left(\int \prod_{\omega\in\{0,1\}^V}f_\omega(x^\omega_{I},x^{\omega(v)}_v)\,d\mu^{v+v}_{E,x^0_{I}\cup x^1_{I}}\right)d\mu^{I+I}_E\right|^2\\
 &=   \left|\int\left(\int \prod_{\omega\in\{0,1\}^{I}}f_{\omega0}(x^\omega_{I},x^0_v)\,d\mu^{v}_{E,x^0_{I}\cup x^1_{I}}\right)\left(\int \prod_{\omega\in\{0,1\}^{I}}f_{\omega1}(x^\omega_{I},x^1_v)\,d\mu^{v}_{E,x^0_{I}\cup x^1_{I}}\right)d\mu^{I+I}_E\right|^2\\
 &\leq \int \left(\int \prod_{\omega\in\{0,1\}^{I}}f_{\omega0}(x^\omega_{I},x_v)\,d\mu^{v}_{E,x^0_{I}\cup x^1_{I}}\right)^2d\mu^{I+I}_E\int \left(\int \prod_{\omega\in\{0,1\}^{I}}f_{\omega1}(x^\omega_{I},x_v)d\mu^{v}_{E,x^0_{I}\cup x^1_{I}}\right)^2\,d\mu^{I+I}_E\\
 &\leq \int\prod_{\omega\in\{0,1\}^{V}}f_{(\omega\upharpoonright I)0}(x^\omega_V)\,d\mu^{V+V}_E\int\prod_{\omega\in\{0,1\}^{V}}f_{(\omega\upharpoonright I)1}(x^\omega_V)\,d\mu^{V+V}_E\\
  \end{align*}

In particular, applying this repeatedly to each coordinate in $V$, we have
\begin{align*}
\left|\int \prod_{\omega\in\{0,1\}^V}f_\omega(x^\omega_V)\,d\mu^{V+V}_E\right|^{2^V}
&\leq \prod_{\omega\in\{0,1\}^V}\int \prod_{\omega'\in\{0,1\}^V}f_\omega(x^{\omega'}_V)\,d\mu^{V+V}_E\\
&=\prod_{\omega\in\{0,1\}^V} \|f_\omega\|^{2^V}_{U^V_\infty(\mu^V_E)}.
\end{align*}
\end{proof}

\begin{cor}
  $\left|\int f\,d\mu^V_E\right|\leq \|f\|_{U^V_\infty(\mu^V_E)}$.
\end{cor}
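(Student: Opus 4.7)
The plan is to obtain this corollary as an immediate consequence of the Gowers-Cauchy-Schwarz inequality, by making a judicious choice of the $2^{|V|}$ functions $f_\omega$. Specifically, I would pick one distinguished vertex of the hypercube $\{0,1\}^V$, say $\omega_0 = 0^V$, set $f_{\omega_0} = f$, and set $f_\omega = 1$ (the constant function) for all $\omega \ne \omega_0$. The Gowers-Cauchy-Schwarz inequality then gives
\[
\left|\int f(x^{\omega_0}_V) \, d\mu^{V+V}_E\right| \leq \|f\|_{U^V_\infty(\mu^V_E)} \prod_{\omega \ne \omega_0} \|1\|_{U^V_\infty(\mu^V_E)},
\]
and the corollary will follow from two routine identifications.

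First, observe that $\|1\|_{U^V_\infty(\mu^V_E)} = 1$, since the integrand in the definition of this seminorm becomes identically $1$ and $\mu^{V+V}_E$ is a probability measure. So the right-hand side collapses to $\|f\|_{U^V_\infty(\mu^V_E)}$.

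Second, I need to see that the left-hand side equals $|\int f \, d\mu^V_E|$. Partition the vertex set $\{0,1\} \times V$ of $\mu^{V+V}_E$ into $V_0' = \{0\} \times V$ and $V_1' = \{1\} \times V$, and apply the Fubini property of a canonical family from the definition in Section \ref{families}:
\[
\int f(x^0_V) \, d\mu^{V+V}_E = \iint f(x^0_V) \, d\mu^{V_1'}_{E^{V+V}, x^0_V} \, d\mu^{V_0'}_{E^{V+V}}.
\]
The integrand is constant in $x^1_V$, so the inner integral reduces to $f(x^0_V)$. Moreover, the only edges of $E^{V+V}$ lying inside $V_0'$ are those arising from $\omega \equiv 0$, which are exactly the original edges of $E$ (identifying $V_0'$ with $V$ in the natural way); hence $\mu^{V_0'}_{E^{V+V}} = \mu^V_E$. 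This gives $\int f(x^0_V) \, d\mu^{V+V}_E = \int f \, d\mu^V_E$, as required.

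The only point requiring any care is the Fubini reduction, which depends on having chosen the degree of the canonical family large enough for $\mu^{V+V}_E$ to be well-defined and to satisfy Fubini; this was arranged at the start of the section by setting the degree to $\sum_{I \in E} 2^{|I \cap V|}$. Combining the two reductions yields exactly $|\int f \, d\mu^V_E| \leq \|f\|_{U^V_\infty(\mu^V_E)}$.
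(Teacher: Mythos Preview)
Your proof is correct and follows exactly the same approach as the paper, which simply says to take $f_{0^V}=f$ and $f_\omega=1$ for $\omega\neq 0^V$ in the Gowers--Cauchy--Schwarz inequality. You have spelled out the two routine identifications (that $\|1\|_{U^V_\infty}=1$ and that $\int f(x^0_V)\,d\mu^{V+V}_E=\int f\,d\mu^V_E$ via Fubini and the bijection property of the canonical family) which the paper leaves implicit.
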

\begin{proof}
  In the previous lemma, take $f_{0^V}=f$ and $f_{\omega}=1$ for $\omega\neq 0^V$.
\end{proof}

\begin{lemma}
  $\|\cdot\|_{U^V_\infty(\mu^V_E)}$ is a seminorm.
\end{lemma}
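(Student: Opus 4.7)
The plan is to verify the three seminorm axioms: non-negativity, homogeneity, and the triangle inequality. Non-negativity was essentially already established in Lemma \ref{gowers_wf}: taking $B = M^V$ (or applying the lemma directly) shows that $\int \prod_{\omega\in\{0,1\}^V} f(x^\omega_V)\,d\mu^{V+V}_E \geq 0$, so the $2^{-n}$-th root is well-defined and non-negative. Homogeneity is immediate from the definition: scaling $f$ by a constant $c$ pulls a factor of $c^{2^n}$ outside the integral (there are $2^n$ copies of $f$ in the product), and after taking the $2^{-n}$-th root of the absolute value we recover $|c|\cdot ||f||_{U^V_\infty(\mu^V_E)}$.

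The main work is the triangle inequality, which I would deduce from the Gowers-Cauchy-Schwarz inequality proved just above. First I would expand
\[
||f+g||^{2^n}_{U^V_\infty(\mu^V_E)} = \int \prod_{\omega\in\{0,1\}^V} (f+g)(x^\omega_V)\,d\mu^{V+V}_E
\]
by multilinearity into a sum of $2^{2^n}$ terms, one for each choice function $\eta:\{0,1\}^V \to \{f,g\}$, each term of the form $\int \prod_{\omega} \eta(\omega)(x^\omega_V)\,d\mu^{V+V}_E$. Applying Gowers-Cauchy-Schwarz to each such term with $f_\omega = \eta(\omega)$ bounds it by $\prod_\omega ||\eta(\omega)||_{U^V_\infty(\mu^V_E)}$.

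Summing over all $\eta$, the right-hand side factors as $(||f||_{U^V_\infty(\mu^V_E)} + ||g||_{U^V_\infty(\mu^V_E)})^{2^n}$ by the binomial theorem applied to the $2^n$ independent coordinates $\omega$. Hence
\[
||f+g||^{2^n}_{U^V_\infty(\mu^V_E)} \leq \bigl(||f||_{U^V_\infty(\mu^V_E)} + ||g||_{U^V_\infty(\mu^V_E)}\bigr)^{2^n},
\]
and taking $2^{-n}$-th roots yields the triangle inequality.

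The only subtle point is a sanity check that the Gowers-Cauchy-Schwarz bound really applies to each of the $2^{2^n}$ mixed terms — but this is automatic since that lemma was stated for arbitrary $L^\infty(\mu^V_E)$ functions $f_\omega$, and both $f$ and $g$ qualify. I do not expect any obstacle here; the argument is the standard derivation of the Gowers $U^k$-seminorm property from its Cauchy-Schwarz companion, adapted to this family-of-measures setting, and the nontrivial work (the Fubini decomposition used to prove Gowers-Cauchy-Schwarz) has already been done.
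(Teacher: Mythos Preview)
Your proposal is correct and follows essentially the same approach as the paper: homogeneity is immediate, and subadditivity is obtained by expanding $||f+g||^{2^{|V|}}_{U^V_\infty(\mu^V_E)}$ into $2^{2^{|V|}}$ terms, bounding each via Gowers--Cauchy--Schwarz, and recognizing the resulting sum as the binomial expansion of $\bigl(||f||_{U^V_\infty(\mu^V_E)}+||g||_{U^V_\infty(\mu^V_E)}\bigr)^{2^{|V|}}$. The paper's proof is terser but otherwise identical.
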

\begin{proof}
It is easy to see from the definition that $\|cf\|_{U^V_\infty(\mu^V_E)}=|c|\cdot\|f\|_{U^V_\infty(\mu^V_E)}$.  To see subadditivity, observe that $\|f+g\|_{U^V_\infty(\mu^V_E)}^{2^{|V|}}$ expands to a sum of $2^{2^{|V|}}$ integrals, each of which, by the previous lemma, is bounded by $\|f\|_{U^V_\infty(\mu^V_E)}^m\|g\|_{U^V_\infty(\mu^V_E)}^{2^{|V|}-m}$ for a suitable $m$.  In particular, this bound is precisely $\left(\|f\|_{U^V_\infty(\mu^V_E)}+\|g\|_{U^V_\infty(\mu^V_E)}\right)^{2^{|V|}}$ as desired.
\end{proof}

The work above gives:
\begin{theorem}\label{thm:easy_principal}
  If $\|\mathbb{E}(f\mid\mathcal{B}_{V,<V})\|_{L^2(\mu^V_E)}>0$ then $\|f\|_{U^V_\infty(\mu^V_E)}>0$.
\end{theorem}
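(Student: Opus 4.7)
The plan is to prove the contrapositive: assuming $||f||_{U^V_\infty(\mu^V_E)}=0$, I will show that $g:=\mathbb{E}(f\mid\mathcal{B}_{V,<V})=0$ in $L^2(\mu^V_E)$. Since $f\in L^\infty$ we have $g\in L^\infty\subseteq L^2$, so it suffices to prove that $\int fh\,d\mu^V_E=0$ for $h$ in a subset whose linear span is $L^2$-dense in $L^2(\mathcal{B}_{V,<V})$; setting $h:=g$ then forces $||g||_{L^2}^2=0$.

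The main step is to apply the Gowers--Cauchy--Schwarz lemma to arbitrary product functions
\[h(x_V)=\prod_{v\in V}h_v(x_{V\setminus\{v\}}),\quad h_v\in L^\infty(\mathcal{B}_{V,V\setminus\{v\}}).\]
For each $v\in V$ let $\omega^v\in\{0,1\}^V$ be the indicator vector of $\{v\}$. In Gowers--Cauchy--Schwarz, take $f_{0^V}:=f$, take $f_{\omega^v}:=h_v$ (regarded as a function on $M^V$ that ignores its $v$-th coordinate), and take $f_\omega:=1$ for all other $\omega$. Since $\omega^v$ vanishes on $V\setminus\{v\}$, the evaluation $f_{\omega^v}(x_V^{\omega^v})=h_v(x^0_{V\setminus\{v\}})$, so
\[\prod_{\omega\in\{0,1\}^V}f_\omega(x_V^\omega)=f(x^0_V)\cdot\prod_{v\in V}h_v(x^0_{V\setminus\{v\}})=f(x^0_V)\,h(x^0_V),\]
a function of the copy-$0$ coordinates alone. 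The Fubini property of the canonical family $\mu$ (together with the relabeling axiom) identifies the marginal of $\mu^{V+V}_E$ on those coordinates with $\mu^V_E$, whence
\[\int\prod_\omega f_\omega(x_V^\omega)\,d\mu^{V+V}_E=\int fh\,d\mu^V_E.\]
Gowers--Cauchy--Schwarz bounds the left-hand side in absolute value by $||f||_{U^V_\infty}\prod_v||h_v||_{U^V_\infty}$ (the trivial factors $||1||_{U^V_\infty}=1$ contribute nothing); this vanishes because $||f||_{U^V_\infty}=0$ and each $||h_v||_{U^V_\infty}$ is finite.

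To complete the argument, every element of $\mathcal{B}^0_{V,<V}$ is a finite disjoint union of intersections $\bigcap_{v\in V}B_v$ with $B_v\in\mathcal{B}^0_{V,V\setminus\{v\}}$, and the indicator of such an intersection is the product $\prod_v\chi_{B_v}$ already treated above. Hence $\int fh\,d\mu^V_E=0$ extends by linearity to every simple function of $\mathcal{B}^0_{V,<V}$ and, by the $L^2$-density of simple functions of a generating algebra, to every $h\in L^2(\mathcal{B}_{V,<V})$; taking $h:=g$ yields the required contradiction. The main obstacle is the first step: finding the placement of $h_v$ at the indicator vectors $\omega^v$ (and $1$ elsewhere) so that the doubled integrand collapses to $fh$ on a single copy. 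Once this is recognised, the remainder is Gowers--Cauchy--Schwarz, Fubini for the canonical family, and standard $L^2$-approximation.
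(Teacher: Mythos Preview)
Your proof is correct and follows essentially the same strategy as the paper: both reduce to the inequality $\left|\int f\prod_{I}\chi_{B_I}\,d\mu^V_E\right|\leq C\cdot||f||_{U^V_\infty(\mu^V_E)}$ for products of lower-order factors, which forces the conditional expectation to vanish when $||f||_{U^V_\infty}=0$. The only difference is in packaging: the paper first uses the corollary $|\int g|\leq||g||_{U^V_\infty}$ on $g=f\prod_I\chi_{B_I}$ and then invokes Lemma~\ref{gowers_wf} to strip off each $\chi_{B_I}$, whereas you place the $h_v$'s at the weight-one vertices $\omega^v$ of the cube and read the bound off directly from Gowers--Cauchy--Schwarz, thereby bypassing Lemma~\ref{gowers_wf}.
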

\begin{proof}
If $\|\mathbb{E}(f\mid\mathcal{B}_{V,<V})\|_{L^2(\mu^V_E)}>0$ then we may find, for each $I\subseteq V$ with $|I|=|V|-1$, $B_I\in\mathcal{B}_{V,I}$ such that
\[0<\left|\int f\prod_I \chi_{B_I}\,d\mu^V_E\right|\leq \|f\prod_I\chi_{B_I}\|_{U^V_\infty(\mu^V_E)}.\]
By repeatedly applying Lemma \ref{gowers_wf}, once to each $I$, we have 
\[0<\|f\prod_I\chi_{B_I}\|_{U^V_\infty(\mu^V_E)}\leq \|f\|_{U^V_\infty(\mu^V_E)}.\]
\end{proof}

We will obtain the converse, which will show that $\|\mathbb{E}(f\mid\mathcal{B}_{V,<V})\|_{L^2(\mu^V_E)}>0$ iff $\|f\|_{U^V_\infty(\mu^V_E)}>0$, and in particular will enable us to show that $\mu$ has $J$-regularity.


\begin{definition}
  We say $\mu^V_E$ is a \emph{product measure} if no element of $E$ contains more than one element of $V$.
\end{definition}
(Recall that $\mu^V_E$ abbreviates $\mu^V_{E,a_P}$, so there may still be edges in $E$ connecting elements of $V$ to elements of $P$.)  We call such measures product measures because they are extensions of the ordinary product measure $\prod_{v\in V}\mu^v_E$.

\begin{theorem}\label{converse_product}
If $\mu^V_E$ is a product measure, and $\|\mathbb{E}(f\mid\mathcal{B}_{V,<V})\|_{L^2(\mu^V_E)}=0$ then $\|f\|_{U^V_\infty(\mu^V_E)}=0$.
\end{theorem}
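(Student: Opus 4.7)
My plan is to prove the theorem by induction on $n = |V|$, reducing to a suitable product-measure decomposition of the Gowers integral.

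For the base case $n = 1$, the $\sigma$-algebra $\mathcal{B}_{V,<V}$ is trivial, so the hypothesis reads $\int f\, d\mu^V_E = 0$. Since $V$ is a singleton and $E$ contains no edge with two distinct copies of $v$, the product factorization $\mu^{V+V}_E = \mu^V_E \otimes \mu^V_E$ gives
\[\|f\|_{U^V_\infty(\mu^V_E)}^2 = \int f(x^0)\, f(x^1)\, d\mu^V_E(x^0)\, d\mu^V_E(x^1) = \Bigl(\int f\, d\mu^V_E\Bigr)^2 = 0.\]

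For the inductive step, pick $v \in V$ and set $W = V \setminus \{v\}$. Because $\mu^V_E$ is a product measure, no edge of $E^{V+V}$ connects the two copies of $v$ to each other or to any vertex of $W$, so canonicity yields the factorization $\mu^{V+V}_E = \mu^v_E \otimes \mu^v_E \otimes \mu^{W+W}_E$. Grouping the $2^n$ factors of the Gowers integrand by the value of $\omega(v)$ and integrating over the two copies of $v$ first, I obtain
\[\|f\|_{U^V_\infty(\mu^V_E)}^{2^n} = \int h(x^0_W, x^1_W)^2\, d\mu^{W+W}_E,\]
where $h(x^0_W, x^1_W) := \int \prod_{\omega' \in \{0,1\}^W} f(y, x^{\omega'}_W)\, d\mu^v_E(y)$. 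Expanding $h^2$ as a double integral over two independent copies $y_0, y_1$ of the $v$-variable and rearranging with Fubini yields the equivalent form
\[\|f\|_{U^V_\infty(\mu^V_E)}^{2^n} = \int\!\!\int \|f(y_0, \cdot)\, f(y_1, \cdot)\|_{U^W_\infty(\mu^W_E)}^{2^{n-1}}\, d\mu^v_E(y_0)\, d\mu^v_E(y_1).\]
Since the integrand is non-negative, $\|f\|_{U^V_\infty} = 0$ is equivalent to $\|f(y_0, \cdot)\, f(y_1, \cdot)\|_{U^W_\infty(\mu^W_E)} = 0$ for $\mu^v_E \otimes \mu^v_E$-almost every $(y_0, y_1)$. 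Writing $g_{y_0, y_1}(x_W) := f(y_0, x_W)\, f(y_1, x_W)$ and invoking the inductive hypothesis on $W$, this reduces to verifying $\mathbb{E}(g_{y_0, y_1} \mid \mathcal{B}_{W, <W}) = 0$ for almost every $(y_0, y_1)$.

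The crucial step, and the main obstacle, is deducing this pointwise-in-$(y_0, y_1)$ orthogonality from the single integrated hypothesis $\mathbb{E}(f \mid \mathcal{B}_{V, <V}) = 0$: naively testing against $\Phi_{y_1}(x_V) = f(y_1, x_W)\,\phi(x_W)$ (for $\phi \in L^\infty(\mathcal{B}_{W,<W})$), which is $\mathcal{B}_{V,W}$-measurable and hence in $L^\infty(\mathcal{B}_{V,<V})$, only yields an identity that collapses trivially under Fubini using $\int f(y, x_W)\, d\mu^v_E(y) = 0$. To obtain a genuinely pointwise statement, I would argue by contradiction, exploiting the canonical family's definability structure and the $\mathbf{max}$ function symbols from Theorem~\ref{ultraproduct}(4): if the orthogonality failed on a positive-measure set of $(y_0, y_1)$, the extremal choices of parameters afforded by $\mathbf{max}$ would let us assemble, from a countable dense family of test $\phi$'s, a definable $\mathcal{B}_{V,<V}$-measurable function whose inner product against $f$ is strictly positive, contradicting the hypothesis. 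This upgrade from integrated to pointwise orthogonality is the technically delicate step, and is precisely where the full strength of working inside a canonical family (rather than with an arbitrary product measure on a Loeb space) is required.
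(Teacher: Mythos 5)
Your inductive reformulation of the Gowers integral is algebraically correct, but the reduction you arrive at---that $\mathbb{E}(g_{y_0,y_1}\mid\mathcal{B}_{W,<W})=0$ for almost every $(y_0,y_1)$, where $g_{y_0,y_1}(x_W)=f(y_0,x_W)f(y_1,x_W)$---is not a simplification of the original problem. You have correctly identified this as the main obstacle, but you have not resolved it, and the resolution you sketch cannot work here. The hypothesis $\mathbb{E}(f\mid\mathcal{B}_{V,<V})=0$ only yields \emph{averaged} orthogonality: for any $\phi\in L^\infty(\mathcal{B}_{W,<W})$ and fixed $y_1$, the function $x_V\mapsto f(y_1,x_W)\phi(x_W)$ depends only on $x_W$ and is therefore $\mathcal{B}_{V,<V}$-measurable, so the hypothesis gives $\int\bigl(\int g_{y_0,y_1}\phi\,d\mu^W_E\bigr)d\mu^v_E(y_0)=0$; but the bracketed quantity is not sign-definite, so nothing pointwise in $(y_0,y_1)$ follows from this. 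Your proposed repair via the $\mathbf{max}$ function symbols is not carried out and is also misplaced: Theorem \ref{converse_product} is stated for an arbitrary canonical family on an arbitrary model, not one necessarily equipped with the $\mathbf{max}$ structure of $\mathcal{L}^{\gamma,d}$. More importantly, the $\mathbf{max}$-driven upgrade from integrated to pointwise orthogonality is precisely the engine of the \emph{non}-product arguments (Theorems \ref{converse_nonprincipal} and \ref{converse_random}), which in turn invoke Theorem \ref{converse_product} as the base case for product measures; re-deriving it here would be circular within the paper's development.

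The theorem as stated admits a direct argument that requires neither induction nor any pointwise disintegration. Because $\mu^V_E$ is a product measure, every edge of $E^{V+V}$ meets $\{0,1\}\times V$ in at most one vertex, so $\mu^{V+V}_E$ factors as $\mu^V_E\otimes\mu^V_E$ across the $0$-copy $x^0_V$ and the $1$-copy $x^1_V$. Hence
\[\|f\|_{U^V_\infty(\mu^V_E)}^{2^{|V|}}=\iint f(x^0_V)\prod_{\omega\neq 0^V}f(x^\omega_V)\,d\mu^V_E(x^0_V)\,d\mu^V_E(x^1_V).\]
For each fixed $x^1_V$ and each $\omega\neq 0^V$, the factor $f(x^\omega_V)$ depends on $x^0_V$ only through the proper subset $\{v:\omega(v)=0\}\subsetneq V$, so $\prod_{\omega\neq 0^V}f(x^\omega_V)$ is a $\mathcal{B}_{V,<V}$-measurable function of $x^0_V$. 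The inner integral over $x^0_V$ therefore vanishes by the hypothesis $\mathbb{E}(f\mid\mathcal{B}_{V,<V})=0$, and so does the whole expression. This is the paper's proof; the product hypothesis is used exactly once, to obtain the Fubini split that exposes $x^0_V$ as an independent copy of the measure, after which the result falls out immediately.
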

\begin{proof}
This is essentially identical to the argument we gave for regularity for ordinary measures.  Suppose $\|\mathbb{E}(f\mid\mathcal{B}_{V,<V})\|_{L^2(\mu^V)}=0$.  We have
\begin{align*} 
\|f\|_{U^V_\infty(\mu^V_{E})}
=&\int f( x_V^0)\prod_{\omega\in\{0,1\}^V,\omega\neq 0^V}f( x_V^\omega)\,d\mu^{V+V}_E\\
=&\iint f( x_V^0)\prod_{\omega\in\{0,1\}^n,\omega\neq 0^V}f( x_V^\omega)\,d\mu^V_Ed\mu^V_E\\
\end{align*}
This last equality holds because $\mu^V_E$ is a product measure, and so the inner copy of $\mu^V_E$ does not depend on the choice of $x_V^1$.

Observe that, for every particular value of $ x_V^1$, $\prod_{\omega\in\{0,1\}^V,\omega\neq 0^V}f( x_V^\omega)$ is $\mathcal{B}_{V,<V}$-measurable, and therefore
\[\int f( x_V^0)\prod_{\omega\in\{0,1\}^V,\omega\neq 0^V}f( x_V^\omega)\,d\mu^V_E=0.\]
\end{proof}

\subsection{Seminorms for Nonprincipal Algebras}\label{sec:nonprincipal_seminorms}
We will need a more general family of seminorms corresponding to arbitrary algebras of the form $\mathcal{B}_{V,\mathcal{I}}$.

\begin{definition}
For $J\subseteq V$, define
\[\|f\|_{U^{V,J}_\infty(\mu^V_E)}=\left(\int \prod_{\omega\in\{0,1\}^J}f(x_{V\setminus J},x_J^\omega)\,d\mu^{V+J}_E\right)^{2^{-|J|}}.\]
\end{definition}
Note that $\|f\|_{U^{V,V}_\infty(\mu^V_E)}=\|f\|_{U^V_\infty(\mu^V_E)}$.

We need to generalize to norms $U^{V,\mathcal{J}}$ where $\mathcal{J}$ is a set.  A natural choice would be to take the product of $U^{V,J}$ over all $J\in\mathcal{J}$, but this is not a seminorm.  Instead we need the following form:
\begin{definition}
  Let $\mathcal{J}\subseteq\mathcal{P}(V)$ be a set such that if $J,J'\in\mathcal{J}$ are distinct then $J\not\subseteq J'$.  Then we define
\[\|f\|_{U^{V,\mathcal{J}}_\infty(\mu^V_E)}=\inf\sum_{i\leq k}\left(\prod_{J\in\mathcal{J}}\|f_i\|^{2^{|J|}}_{U^{V,J}_\infty(\mu^V_E)}\right)^{\frac{1}{\sum_{J\in\mathcal{J}}2^{|J|}}}\]
where the infimum is taken over all sequences $f_0,\ldots,f_k$ such that $f=\sum_{i\leq k}f_i$.
\end{definition}
It is not immediately obvious that $U^{V,J}_\infty$ and $U^{V,\{J\}}_\infty$ calculate the same value, but this will follow once we show that $U^{V,J}_\infty$ is a seminorm.

\begin{lemma}
  If $f$ is an $L^\infty(\mu^V_E)$ function then
\[0\leq \int \prod_{\omega\in\{0,1\}^J}f(x_{V\setminus J},x_J^\omega)\,d\mu^{V+J}_E.\]
\end{lemma}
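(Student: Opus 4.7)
The plan is to adapt the peeling argument from Lemma \ref{gowers_wf} to a single coordinate in $J$. Fix any $v \in J$ (the case $J = \emptyset$ is uninteresting, and we tacitly assume $J$ is non-empty, as is needed for $||\cdot||_{U^{V,J}_\infty(\mu^V_E)}$ to make sense as a seminorm).

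The key observation is that in the hypergraph $E^{V+J}$ defining $\mu^{V+J}_E$, no edge contains both $(v,0)$ and $(v,1)$: for any $J' \in E$ with $v \in J'$ and any $\omega : J' \cap J \to \{0,1\}$, the edge $J'^\omega$ contains only the single vertex $(v, \omega(v))$ among the two copies of $v$. Using the Fubini property of canonical families, the measure therefore decomposes as
\[\int \cdot \, d\mu^{V+J}_E \;=\; \iiint \cdot \, d\mu^v_E \, d\mu^v_E \, d\mu^{(V\setminus\{v\}) + (J\setminus\{v\})}_E,\]
where the two inner integrals are taken independently over $x_v^0$ and $x_v^1$ against copies of $\mu^v$ with the same background parameters (drawn from $x_{V\setminus J}$ together with $x_{J\setminus\{v\}}^0 \cup x_{J\setminus\{v\}}^1$).

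The product over $\omega \in \{0,1\}^J$ now factors according to the value of $\omega(v)$:
\[\prod_{\omega \in \{0,1\}^J} f(x_{V\setminus J}, x_J^\omega) \;=\; F(x_v^0) \cdot F(x_v^1),\]
where
\[F(x_v) \;=\; \prod_{\omega' \in \{0,1\}^{J\setminus\{v\}}} f\bigl(x_{V\setminus J}, x_{J\setminus\{v\}}^{\omega'}, x_v\bigr)\]
does not involve the other copy of $v$. Applying the product decomposition above, the doubled innermost integral becomes $\left(\int F(x_v)\, d\mu^v_E\right)^2$, which is non-negative. Integrating this non-negative integrand against the remaining probability measure preserves non-negativity, completing the proof.

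The argument is short, and no real obstacle is anticipated. The only subtleties are in bookkeeping: verifying that the canonical family's size $\sum_{I \in E} 2^{|I \cap V|}$ is large enough to support the Fubini decomposition used here (which it is, by the explicit choice made at the start of the subsection, since $\mu^{V+J}_E$ is a sub-configuration of $\mu^{V+V}_E$), and that we are working at a parameter $a_P$ where the requisite Fubini identities hold (a full-measure assumption, and the reason for restricting to such $a_P$ in the definition of $\mu^V_E$ here).
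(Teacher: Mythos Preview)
Your argument is correct. You peel off a single coordinate $v\in J$, use that no edge of $E^{V+J}$ contains both copies of $v$ to split the inner integral as a product, and recognize the result as a square; this is precisely the mechanism behind the non-negativity half of Lemma~\ref{gowers_wf}, transplanted to the $V+J$ configuration. One small notational point: the two inner measures you denote $d\mu^v_E$ are really $\mu^{(v,0)}_{E^{V+J},\text{rest}}$ and $\mu^{(v,1)}_{E^{V+J},\text{rest}}$; that they coincide (and hence yield a genuine square) uses the bijection clause in the definition of a weakly canonical family, which you might mention explicitly.

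The paper's proof takes a different, shorter route: rather than peeling a single $v$, it integrates out all of $V'=V\setminus J$ at once via Fubini, and then simply observes that the inner integral over $d\mu^{J+J}_{E,x_{V'}}$ is exactly $||f(x_{V'},\cdot)||_{U^J_\infty(\mu^J_{E,x_{V'}})}^{2^{|J|}}$, which is non-negative by Lemma~\ref{gowers_wf}. So the paper reduces to the principal case and cites prior work, while you essentially reprove the relevant piece of Lemma~\ref{gowers_wf} in situ. Your approach is more self-contained; the paper's is more economical and also sets up the identity $||f||_{U^{V,J}_\infty}^{2^{|J|}}=\int ||f(x_{V'},\cdot)||_{U^J_\infty}^{2^{|J|}}\,d\mu^{V'}_E$, which is reused in several later arguments.
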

\begin{proof}
  Let $V'=V\setminus J$.  We have
\begin{align*}
\int \prod_{\omega\in\{0,1\}^J}f(x_{V'},x_J^\omega)\,d\mu^{V+J}_E
&=\iint \prod_{\omega\in\{0,1\}^J}f(x_{V'},x_J^\omega)\,d\mu^{J+J}_{E,x_{V'}}d\mu^{V'}_E\\
&=\int \|f(x_{V'},\cdot)\|^{2^{|J|}}_{U^J_\infty(\mu^J_{E,x_{V'}})}\,d\mu^{V'}_E\\
&\geq 0.
\end{align*}
\end{proof}

\begin{lemma}
 $|\int f\,d\mu^V_E|\leq \|f\|_{U^{V,\mathcal{J}}_\infty(\mu^V_E)}$
\end{lemma}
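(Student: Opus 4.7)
The plan is to reduce to a single-summand estimate via the triangle inequality for the integral, and then to bound the integral of a single function by the appropriate weighted geometric mean of the $U^{V,J}_\infty$ norms.

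For any decomposition $f = \sum_{i \le k} f_i$ we have $|\int f\,d\mu^V_E| \le \sum_i |\int f_i\,d\mu^V_E|$, so it suffices to show that for each individual function $g$,
\[
\left|\int g\,d\mu^V_E\right|
\;\le\;
\left(\prod_{J\in\mathcal{J}} \|g\|_{U^{V,J}_\infty(\mu^V_E)}^{2^{|J|}}\right)^{1/S},
\]
where $S = \sum_{J\in\mathcal{J}} 2^{|J|}$. Summing over $i$ and taking the infimum over decompositions will then yield $|\int f\,d\mu^V_E| \le \|f\|_{U^{V,\mathcal{J}}_\infty(\mu^V_E)}$.

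The key ingredient is the single-$J$ bound $|\int g\,d\mu^V_E| \le \|g\|_{U^{V,J}_\infty(\mu^V_E)}$, valid for every $J \subseteq V$. I would prove this by iterating Cauchy--Schwarz once for each index $j \in J$. At each step, pick $j$ not yet doubled and use the Fubini property of the canonical family to write $\int \cdot\, d\mu^V_E = \iint\cdot\,d\mu^{\{j\}}_{E, x_{V\setminus\{j\}}}\,d\mu^{V\setminus\{j\}}_E$. Cauchy--Schwarz on the outer integral gives
\[
\left|\int g\,d\mu^V_E\right|^2
\;\le\;
\int \left(\int g\,d\mu^{\{j\}}_{E, x_{V\setminus\{j\}}}\right)^{2} d\mu^{V\setminus\{j\}}_E.
\]
Since no edge of $E^{V+\{j\}}$ contains both copies $(j,0)$ and $(j,1)$, the measure $\mu^{\{j\}+\{j\}}_{E, x_{V\setminus\{j\}}}$ is the product $\mu^{\{j\}}_{E, x_{V\setminus\{j\}}} \otimes \mu^{\{j\}}_{E, x_{V\setminus\{j\}}}$, so the squared inner integral equals $\int g(x^0_j) g(x^1_j)\,d\mu^{\{j\}+\{j\}}_{E, x_{V\setminus\{j\}}}$. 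Reassembling with Fubini converts this into an integral against $\mu^{V+\{j\}}_E$. Iterating across all $j \in J$ produces $|\int g\,d\mu^V_E|^{2^{|J|}} \le \int \prod_{\omega \in \{0,1\}^J} g(x_{V\setminus J}, x^\omega_J)\,d\mu^{V+J}_E$, which is the desired inequality.

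Given this lemma, raise both sides to the power $2^{|J|}/S$ and take the product over $J \in \mathcal{J}$: since $\sum_{J \in \mathcal{J}} 2^{|J|}/S = 1$,
\[
\left|\int g\,d\mu^V_E\right|
= \left|\int g\,d\mu^V_E\right|^{\sum_J 2^{|J|}/S}
\le \prod_{J \in \mathcal{J}} \|g\|_{U^{V,J}_\infty(\mu^V_E)}^{2^{|J|}/S}
= \left(\prod_{J \in \mathcal{J}} \|g\|_{U^{V,J}_\infty(\mu^V_E)}^{2^{|J|}}\right)^{1/S},
\]
which is the single-summand estimate. The main obstacle is to verify the Fubini/conditional-independence step cleanly: one must confirm that the doubling construction $E^{V+\{j\}}$ avoids creating edges spanning $(j,0)$ and $(j,1)$ and that the canonical family's Fubini property applies to the relevant background parameters. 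Once that structural point is in hand, the rest is a routine iteration of Cauchy--Schwarz followed by an application of the weighted AM--GM/geometric-mean inequality.
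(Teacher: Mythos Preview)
Your proposal is correct and follows essentially the same three-step structure as the paper: establish the single-$J$ bound $|\int g\,d\mu^V_E|\le\|g\|_{U^{V,J}_\infty(\mu^V_E)}$, combine over $J\in\mathcal{J}$ via the weighted geometric mean, and then pass to arbitrary decompositions by the triangle inequality. The only cosmetic difference is in the single-$J$ step: the paper separates $V'=V\setminus J$ from $J$ via Fubini, applies Jensen to move the $2^{|J|}$-th power inside, and then invokes the already-proven corollary $|\int f\,d\mu^J_{E,x_{V'}}|\le\|f\|_{U^J_\infty(\mu^J_{E,x_{V'}})}$, whereas you re-derive that bound by iterating Cauchy--Schwarz one coordinate of $J$ at a time; these are the same argument in different packaging.
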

\begin{proof}
  First consider the case where $\mathcal{J}$ is a singleton $\{J\}$.  Again, let $V'=V\setminus J$.
  \begin{align*}
    \left|\int f\,d\mu^V_E\right|^{2^{|J|}}
&=\left|\iint f\,d\mu^J_{E,x_{V'}}d\mu^{V'}_E\right|^{2^{|J|}}\\
&\leq \int \left|\int f\,d\mu^J_{E,x_{V'}}\right|^{2^{|J|}}d\mu^{V'}_E\\
&\leq \int \|f\|^{2^{|J|}}_{U^J_\infty(\mu^J_{E,x_{V'}})}\,d\mu^{V'}_E\\
&=\|f\|_{U^{V,\mathcal{J}}_\infty(\mu^V_E)}^{2^{|J|}}.\\
  \end{align*}

For the general case, first observe that, setting $c=\sum_{J\in\mathcal{J}}2^{|J|}$,
\begin{align*}
  \left|\int f\,d\mu^V_E\right|^{c}
&=\prod_{J\in\mathcal{J}}\left|\int f\,d\mu^V_E\right|^{2^{|J|}}\\
&\leq \prod_{J\in\mathcal{J}}\|f\|^{2^{|J|}}_{U^{V,J}_\infty(\mu^V_E)}.\\
\end{align*}
So if $f=\sum_{i\leq k}f_i$ we have
\[\left|\int f\,d\mu^V_E\right|\leq\sum_{i\leq k}\left|\int f_i\,d\mu^V_E\right|\leq\sum_{i\leq k}\left(\prod_{J\in\mathcal{J}}\|f_i\|^{2^{|J|}}_{U^{V,J}_\infty(\mu^V_E)}\right)^{\frac{1}{c}}.\]
This holds for any $\sum_{i\leq k}f_i$, so $\left|\int f\,d\mu^V_E\right|\leq \|f\|_{U^{V,\mathcal{J}}_\infty(\mu^V_E)}$.
\end{proof}

\begin{lemma}
   $\|\cdot\|_{U^{V,\mathcal{J}}_\infty(\mu^V_E)}$ is a seminorm.
\end{lemma}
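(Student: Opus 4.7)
The plan is to verify the two defining axioms: absolute homogeneity $||cf||=|c|\cdot||f||$ and subadditivity $||f+g||\leq||f||+||g||$.

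For homogeneity, the first step is to record the scaling behavior of each principal seminorm. Replacing $f$ by $cf$ in the definition of $||\cdot||_{U^{V,J}_\infty(\mu^V_E)}$ pulls a factor $c^{2^{|J|}}$ out of the integrand; since $2^{|J|}$ is even whenever $|J|\geq 1$ (and $\mathcal{J}$ is implicitly restricted to nonempty $J$, since otherwise the non-negativity lemma underlying the root extraction would already fail), this factor equals $|c|^{2^{|J|}}$, and taking the $2^{-|J|}$-th root yields $||cf||_{U^{V,J}_\infty(\mu^V_E)}=|c|\cdot||f||_{U^{V,J}_\infty(\mu^V_E)}$. Then for any decomposition $f=\sum_{i\leq k}f_i$, the associated sum for $cf=\sum_{i\leq k}(cf_i)$ becomes
\[\sum_{i\leq k}\left(\prod_{J\in\mathcal{J}}|c|^{2^{|J|}}||f_i||^{2^{|J|}}_{U^{V,J}_\infty(\mu^V_E)}\right)^{1/\sum_{J}2^{|J|}}=|c|\sum_{i\leq k}\left(\prod_{J\in\mathcal{J}}||f_i||^{2^{|J|}}_{U^{V,J}_\infty(\mu^V_E)}\right)^{1/\sum_{J}2^{|J|}}.\]
The exponent $1/\sum_{J}2^{|J|}$ is precisely calibrated so that the $|c|$-factor pulls out uniformly. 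For $c\neq 0$, the bijection $g_i\leftrightarrow g_i/c$ between decompositions of $cf$ and of $f$ transports the infima, giving $||cf||_{U^{V,\mathcal{J}}_\infty(\mu^V_E)}=|c|\cdot||f||_{U^{V,\mathcal{J}}_\infty(\mu^V_E)}$; for $c=0$, the trivial decomposition $0=0$ with a single zero summand yields $||0||_{U^{V,\mathcal{J}}_\infty(\mu^V_E)}=0$.

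For subadditivity, given $\epsilon>0$, I will choose decompositions $f=\sum_{i\leq k}f_i$ and $g=\sum_{j\leq\ell}g_j$ whose associated sums are within $\epsilon/2$ of $||f||_{U^{V,\mathcal{J}}_\infty(\mu^V_E)}$ and $||g||_{U^{V,\mathcal{J}}_\infty(\mu^V_E)}$ respectively. Concatenating yields a decomposition of $f+g$ whose associated sum is exactly the sum of the two individual sums, so $||f+g||_{U^{V,\mathcal{J}}_\infty(\mu^V_E)}\leq||f||_{U^{V,\mathcal{J}}_\infty(\mu^V_E)}+||g||_{U^{V,\mathcal{J}}_\infty(\mu^V_E)}+\epsilon$; letting $\epsilon\to 0$ finishes the argument.

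There is no genuine obstacle here: the infimum-over-decompositions form of the definition is tailored so that subadditivity is essentially tautological, and the apparently strange exponent $1/\sum_{J}2^{|J|}$ is exactly what is needed to make the $|c|$-factor pull out uniformly in homogeneity. The only point requiring vigilance is the exponent bookkeeping and the verification that each principal $||\cdot||_{U^{V,J}_\infty(\mu^V_E)}$ is absolutely homogeneous, which is why the tacit restriction to $J$ of positive cardinality matters.
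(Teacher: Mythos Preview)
Your proof is correct. Both homogeneity and subadditivity follow exactly as you describe: the exponent $1/\sum_J 2^{|J|}$ is calibrated to make $|c|$ factor out, and the infimum-over-decompositions formulation makes subadditivity essentially tautological via concatenation.

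There is one minor difference in emphasis worth noting. The paper does not go straight to the concatenation argument; it first proves the triangle inequality for the direct (non-infimum) seminorm $||\cdot||_{U^{V,J}_\infty(\mu^V_E)}$ in the singleton case, using the Fubini decomposition $||h||_{U^{V,J}_\infty}^{2^{|J|}}=\int ||h||_{U^J_\infty(\mu^J_{E,x_{V'}})}^{2^{|J|}}\,d\mu^{V'}_E$ together with the already established seminorm property of $U^J_\infty$. Only then does it invoke concatenation for $|\mathcal{J}|>1$. The payoff is that once $||\cdot||_{U^{V,J}_\infty}$ is known to be subadditive, the trivial decomposition $f=f$ is already optimal, so $U^{V,J}_\infty$ and $U^{V,\{J\}}_\infty$ coincide---a fact the paper flags just before the lemma. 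Your route is slightly more economical for the lemma as stated, since it uses only absolute homogeneity (not subadditivity) of the individual $U^{V,J}_\infty$; but it does not by itself deliver the identification of the two singleton definitions, which the paper needs later.
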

 \begin{proof}
   Once again positive homogeneity is obvious from the definition, so we need only check that the triangle inequality holds.

We first consider the case where $\mathcal{J}$ is a singleton:
   \begin{align*}
     \|f+g\|_{U^{V,J}_\infty(\mu^V_E)}^{2^{|J|}}
 &=\int \|f+g\|^{2^{|J|}}_{U^J_\infty(\mu^J_{E,x_{V'}})}\,d\mu^{V'}_E\\
 &\leq\int \left(\|f\|_{U^J_\infty(\mu^J_{E,x_{V'}})}+\|g\|_{U^J_\infty(\mu^J_{E,x_{V'}})}\right)^{2^{|J|}}\,d\mu^{V'}_E\\
 &=\int\sum_{i\leq {2^{|J|}}}{{2^{|J|}}\choose i}\|f\|_{U^J_\infty(\mu^J_{E,x_{V'}})}^i\|g\|_{U^J_\infty(\mu^J_{E,x_{V'}})}^{2^{|J|}-i}\,d\mu^{V'}_E\\
 &=\sum_{i\leq {2^{|J|}}}{{2^{|J|}}\choose i}\int\|f\|_{U^J_\infty(\mu^J_{E,x_{V'}})}^i\|g\|_{U^J_\infty(\mu^J_{E,x_{V'}})}^{2^{|J|}-i}\,d\mu^{V'}_E.\\
 \end{align*}
Applying H\"older's inequality with $p=2^{|J|}/i$ (and therefore $q=1/(1-1/p)=2^{|J|}/(2^{|J|}-i)$ gives an upper bound of
\begin{align*}
\|f+g\|_{U^{V,J}_\infty(\mu^V_E)}^{2^{|J|}}
&\leq\sum_{i\leq {2^{|J|}}}{{2^{|J|}}\choose i}
\left(\int\|f\|_{U^J_\infty(\mu^J_{E,x_{V'}})}^{2^{|J|}}\,d\mu^{V'}_E\right)^{i/2^{|J|}}
\left(\int\|g\|_{U^J_\infty(\mu^J_{E,x_{V'}})}^{2^{|J|}}\,d\mu^{V'}_E\right)^{(2^{|J|}-i)/2^{|J|}}\\
  &\leq\sum_{i\leq {2^{|J|}}}{{2^{|J|}}\choose i}\left(\|f\|_{U^{V,J}_\infty(\mu^V_E)}^{2^{|J|}}\right)^{i/2^{|J|}}\left(\|g\|_{U^{V,J}_\infty(\mu^V_E)}^{2^{|J|}}\right)^{(2^{|J|}-i)/2^{|J|}}\\
  &=\sum_{i\leq {2^{|J|}}}{{2^{|J|}}\choose i}\|f\|_{U^{V,J}_\infty(\mu^V_E)}^{i}\|g\|_{U^{V,J}_\infty(\mu^V_E)}^{2^{|J|}-i}\\
  &=\left(\|f\|_{U^{V,J}_\infty(\mu^V_E)}+\|g\|_{U^{V,J}_\infty(\mu^V_E)}\right)^{2^{|J|}}\\
\end{align*}



For $|\mathcal{J}|>1$, we may use the fact that if $f=\sum_{i\leq k}f_i$ and $g=\sum_{j\leq m}g_j$ then $f+g=\sum_{i\leq k}f_i+\sum_{j\leq m}g_j$.
 \end{proof}

The main thing that makes the uniformity seminorms useful to us is that they easily pass across different measures:
\begin{lemma}
Let $J\subseteq V$ and $V'=V\setminus J$.  If $\|f\|_{U^J_\infty(\mu^J_E)}=0$ then for $\mu^{V'}_E$-almost-every $x_{V'}$, $\|f\|_{U^J_\infty(\mu^J_{E,x_{V'}})}=0$.
\end{lemma}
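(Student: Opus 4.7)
The plan is to apply the Fubini property guaranteed by canonicity of the measure family. The canonical structure allows us to decompose integrals against $\mu^{J+J}_E$ as iterated integrals, first over the duplicated $J$-coordinates (with measure $\mu^{J+J}_{E,x_{V'}}$) and then over the $V'$-coordinates (with measure $\mu^{V'}_E$). Since $f$ does not depend on the $V'$-variables, this is a direct application of the Fubini clause in the definition of a canonical family.

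Applying this decomposition to the defining integral of the seminorm, the plan is to establish the identity
\[
\|f\|^{2^{|J|}}_{U^J_\infty(\mu^J_E)} = \int \prod_{\omega\in\{0,1\}^J} f(x_J^\omega)\, d\mu^{J+J}_E = \int \|f\|^{2^{|J|}}_{U^J_\infty(\mu^J_{E,x_{V'}})}\, d\mu^{V'}_E.
\]
By the non-negativity argument underlying Lemma \ref{gowers_wf} (applied trivially, with $B = M^J$), the integrand on the right is non-negative. Hence if the left-hand side vanishes, the integrand must vanish for $\mu^{V'}_E$-almost every $x_{V'}$, which is exactly the stated conclusion.

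The main point to verify is that the degree $\sum_{I\in E}2^{|I\cap V|}$ fixed at the beginning of the section is large enough to ensure that Fubini applies to $\mu^{J+J}_E$ split as $\mu^{J+J}_{E,x_{V'}}\,d\mu^{V'}_E$. Since that degree was chosen precisely to cover the most demanding case $\mu^{V+V}_E$, it comfortably accommodates the smaller $\mu^{J+J}_E$ for $J \subseteq V$. There is no real obstacle: the whole proof amounts to invoking canonicity once and then observing that an integral of a nonnegative function vanishes iff the integrand vanishes almost everywhere.
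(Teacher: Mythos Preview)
Your proposal is correct and follows the same approach as the paper: insert the $V'$ coordinates, apply the Fubini property of the canonical family to rewrite $\|f\|_{U^J_\infty(\mu^J_E)}^{2^{|J|}}$ as $\int \|f\|_{U^J_\infty(\mu^J_{E,x_{V'}})}^{2^{|J|}}\,d\mu^{V'}_E$, and conclude from non-negativity of the integrand. The paper makes one step slightly more explicit than you do---it first multiplies by $\int 1\,d\mu^{V'}_{E,x_J^0\cup x_J^1}=1$ to lift the integral to the larger space $(\{0,1\}\times J)\cup V'$ before invoking Fubini---but this is exactly what you are gesturing at with ``since $f$ does not depend on the $V'$-variables.''
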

\begin{proof}
  \begin{align*}
    0
&=\|f\|_{U^J_\infty(\mu^J_E)}^{2^{|J|}}\\
&=\int\prod_{\omega\in\{0,1\}^J}f(x_J^\omega)\,d\mu^J_E\\
&=\int\prod_{\omega\in\{0,1\}^J}f(x_J^\omega)\int 1\, d\mu^{V'}_{E,x_J^0\cup x_J^1}d\mu^{J+J}_E\\
&=\int\prod_{\omega\in\{0,1\}^J}f(x_J^\omega)\,d\mu^{J+J}_{E,x_V'}d\mu^{V'}_E\\
&=\int\|f\|_{U^J_\infty(\mu^J_{E,x_{V'}})}^{2^{|J|}}\,d\mu^{V'}_E.\\
  \end{align*}
Therefore for $\mu^{V'}_E$-almost-every $x_{V'}$, $\|f\|_{U^J_\infty(\mu^J_{E,x_{V'}})}=0$.
\end{proof}

In order to associate these more general seminorms with the correct algebras, we introduce the following definition:
\begin{definition}
  If $\mathcal{I}\subseteq\mathcal{P}(V)$ is non-empty, we define $\mathcal{I}^{\bot}$ to be the set of $J\subseteq V$ such that:
  \begin{enumerate}
  \item There is no $I\in\mathcal{I}$ with $J\subseteq I$,
  \item If $J'\subsetneq J$ then there is an $I\in\mathcal{I}$ with $J'\subseteq I$.
  \end{enumerate}

We also set $J^-=\{I\subseteq V\mid J\not\subseteq I\}$.
\end{definition}
$\cdot^\bot$ and $\cdot^-$ depend on the choice of the ambient set $V$.  We note some useful properties of these definitions:
\begin{enumerate}
\item If $\mathcal{I}=\{I\subseteq V\mid |I|=|V|-1\}$ then $\mathcal{I}^{\bot}=\{V\}$,
\item $(J^-)^{\bot}=\{J\}$,
\item $\mathcal{I}^{\bot}$ always has the property that if $J,J'\in\mathcal{I}^{\bot}$ are distinct then $J\not\subseteq J'$, and
\item If $\mathcal{J}$ has the property that when $J,J'\in\mathcal{J}$ are distinct then $J\not\subseteq J'$ and $\mathcal{I}$ is the collection of $I$ such that $I\subsetneq J$ for some $J\in\mathcal{J}$, then $\mathcal{J}=\mathcal{I}^{\bot}$.
\end{enumerate}
These last two properties show that $\|\cdot\|_{U^{V,\mathcal{J}}_\infty(\mu^V_E)}$ is defined exactly when $\mathcal{J}=\mathcal{I}^{\bot}$ for some $\mathcal{I}$.

We will eventually show that when $\mu^V_E$ is nice enough, $\mathcal{B}_{V,\mathcal{I}}$ and $\bigcap_{J\in\mathcal{I}^\bot}\mathcal{B}_{V,J^-}$ agree up to $\mu^V_E$ measure $0$.

\begin{lemma}
If there is no $J\in\mathcal{J}$ such that $J\subseteq I$ and $B$ is $\mathcal{B}_{V,I}$-measurable then 
\[\|f\chi_B\|_{U^{V,\mathcal{J}}_\infty(\mu^V_E)}\leq \|f\|_{U^{V,\mathcal{J}}_\infty(\mu^V_E)}.\]
\end{lemma}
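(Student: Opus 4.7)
The plan is to reduce to the singleton case $\mathcal{J}=\{J\}$ and then slice the integral using canonicity, after which the desired inequality is exactly Lemma \ref{gowers_wf} applied fiberwise.

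First I would treat the case where $\mathcal{J}=\{J\}$ for a single $J\subseteq V$ with $J\not\subseteq I$, so that $I\cap J\subsetneq J$. Using the Fubini decomposition guaranteed by the canonicity of $\mu$ (and the fact that the graph $E^{V+J}$ contains no edge joining $(i,0)$ to $(i,1)$, so the measure $\mu^{V+J}_E$ fibers over $\mu^{V\setminus J}_E$), we rewrite
\[||f||_{U^{V,J}_\infty(\mu^V_E)}^{2^{|J|}}=\int ||f(x_{V\setminus J},\cdot)||_{U^J_\infty(\mu^J_{E,x_{V\setminus J}})}^{2^{|J|}}\,d\mu^{V\setminus J}_E,\]
and similarly for $f\chi_B$. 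For each fixed $x_{V\setminus J}$, the slice $B_{x_{V\setminus J}}:=\{x_J\mid (x_{V\setminus J},x_J)\in B\}$ is measurable with respect to $\mathcal{B}_{J,I\cap J}$, since on the generating algebra this is immediate and it extends to the $\sigma$-algebra by the standard approximation of $B\in\mathcal{B}_{V,I}$ by elements of $\mathcal{B}^0_{V,I}$ in $L^1(\mu^V_E)$. Since $I\cap J\subsetneq J$, Lemma \ref{gowers_wf} applied to the function $f(x_{V\setminus J},\cdot)$, the measure $\mu^J_{E,x_{V\setminus J}}$, and the set $B_{x_{V\setminus J}}$ gives the pointwise bound
\[||f(x_{V\setminus J},\cdot)\chi_{B_{x_{V\setminus J}}}||_{U^J_\infty(\mu^J_{E,x_{V\setminus J}})}^{2^{|J|}}\leq ||f(x_{V\setminus J},\cdot)||_{U^J_\infty(\mu^J_{E,x_{V\setminus J}})}^{2^{|J|}}.\]
Integrating in $x_{V\setminus J}$ yields the singleton inequality.

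For the general case, observe that if $f=\sum_{i\le k}f_i$ is any decomposition then $f\chi_B=\sum_{i\le k}(f_i\chi_B)$ is a decomposition of $f\chi_B$. By the singleton case, applied to each $f_i$ and each $J\in\mathcal{J}$,
\[\prod_{J\in\mathcal{J}}||f_i\chi_B||_{U^{V,J}_\infty(\mu^V_E)}^{2^{|J|}}\leq \prod_{J\in\mathcal{J}}||f_i||_{U^{V,J}_\infty(\mu^V_E)}^{2^{|J|}},\]
so summing the $1/(\sum_J 2^{|J|})$-powers and taking the infimum over decompositions of $f$ bounds $||f\chi_B||_{U^{V,\mathcal{J}}_\infty(\mu^V_E)}$ by $||f||_{U^{V,\mathcal{J}}_\infty(\mu^V_E)}$, as required.

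The only substantive obstacle is the slice-measurability step: we are sliceing a set in a generally nonseparable $\sigma$-algebra, so strictly speaking the slice of a general $B\in\mathcal{B}_{V,I}$ is only measurable in a suitable completion. However, for the purpose of bounding the seminorm it suffices to approximate $B$ by sets $B_n\in\mathcal{B}^0_{V,I}$ in $L^1(\mu^V_E)$; for each $B_n$ every slice lies in $\mathcal{B}^0_{J,I\cap J}$ so the fiberwise inequality is genuine, and the full result follows by dominated convergence, exactly the standard device already used in the treatment of Keisler measures earlier in the paper.
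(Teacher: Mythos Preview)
Your proposal is correct and follows essentially the same route as the paper: reduce to the singleton case, rewrite $||\cdot||_{U^{V,J}_\infty}^{2^{|J|}}$ as an integral of fiberwise $U^J_\infty$ seminorms, and apply Lemma~\ref{gowers_wf} pointwise in $x_{V\setminus J}$ using that $I\cap J\subsetneq J$. The paper simply asserts ``it suffices to show this for $\mathcal{J}$ a singleton'' where you spell out the decomposition argument, and the paper asserts slice measurability where you add an approximation step; in fact the latter is unnecessary, since $\mathcal{B}_{V,I}$ is exactly the pullback $\sigma$-algebra $\{\,\{x_V:x_I\in C\}:C\in\mathcal{B}_I\,\}$ and slicing commutes with countable Boolean operations, so every slice of a $\mathcal{B}_{V,I}$-set genuinely lies in $\mathcal{B}_{J,I\cap J}$.
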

\begin{proof}
It suffices to show this for $\mathcal{J}$ a singleton $\{J\}$.  Write $V'=V\setminus J$.  Observe that for any fixed $x_{V'}$, the function $\chi_B(x_{V'},\cdot)$ is a $\mathcal{B}_{J,J\cap I}$-measurable function, where $J\cap I$ must be a proper subset of $J$.  So we have:
  \begin{align*}
\|f\chi_B\|_{U^{V,J}_\infty(\mu^V_E)}^{2^{|J|}}
&=\int \|f\chi_B\|_{U^J_\infty(\mu^J_{E,x_{V'}})}^{2^{|J|}}\,d\mu^{V'}_E\\
&\leq\int \|f\|_{U^J_\infty(\mu^J_{E,x_{V'}})}^{2^{|J|}}\,d\mu^{V'}_E\\
&=\|f\|_{U^{V,J}_\infty(\mu^V_E)}^{2^{|J|}}.
  \end{align*}
\end{proof}

\begin{theorem}\label{forwards}
  If $\|\mathbb{E}(f\mid\mathcal{B}_{V,\mathcal{I}})\|_{L^2(\mu^V_E)}>0$ then $\|f\|_{U^{V,\mathcal{I}^{\bot}}_\infty(\mu^V_E)}>0$.
\end{theorem}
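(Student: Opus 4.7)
The plan is to reduce the statement about conditional expectation to a simple integral correlation, and then apply in sequence the corollary $\left|\int g\,d\mu^V_E\right|\le\|g\|_{U^{V,\mathcal{I}^\bot}_\infty(\mu^V_E)}$ together with the monotonicity lemma proved just above the theorem.

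First I would unpack the hypothesis: if $\|\mathbb{E}(f\mid\mathcal{B}_{V,\mathcal{I}})\|_{L^2(\mu^V_E)}>0$ then $\int f\cdot\mathbb{E}(f\mid\mathcal{B}_{V,\mathcal{I}})\,d\mu^V_E>0$, so by density of simple functions (built from $\mathcal{B}^0_{V,\mathcal{I}}$) in $L^2(\mathcal{B}_{V,\mathcal{I}})$, there is a simple function $h=\sum_j c_j\chi_{C_j}$ with $C_j\in\mathcal{B}^0_{V,\mathcal{I}}$ and $\int fh\,d\mu^V_E\neq 0$. The next step, which I expect to be the only bit requiring real care, is to rewrite the $C_j$ in a form adapted to $\mathcal{I}$. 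Since $\mathcal{B}^0_{V,\mathcal{I}}$ is the Boolean algebra generated by $\bigcup_{I\in\mathcal{I}}\mathcal{B}^0_{V,I}$, and since each $\mathcal{B}^0_{V,I}$ is itself closed under Boolean combinations, each $C_j$ can be written as a disjoint finite union of atoms of the form $\bigcap_{I\in\mathcal{I}}B_{j,I}$ with $B_{j,I}\in\mathcal{B}^0_{V,I}$ (allowing $B_{j,I}=M^V$ to handle $I$'s that do not appear). Consequently at least one such atom contributes nonzero integral, and we obtain $g=\prod_{I\in\mathcal{I}}\chi_{B_I}$ with $B_I\in\mathcal{B}^0_{V,I}$ and
\[
\left|\int fg\,d\mu^V_E\right|>0.
\]

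Next I would apply the inequality $\left|\int fg\,d\mu^V_E\right|\le\|fg\|_{U^{V,\mathcal{I}^\bot}_\infty(\mu^V_E)}$ to conclude $\|fg\|_{U^{V,\mathcal{I}^\bot}_\infty(\mu^V_E)}>0$. Finally, I would peel off the indicator factors one at a time via the lemma stating that $\|f\chi_B\|_{U^{V,\mathcal{J}}_\infty(\mu^V_E)}\le\|f\|_{U^{V,\mathcal{J}}_\infty(\mu^V_E)}$ whenever $B\in\mathcal{B}_{V,I}$ and no $J\in\mathcal{J}$ is a subset of $I$. This compatibility condition is exactly what the definition of $\mathcal{I}^\bot$ guarantees: by clause (1) of the definition, no $J\in\mathcal{I}^\bot$ is contained in any $I\in\mathcal{I}$. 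Taking $\mathcal{J}=\mathcal{I}^\bot$, the lemma applies to each factor $\chi_{B_I}$, and iterating once per $I\in\mathcal{I}$ yields
\[
0<\|fg\|_{U^{V,\mathcal{I}^\bot}_\infty(\mu^V_E)}\le\|f\|_{U^{V,\mathcal{I}^\bot}_\infty(\mu^V_E)},
\]
which is the desired conclusion.

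The main conceptual point, and the one worth emphasizing in the write-up, is that the definition of $\mathcal{I}^\bot$ is engineered precisely so that the monotonicity lemma can strip off test functions drawn from any $\mathcal{B}_{V,I}$, $I\in\mathcal{I}$, without cost; the rest is a standard $L^2$-approximation and an appeal to the Gowers--Cauchy--Schwarz corollary already established.
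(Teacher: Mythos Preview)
Your proposal is correct and follows essentially the same route as the paper's proof: find sets $B_I\in\mathcal{B}_{V,I}$ with $\left|\int f\prod_{I\in\mathcal{I}}\chi_{B_I}\,d\mu^V_E\right|>0$, bound this integral by $\|f\prod_I\chi_{B_I}\|_{U^{V,\mathcal{I}^\bot}_\infty(\mu^V_E)}$, and then strip off the indicators via the monotonicity lemma, using that no $J\in\mathcal{I}^\bot$ lies inside any $I\in\mathcal{I}$. The paper simply asserts the existence of such $B_I$ in one sentence, whereas you spell out the $L^2$-approximation and the Boolean-algebra decomposition into intersections $\bigcap_{I\in\mathcal{I}}B_{j,I}$; this extra detail is fine and fills a step the paper leaves implicit.
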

\begin{proof}
  If $\|\mathbb{E}(f\mid\mathcal{B}_{V,\mathcal{I}})\|_{L^2(\mu^V_E)}>0$ then we may find, for each $I\in\mathcal{I}$, a set $B_I\in\mathcal{B}_{V,I}$, such that 
\[0<\left|\int f\prod_I\chi_{B_I}\,d\mu^V_E\right|\leq \|f\prod_I\chi_{B_I}\|_{U^{V,\mathcal{I}^{\bot}}_\infty(\mu^V_E)}.\]
Observe that for each $I\in\mathcal{I}$ we may apply the previous lemma, so we have
\[0<\|f\prod_I\chi_{B_I}\|_{U^{V,\mathcal{I}^{\bot}}_\infty(\mu^V_E)}\leq \|f\|_{U^{V,\mathcal{I}^{\bot}}_\infty(\mu^V_E)}.\]
\end{proof}

\subsection{Characteristic Seminorms}\label{sec:nonprincipal_characteristic}

\begin{definition}
  Let $\mu$ be a canonical family of measures of degree $k$ and size $\sum_{I\in E}2^{|I\cap V|}$.  For some $\mathcal{I}\subseteq \mathcal{P}(V)$, we say $U^{V,\mathcal{I}^{\bot}}_\infty(\mu^V_E)$ is \emph{characteristic} if for each $f\in L^\infty(\mathcal{B}_V)$, $\|f\|_{U^{V,\mathcal{I}^{\bot}}_\infty(\mu^V_E)}>0$ implies $\|\mathbb{E}(f\mid\mathcal{B}_{V,\mathcal{I}})\|_{L^2(\mu^V_E)}>0$.
\end{definition}

\begin{theorem}\label{char_implies_regular}
Suppose that $J\subseteq V$ and that whenever $\mathcal{I}\subseteq\mathcal{P}(J)$ is such that for distinct $J',J''\in\mathcal{I}$, $J'\not\subseteq J''$, $U^{J,\mathcal{I}}_\infty(\mu^J_E)$ is characteristic.  Then $\mu^V_E$ has $J$-regularity.
\end{theorem}
\begin{proof}
  Let $J\subsetneq V$ and $\mathcal{I}\subseteq\mathcal{P}(V)$ be given, and let $g$ and $f_I$ be as in the definition of regularity.  Let $h=g-\mathbb{E}(g\mid\mathcal{B}_{V,\mathcal{I}\wedge J})=g-\mathbb{E}(g\mid\mathcal{B}_{J,\mathcal{I}\wedge J})$ (viewing $g$ as a function on $\mathcal{B}_J$).  Since $\|\mathbb{E}(h\mid\mathcal{B}_{J,\mathcal{I}\wedge J})\|_{L^2(\mu^J_E)}=0$, by assumption we have $\|h\|_{U^{J,\mathcal{I}\wedge J}_\infty(\mu^J_E)}=0$.  Then we also have $\|h\|_{U^{J,\mathcal{I}\wedge J}_\infty(\mu^J_{E,x_{V\setminus J}})}=0$ for $\mu^{V\setminus J}_E$-almost-every $x_{V\setminus J}$.  (The exact choice of \emph{which} set of measure $1$ this holds on depends on the choice of representative of $h$.)

Including $x_{V\setminus J}$ as part of the background parameters, Theorem \ref{forwards} implies that $\|\mathbb{E}(h\mid\mathcal{B}_{J,\mathcal{I}\wedge J})\|_{L^2(\mu^J_{E,x_{V\setminus J}})}=0$, and so 
 \begin{align*}
   \int h\prod_{I}f_I\, d\mu^V_E
&=   \int h\prod_{I}f_I\, d\mu^{J}_{E,x_{V\setminus J}}d\mu^{V\setminus J}_E=0
 \end{align*}
since for every fixed $x_{V\setminus J}$, $\prod_{I}f_I$ is $\mathcal{B}_{J,\mathcal{I}\wedge J}$-measurable.
\end{proof}

Our goal in the remainder of this subsection is to reduce the problem of showing that the uniformity norms are characteristic to showing that the principal uniformity norms are characteristic.  We only need this for the case of a product measure, but we include the general argument for completeness.

\begin{lemma}
  Let $\mathcal{I}$ be given and let $J\subseteq V$.  If $\mu^V_{E}$ has $J$-regularity, $g\in L^2(\mathcal{B}_{V,J})$, and $\mathbb{E}(g\mid\mathcal{B}_{V,\mathcal{I}\wedge J})=0$ then $\mathbb{E}(g\mid\mathcal{B}_{V,\mathcal{I}})=0$.
\end{lemma}
\begin{proof}
Let such a $g$ be given, and for each $I\in\mathcal{I}$, let $f_I$ be $\mathcal{B}_{V,I}$-measurable, so $\prod_If_I$ is $\mathcal{B}_{V,\mathcal{I}}$-measurable.  Since $g=g-\mathbb{E}(g\mid\mathcal{B}_{V,\mathcal{I}\wedge J})$ and $\mu^V_E$ has $J$-regularity, 
\[\int g\prod_If_I\,d\mu^V_E=0,\]
and since this holds for any choice of $f_I$, $\mathbb{E}(g\mid\mathcal{B}_{V,\mathcal{I}})=0$.
\end{proof}

\begin{lemma}
  Let $\mathcal{I}$ be given and let $J\subseteq V$.  If $\mu^V_{E}$ has $J$-regularity then $\mathcal{B}_{V,\mathcal{I}\wedge J}=\mathcal{B}_{V,\mathcal{I}}\cap\mathcal{B}_{V,J}$ up to $\mu^V_E$-measure $0$.
\end{lemma}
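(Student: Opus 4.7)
The plan is to handle the two inclusions asymmetrically. The inclusion $\mathcal{B}_{V,\mathcal{I}\wedge\{J\}}\subseteq \mathcal{B}_{V,\mathcal{I}}\cap\mathcal{B}_{V,J}$ holds without any regularity hypothesis: for any $K\in\mathcal{I}\wedge\{J\}$ there is an $I\in\mathcal{I}$ with $K\subseteq I\cap J$, so a formula on the variables $w_K$ is both a formula on $w_I$ and on $w_J$. For the reverse inclusion, I will prove the stronger claim that for every bounded $\mathcal{B}_{V,\mathcal{I}}$-measurable $f$,
\[
\mathbb{E}(f\mid\mathcal{B}_{V,J}) = \mathbb{E}(f\mid\mathcal{B}_{V,\mathcal{I}\wedge\{J\}})\quad\mu^V_E\text{-a.e.}
\]
Applied to $f=\chi_A$ for $A\in\mathcal{B}_{V,\mathcal{I}}\cap\mathcal{B}_{V,J}$, the left-hand side reduces to $\chi_A$, so $\chi_A$ coincides $\mu^V_E$-a.e. with a $\mathcal{B}_{V,\mathcal{I}\wedge\{J\}}$-measurable function, which is the desired containment.

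To establish the claim I set $\phi := \mathbb{E}(f\mid\mathcal{B}_{V,J}) - \mathbb{E}(f\mid\mathcal{B}_{V,\mathcal{I}\wedge\{J\}})$, bounded and $\mathcal{B}_{V,J}$-measurable. Because $\mathcal{B}_{V,\mathcal{I}\wedge\{J\}}\subseteq\mathcal{B}_{V,J}$, the tower property gives $\mathbb{E}(\phi\mid\mathcal{B}_{V,\mathcal{I}\wedge\{J\}}) = 0$, i.e.\ $\phi$ is $L^2$-orthogonal to $\mathcal{B}_{V,\mathcal{I}\wedge\{J\}}$. A short computation using orthogonality and the fact that $\phi$ is $\mathcal{B}_{V,J}$-measurable yields $\|\phi\|_{L^2}^2 = \int\phi f\,d\mu^V_E$, so the entire task reduces to showing $\int\phi f\,d\mu^V_E = 0$. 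Approximating $f$ in $L^2$ by $\mathcal{B}^0_{V,\mathcal{I}}$-simple functions and expanding in disjunctive normal form, this reduces further to showing $\int\phi\prod_{I\in\mathcal{I}'}\chi_{B_I}\,d\mu^V_E = 0$ for every finite $\mathcal{I}'\subseteq\mathcal{I}$ and every choice of $B_I\in\mathcal{B}^0_{V,I}$.

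The crucial step is to partition $\mathcal{I}' = \mathcal{I}'_0\sqcup\mathcal{I}'_1$ according to whether $I\subseteq J$, and to set $f_J := \prod_{I\in\mathcal{I}'_0}\chi_{B_I}$. The key observation is that $f_J$ is $\mathcal{B}_{V,\mathcal{I}\wedge\{J\}}$-measurable, since each $I\in\mathcal{I}'_0$ satisfies $I=I\cap J\in\mathcal{I}\wedge\{J\}$. Multiplying $\phi$ by $f_J$ therefore preserves $L^2$-orthogonality to $\mathcal{B}_{V,\mathcal{I}\wedge\{J\}}$: for any bounded $h\in L^\infty(\mathcal{B}_{V,\mathcal{I}\wedge\{J\}})$, $\int\phi f_J h\,d\mu^V_E = \int\phi(f_Jh)d\mu^V_E = 0$, because $f_Jh$ remains $\mathcal{B}_{V,\mathcal{I}\wedge\{J\}}$-measurable. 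When $\mathcal{I}'_1 = \emptyset$ the whole product equals $f_J$ and the desired vanishing is immediate. When $\mathcal{I}'_1\neq\emptyset$, every $I\in\mathcal{I}'_1$ satisfies $I\cap J\subsetneq I$, so the hypothesis of $J$-regularity is met; applying $J$-regularity with the function $g := \phi f_J\in L^\infty(\mathcal{B}_{V,J})$, index set $\mathcal{I}'_1$, and functions $\chi_{B_I}$ rewrites the integral in terms of $\mathbb{E}(\phi f_J\mid\mathcal{B}_{V,\mathcal{I}'_1\wedge\{J\}})$, which vanishes because $\mathcal{I}'_1\wedge\{J\}\subseteq\mathcal{I}\wedge\{J\}$ and $\phi f_J\perp\mathcal{B}_{V,\mathcal{I}\wedge\{J\}}$.

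The main obstacle I expect is seeing how to invoke $J$-regularity, since its hypothesis $I\cap J\subsetneq I$ explicitly excludes the factors with $I\subseteq J$. The resolution is to absorb exactly those ``internal to $J$'' factors into the integrand and verify that they do not spoil the orthogonality of $\phi$ to $\mathcal{B}_{V,\mathcal{I}\wedge\{J\}}$; once that is arranged, $J$-regularity handles the remaining factors cleanly.
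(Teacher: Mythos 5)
Your proposal is correct and follows essentially the same strategy as the paper: both reduce the nontrivial inclusion to showing that a $\mathcal{B}_{V,J}$-measurable function orthogonal to $\mathcal{B}_{V,\mathcal{I}\wedge\{J\}}$ integrates to zero against products $\prod_I f_I$, which is obtained from $J$-regularity. In fact your treatment is a bit more careful than the paper's: the paper applies $J$-regularity directly with the full family $\mathcal{I}$, even though the definition of $J$-regularity requires $I\cap J\subsetneq I$ for each $I\in\mathcal{I}$, whereas you explicitly split off the factors with $I\subseteq J$ into $f_J$, verify they preserve orthogonality to $\mathcal{B}_{V,\mathcal{I}\wedge\{J\}}$, and then apply regularity only to the remaining $\mathcal{I}'_1$ — filling in a step the paper leaves implicit.
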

\begin{proof}
By definition, we have $\mathcal{B}_{V,\mathcal{I}\wedge J}\subseteq\mathcal{B}_{V,\mathcal{I}}\cap\mathcal{B}_{V,J}$.

For the converse, suppose $B\in\mathcal{B}_{V,\mathcal{I}}\cap\mathcal{B}_{V,J}$.  Let $g=\chi_B-\mathbb{E}(\chi_B\mid\mathcal{B}_{V,\mathcal{I}\wedge J})$; since both $\chi_B$ and $\mathbb{E}(\chi_B\mid\mathcal{B}_{V,\mathcal{I}\wedge J})$ are each both $\mathcal{B}_{V,J}$-measurable and $\mathcal{B}_{\mathcal{I}}$-measurable, $g$ is as well.  Since by definition $\mathbb{E}(g\mid\mathcal{B}_{\mathcal{I}\wedge J})=0$, by the previous lemma, $\mathbb{E}(g\mid\mathcal{B}_{V,\mathcal{I}})=0$.  Since $g$ is $\mathcal{B}_{V,\mathcal{I}}$-measurable, $g=\mathbb{E}(g\mid\mathcal{B}_{V,\mathcal{I}})=0$ (as $L^2$ functions, of course).  Therefore $\chi_B=\mathbb{E}(\chi_B\mid\mathcal{B}_{V,\mathcal{I}\wedge J})$ (again, as $L^2$ functions), and so $B$ is within measure $0$ of being $\mathcal{B}_{V,\mathcal{I}}\cap\mathcal{B}_{V,J}$-measurable.
\end{proof}

\begin{lemma}
  For any $\mathcal{I},\mathcal{J}\subseteq\mathcal{P}(V)$, if $\mu^V_E$ has $J$-regularity for every $J\in\mathcal{J}$ then $\mathcal{B}_{V,\mathcal{I}\wedge \mathcal{J}}$ is $\mathcal{B}_{V,\mathcal{I}}\cap\mathcal{B}_{V,\mathcal{J}}$ up to $\mu^V_E$-measure $0$.\label{algebra_wedge}
\end{lemma}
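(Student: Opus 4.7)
The plan is to proceed by induction on $|\mathcal{J}|$, the base case $|\mathcal{J}| = 1$ being the immediately preceding lemma. For the inductive step, write $\mathcal{J} = \mathcal{J}' \cup \{J_0\}$ with $|\mathcal{J}'| < |\mathcal{J}|$. The inclusion $\mathcal{B}_{V,\mathcal{I}\wedge\mathcal{J}} \subseteq \mathcal{B}_{V,\mathcal{I}} \cap \mathcal{B}_{V,\mathcal{J}}$ is immediate from the definitions, so only the reverse inclusion requires work.

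Following the template of the previous lemma, the reverse inclusion reduces to a generalized ``$\mathcal{J}$-regularity'' statement: for every $h \in L^\infty(\mathcal{B}_{V,\mathcal{J}})$ and $f_I \in L^\infty(\mathcal{B}_{V,I})$,
\[\int h \prod_{I\in\mathcal{I}} f_I \, d\mu^V_E \;=\; \int \mathbb{E}(h \mid \mathcal{B}_{V,\mathcal{I}\wedge\mathcal{J}}) \prod_I f_I \, d\mu^V_E.\]
Indeed, setting $g' = g - \mathbb{E}(g \mid \mathcal{B}_{V,\mathcal{I}\wedge\mathcal{J}})$ for $g \in L^\infty(\mathcal{B}_{V,\mathcal{I}}\cap\mathcal{B}_{V,\mathcal{J}})$ gives $g' \in \mathcal{B}_{V,\mathcal{I}}$ with $\mathbb{E}(g' \mid \mathcal{B}_{V,\mathcal{I}\wedge\mathcal{J}}) = 0$; applying the identity with $h = g'$ yields $\mathbb{E}(g' \mid \mathcal{B}_{V,\mathcal{I}}) = 0$, whence $g' = 0$ almost everywhere.

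To establish this identity, I would first invoke the earlier corollary to assume $\mathcal{J}$ is an antichain (so $J \not\subseteq J_0$ for every $J \in \mathcal{J}'$); then by $L^2$-density of finite sums of products and continuity of conditional expectation, reduce to $h = h_{J_0} \cdot h'$ with $h_{J_0} \in L^\infty(\mathcal{B}_{V,J_0})$ and $h' = \prod_{J\in\mathcal{J}'} h_J$. Apply the inductive hypothesis to the pair $(\mathcal{I}\cup\{J_0\},\mathcal{J}')$: the resulting $\mathcal{J}'$-regularity permits replacing $h'$ in the integral by the projection $\sigma := \mathbb{E}(h' \mid \mathcal{B}_{V,(\mathcal{I}\cup\{J_0\})\wedge\mathcal{J}'})$. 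Approximating $\sigma$ by finite sums of products decomposes each summand into a factor in $\mathcal{B}_{V,\mathcal{I}\wedge\mathcal{J}'} \subseteq \mathcal{B}_{V,\mathcal{I}}$ and a factor in $\mathcal{B}_{V,\{J_0\}\wedge\mathcal{J}'} \subseteq \mathcal{B}_{V,J_0}$. Absorbing the latter factors into $h_{J_0}$ and applying $J_0$-regularity then produces a further projection onto $\mathcal{B}_{V,\mathcal{I}\wedge\{J_0\}}$. Because $\mathcal{I}\wedge\mathcal{J} = (\mathcal{I}\wedge\mathcal{J}') \cup (\mathcal{I}\wedge\{J_0\})$, both projections land in $\mathcal{B}_{V,\mathcal{I}\wedge\mathcal{J}}$, and by the tower property the composition computes $\mathbb{E}(h \mid \mathcal{B}_{V,\mathcal{I}\wedge\mathcal{J}})$, completing the identity.

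The main obstacle is the bookkeeping in this two-step reduction: one must verify that the approximation of $\sigma$ separates cleanly into factors either contained in $J_0$ (to be absorbed into $h_{J_0}$) or not (to remain as test functions for $J_0$-regularity), and that the sub-$\sigma$-algebras produced by the two successive projections compose to exactly $\mathcal{B}_{V,\mathcal{I}\wedge\mathcal{J}}$. The antichain reduction via the earlier corollary is what keeps this manageable, by guaranteeing that every factor inherited from $\mathcal{J}'$ automatically satisfies the ``not contained in $J_0$'' condition required by $J_0$-regularity, so the two applications can be chained without ambiguity.
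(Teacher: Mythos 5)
Your proposal is sound, but it takes a genuinely different route than the paper. The paper works directly on the projection side: assuming $\mathcal{J}$ is an antichain, it peels off one $J$ at a time by decomposing $\mathbb{E}(f\mid\mathcal{B}_{V,\mathcal{J}\cup\{J\}})$ via the identity
\[\mathbb{E}(f\mid\mathcal{B}_{V,\mathcal{J}\cup\{J\}})=\mathbb{E}(f\mid\mathcal{B}_{V,\mathcal{J}})+\mathbb{E}(f\mid\mathcal{B}_J)-\mathbb{E}(f\mid\mathcal{B}_{V,\mathcal{J}}\cap\mathcal{B}_J),\]
then shows (using the IH and the singleton lemma) that each term on the right lands in $\mathcal{B}_{V,\mathcal{I}\wedge(\mathcal{J}\cup\{J\})}$. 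You instead work on the dual (integral/test-function) side: you formulate the ``generalized $\mathcal{J}$-regularity'' identity $\int h\prod_I f_I = \int \mathbb{E}(h\mid\mathcal{B}_{V,\mathcal{I}\wedge\mathcal{J}})\prod_I f_I$, factor $h$ over $\mathcal{J}$, fold $h_{J_0}$ into the test side of the IH applied to $(\mathcal{I}\cup\{J_0\},\mathcal{J}')$, then break the resulting projection $\sigma$ into $(\mathcal{I}\wedge\mathcal{J}')$- and $(\{J_0\}\wedge\mathcal{J}')$-factors and apply $J_0$-regularity. This is a real structural difference: the paper leans on an inclusion--exclusion formula for conditional expectations onto joins, while you lean on density of products plus iterated application of the regularity identity. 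Your formulation has the advantage that the quantity being inducted on is already the operative integral identity (the thing actually used downstream), rather than the $\sigma$-algebra equality it implies; a consequence is that you should make explicit that your induction is on the \emph{identity} itself for $(\mathcal{K},\mathcal{J}')$ with arbitrary $\mathcal{K}$, not on the $\sigma$-algebra statement, since the latter does not directly hand you the former.

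One small inaccuracy worth correcting: you say the antichain reduction ``guarantees that every factor inherited from $\mathcal{J}'$ automatically satisfies the `not contained in $J_0$' condition required by $J_0$-regularity.'' This is not quite right: after applying the IH, the surviving test factors are $\mathcal{B}_{V,\mathcal{I}\wedge\mathcal{J}'}$-measurable, and elements $K\in\mathcal{I}\wedge\mathcal{J}'$ satisfy $K\subseteq I\cap J'$, which can perfectly well be a subset of $J_0$ even when $J'\not\subseteq J_0$. What actually saves the step is that the $J_0$-regularity identity extends harmlessly to test-function index sets containing $I\subseteq J_0$, because any such $f_I$ is $\mathcal{B}_{V,\mathcal{I}\wedge\{J_0\}}$-measurable and can be absorbed into the conditioned side (using $\mathbb{E}(g f_I\mid\mathcal{B}_{V,\mathcal{I}\wedge\{J_0\}})=f_I\,\mathbb{E}(g\mid\mathcal{B}_{V,\mathcal{I}\wedge\{J_0\}})$). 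The antichain assumption is still the right normalization — it is what makes $\mathcal{J}'$ a smaller antichain for the IH — but the reason the second regularity application goes through is the absorption, not the antichain.
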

\begin{proof}
  The direction $\mathcal{B}_{V,\mathcal{I}\wedge \mathcal{J}}\subseteq\mathcal{B}_{V,\mathcal{I}}\cap\mathcal{B}_{V,\mathcal{J}}$ is immediate from the definition.

For the converse, we may assume $\mathcal{J}=\{J_1,\ldots,J_n\}$ where $i\neq j$ implies $J_i\not\subseteq J_j$, and we proceed by induction on $n$.  When $n=1$ this is just the previous lemma.  Suppose the claim holds for $\mathcal{J}$ and we wish to show it for $\mathcal{J}\cup\{J\}$.  Note that
\[\mathcal{B}_{\mathcal{I}\wedge(\mathcal{J}\cup\{J\})}=\mathcal{B}_{(\mathcal{I}\wedge\mathcal{J})\cup(\mathcal{I}\wedge J)}=\mathcal{B}_{\mathcal{I}\wedge\mathcal{J}}\uplus\mathcal{B}_{\mathcal{I}\wedge J}.\]

It suffices to show that whenever $f$ is $\mathcal{B}_{V,\mathcal{I}}$-measurable then $\mathbb{E}(f\mid\mathcal{B}_{V,\mathcal{J}\cup\{J\}})$ is $\mathcal{B}_{V,\mathcal{I}\wedge(\mathcal{J}\cup\{J\})}$-measurable.  For any $f$, we have
\begin{align*}
  \mathbb{E}(f\mid\mathcal{B}_{V,\mathcal{J}\cup\{J\}})
&=\mathbb{E}(f\mid\mathcal{B}_{V,\mathcal{J}}\uplus\mathcal{B}_J)\\
&=\mathbb{E}(f\mid\mathcal{B}_{V,\mathcal{J}})+\mathbb{E}(f\mid\mathcal{B}_J)-\mathbb{E}(f\mid\mathcal{B}_{V,\mathcal{J}}\cap\mathcal{B}_J).\\
\end{align*}
When $f$ is $\mathcal{B}_{V,\mathcal{I}}$-measurable, $\mathbb{E}(f\mid\mathcal{B}_{V,\mathcal{J}}) -\mathbb{E}(f\mid\mathcal{B}_{V,\mathcal{J}}\cap\mathcal{B}_J)$ is $\mathcal{B}_{V,\mathcal{I}}\cap\mathcal{B}_{V,\mathcal{J}}$-measurable, and therefore, by the inductive hypothesis, $\mathcal{B}_{V,\mathcal{I}\wedge\mathcal{J}}$-measurable.  By the previous lemma, $\mathbb{E}(f\mid\mathcal{B}_J)$ is $\mathcal{B}_{V,\mathcal{I}\wedge J}$-measurable.  In particular, this means $\mathbb{E}(f\mid\mathcal{B}_{V,\mathcal{J}\cup\{J\}})$ is $\mathcal{B}_{\mathcal{I}\wedge\mathcal{J}}\uplus\mathcal{B}_{\mathcal{I}\wedge J}=\mathcal{B}_{\mathcal{I}\wedge(\mathcal{J}\cup\{J\})}$-measurable.
\end{proof}

\begin{lemma}\label{distribute_product}
  If $\mu^V_E$ has $J$-regularity for every $J\in\mathcal{I}^{\bot}$, $\mathcal{B}_{V,\mathcal{I}}$ is $\bigcap_{J\in\mathcal{I}^{\bot}}\mathcal{B}_{V,J^-}$ up to $\mu^V_E$-measure $0$.
\end{lemma}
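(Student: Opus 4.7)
The plan is to prove both $\sigma$-algebra inclusions. The containment $\mathcal{B}_{V,\mathcal{I}}\subseteq\bigcap_{J\in\mathcal{I}^\bot}\mathcal{B}_{V,J^-}$ is immediate from the definitions: for each $I\in\mathcal{I}$ and each $J\in\mathcal{I}^\bot$, the defining property of $\mathcal{I}^\bot$ gives $J\not\subseteq I$, i.e.\ $I\in J^-$, so $\mathcal{B}_{V,I}\subseteq\mathcal{B}_{V,J^-}$ for every such $J$.

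For the reverse, the crucial combinatorial input is the identity
\[\bigwedge_{J\in\mathcal{I}^\bot}J^-=\overline{\mathcal{I}},\]
where $\overline{\mathcal{I}}:=\{K\subseteq V:K\subseteq I\text{ for some }I\in\mathcal{I}\}$ is the downward closure of $\mathcal{I}$. Unpacking the iterated wedge, $K$ lies in its left-hand side iff no $J\in\mathcal{I}^\bot$ is contained in $K$ (taking the common extension $I_J = K$ for each $J$). The forward containment is immediate (if $K\subseteq I\in\mathcal{I}$ and $J\subseteq K$, then $J\subseteq I$, contradicting $J\in\mathcal{I}^\bot$). For the reverse, given $K\notin\overline{\mathcal{I}}$, pass to a $\subseteq$-minimal subset $K_0\subseteq K$ that is itself not contained in any $I\in\mathcal{I}$; minimality forces every proper subset of $K_0$ to lie in $\overline{\mathcal{I}}$, so $K_0\in\mathcal{I}^\bot$, and $K_0\subseteq K$ witnesses $K\notin\bigwedge_J J^-$. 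Since $\overline{\mathcal{I}}$ and $\mathcal{I}$ generate the same $\sigma$-algebra, $\mathcal{B}_{V,\bigwedge_J J^-}=\mathcal{B}_{V,\mathcal{I}}$, and the claim reduces to identifying $\bigcap_{J\in\mathcal{I}^\bot}\mathcal{B}_{V,J^-}$ with $\mathcal{B}_{V,\bigwedge_J J^-}$ up to $\mu^V_E$-measure zero.

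Enumerating $\mathcal{I}^\bot=\{J_1,\ldots,J_n\}$, I would prove this final equality by induction on $n$, at each step invoking Lemma \ref{algebra_wedge} to fold the running intersection $\bigcap_{j\leq i}\mathcal{B}_{V,J_j^-}$ against the next factor $\mathcal{B}_{V,J_{i+1}^-}$ into a single wedge algebra, and then verifying that $\bigl(\bigwedge_{j\leq i} J_j^-\bigr)\wedge J_{i+1}^- = \bigwedge_{j\leq i+1} J_j^-$. The main obstacle is that Lemma \ref{algebra_wedge} as stated would demand $K$-regularity for every $K\in J_{i+1}^-$, which is far stronger than the $J$-regularity for $J\in\mathcal{I}^\bot$ that we have by hypothesis. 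Overcoming this requires looking inside the base case underlying Lemma \ref{algebra_wedge}: there, $J$-regularity is applied only to integrate a $\mathcal{B}_{V,J}$-measurable function against a product of $f_I$'s with $I\not\subseteq J$, and any factor $f_I$ with $I\subseteq J_{i+1}$ can be absorbed into the $\mathcal{B}_{V,J_{i+1}}$-measurable piece. After this absorption, only the $J_{i+1}$-regularity itself is ever invoked at step $i{+}1$, which is exactly what the hypothesis of the lemma provides.
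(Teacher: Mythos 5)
Your overall route matches the paper's: prove the combinatorial identity $\bigwedge_{J\in\mathcal{I}^\bot}J^- = \overline{\mathcal{I}}$ (the paper writes $=\mathcal{I}$, but its calculation, like yours, really handles the downward closure), and then convert $\bigcap_{J}\mathcal{B}_{V,J^-}$ into $\mathcal{B}_{V,\bigwedge_J J^-}$ by repeated use of Lemma~\ref{algebra_wedge}. The combinatorial identity is proved the same way in both. The paper simply asserts that the algebra step ``follows by repeated application of Lemma~\ref{algebra_wedge}'' without comment on whether the regularity hypotheses line up, so your flagging of that step is a genuine and worthwhile observation.

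The difficulty you flag is real, but the absorption argument you propose does not close it. When Lemma~\ref{algebra_wedge} is applied with second factor $\mathcal{J}=J_{i+1}^-$, its proof reduces $\mathcal{J}$ to antichain form and feeds each antichain element to the singleton lemma; the antichain form of $J_{i+1}^-$ is $\{V\setminus\{j\}:j\in J_{i+1}\}$, so what actually gets invoked is $(V\setminus\{j\})$-regularity for each $j\in J_{i+1}$. These sets are not subsets of $J_{i+1}$ (since $j\in J_{i+1}$ but $j\notin V\setminus\{j\}$), nor supersets, so absorbing factors $f_I$ with $I\subseteq J_{i+1}$ does not convert an appeal to $(V\setminus\{j\})$-regularity into one to $J_{i+1}$-regularity. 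There is also a more basic mismatch in your fix: the function $g$ being controlled in the intersection is $\mathcal{B}_{V,J_{i+1}^-}$-measurable, while $J_{i+1}$-regularity speaks only about $\mathcal{B}_{V,J_{i+1}}$-measurable $g$'s, and these are very different algebras ($\mathcal{B}_{V,J_{i+1}^-}$ is the much larger one). So as written, the inductive step does not go through from the stated hypothesis ``$J$-regularity for $J\in\mathcal{I}^\bot$''; one either needs to strengthen the hypothesis (for example, to $K$-regularity for all $K$ of size $|V|-1$, or for all $K\subseteq V$, which is what is actually available in the places the lemma gets used), or supply a different argument that does not pass through the antichain form of $J_i^-$.
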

\begin{proof}
  We have $\bigcap_{J\in\mathcal{I}^{\bot}}\mathcal{B}_{V,J^-}$ is $\mathcal{B}_{V,\bigwedge_{J\in \mathcal{I}^{\bot}}J^-}$ up to $\mu^V_E$-measure $0$ (it is easy to see that $\wedge$ is associative and commutative, so this follows by repeated application of Lemma \ref{algebra_wedge}).  We need only check that $\bigwedge_{J\in \mathcal{I}^{\bot}}J^-=\mathcal{I}$.

If $I\in\mathcal{I}$ (or even $I\subseteq I'\in\mathcal{I}$) then for every $J\in\mathcal{I}^{\bot}$, we have $J\not\subseteq I$, and therefore $I\in J^-$, and therefore $I\in\bigwedge_{J\in \mathcal{I}^{\bot}}J^-$.  Conversely, if there is no $I'\in\mathcal{I}$ such that $I\subseteq I'$ then there is a $J\subseteq I$ such that $J\in\mathcal{I}^{\bot}$, and therefore no $J'\in J^-$ such that $I\subseteq J'$, and therefore $I\not\in\bigwedge_{J\in \mathcal{I}^{\bot}}J^-$.
\end{proof}

In the following lemma we have to directly appeal to the definability structure of our $\sigma$-algebras.  This is for a good reason: the statement would not be true if we replaced our $\sigma$-algebras with, say, simple product algebras.

\begin{lemma}\label{converse_nonprincipal_base}
  Suppose that for $J'\subseteq J$, $U^{J'}_\infty(\mu^{J'}_{E})$ is characteristic and that for $\mu^{V\setminus J'}_E$-almost-every $x_{V'}$, $U^{J'}_\infty(\mu^{J'}_{E,x_{V'}})$ is characteristic.  Then $U^{V,J}_\infty(\mu^V_E)$ is characteristic.
\end{lemma}
\begin{proof}
Suppose $\|f\|_{U^{V,J}_\infty(\mu^V_E)}>0$, so, setting $V'=V\setminus J$, also
\[0<\|f\|_{U^{V,J}_\infty(\mu^V_E)}^{2^{|J|}}=\int \|f\|_{U^J_\infty(\mu^J_{E,x_{V'}})}^{2^{|J|}}\,d\mu^{V'}_E.\]

There must be a set $S_0\subseteq M^{V'}$ of positive measure such that, for $x_{V'}\in S_0$, $\|f\|_{U^J_\infty(\mu^J_{E,x_{V'}})}^{2^{|J|}}>0$.  Since almost every $U^J_\infty(\mu^J_{E,x_{V'}})$ is characteristic, for almost every $x_{V'}\in S_0$, we have $\|\mathbb{E}(f\mid\mathcal{B}_{J,<J})\|_{L^2(\mu^J_{E,x_{V'}})}>0$.  This means that for almost every $x_{V'}\in S_0$, we may choose a set $B(x_{V'},\vec a)\in\mathcal{B}^0_{J,<J}$ such that $|\int f\chi_{B(x_{V'},\vec a)}\, d\mu^J_{E,x_{V'}}|>0$.  Since $\mathcal{B}^0_{J,<J}$ was chosen to be the collection of definable sets, and there are only countably many formulas, by passing to a smaller set of positive measure we may choose a single formula $B$, independent of $x_{V'}$, so that for each $x_{V'}\in S_0$ there are parameters $a_Q(x_{V'})$ so that $|\int f\chi_{B(x_{V'},a_Q(x_{V'}))}\, d\mu^J_{E,x_{V'}}|>0$.  (Recall our notation---$B\in\mathcal{B}^0_{J\cup Q}$, and then for each $x_{V'}$ we specialize to the slice $a_Q(x_{V'})$.  There are uncountably many possible choices for $a_Q(x_{V'})$, so we cannot assume $a_Q$ is independent of $x_{V'}$.) 

We may choose an $\epsilon>0$, an approximation of $f$ by a simple function $f'$, and a set $S_1\subseteq S_0$ of positive measure so that for $x_{V'}\in S_1$, $|\int f'\chi_{B(x_{V'},a_Q(x_{V'}))}\, d\mu^J_{E,x_{V'}}|\geq\epsilon$.  Since $f'$ is simple, $f'$ is itself defined using finitely many formulas, which in turn have finitely many parameters $a_W$.

Recall the distinguished function symbols $\mathbf{max}^{E,J,f',B}_Q$; these symbols choose values $a_Q(x_{V'})$ maximizing the value of $|\int f'\chi_{B(x_{V'}, a_Q(x_{V'}))}\,d\mu^J_{E,x_{V'}}|$.  So, replacing $B(x_{V'}, a_Q(x_{V'}))$ with $\hat B(x_{V'},a_W)=B(x_{V'},\mathbf{max}^{E,J,f',B}_Q(x_{V'},a_W))$,
\[\left|\int f'\chi_{\hat B(x_{V'},a_W)}\,d\mu^J_{E,x_{V'}}\right|\geq\left| \int f'\chi_{B(x_{V'},a_Q(x_{V'}))}\,d\mu^J_{E,x_{V'}}\right|.\]
In particular, for each $x_{V'}\in S_1$, $\left|\int f'\chi_{\hat B(x_{V'},a_W)}\,d\mu^J_{E,x_{V'}}\right|\geq\epsilon$.  Note that $\hat B(x_{V'},a_W)\in\mathcal{B}^0_{J,<J}$ (viewing $\hat B(x_{V'},a_W)$ as a set of $J$-tuples) and therefore $\hat B(a_W)\in\mathcal{B}^0_{J,J^-}$ (viewing $\hat B(a_W)$ as a set of $V$-tuples).

We may partition $S_1=S_1^+\cup S_1^-$ where $x_{V'}\in S_1^+$ exactly when $\int f'\chi_{\hat B(x_{V'},a_W)}\,d\mu^J_{E,x_{V'}}\geq\epsilon$.  Clearly at least one of $S_1^+$ and $S_1^-$ has measure $\geq\mu^{V'}_E(S_1)/2$; without loss of generality, we assume $S_1^+$ does.  Since $f'$ is simple, we have $f'=\sum_{i\leq n}\alpha_i \chi_{C_i}$.  We may write a large union $D$ of sets consisting of those $x_{V'}$ such that 
\begin{align*}
&\left(m^J_{E,x_{V'}}(C_1(x_{V'},a_W)\cap \hat B(x_{V'},a_W))<\beta_1\wedge
m^J_{E,x_{V'}}(C_1(x_{V'},a_W)\cap \hat B(x_{V'},a_W))>\beta'_1\right)\\
\wedge&\cdots\\
\wedge&\left(m^J_{E,x_{V'}}(C_n(x_{V'},a_W)\cap \hat B(x_{V'},a_W))<\beta_n\wedge
m^J_{E,x_{V'}}(C_n(x_{V'},a_W)\cap \hat B(x_{V'},a_W))>\beta'_n\right)\\
\end{align*}
so that $\mu^{V'}_E(D\cap S_1^+)\geq(1-\delta)\mu^{V'}_E(S_1^+)$ and every element of $D$ satisfies
\[\int f'\chi_{\hat B(x_{V'},a_W)}\, d\mu^J_{E,x_{V'}}>\epsilon/2.\]
The formula defining this set has only free variables $x_{V'}$, so $D$ is {$\mathcal{B}_{V,V'}$\nobreakdash-measurable}.  Then
\[\int f'\chi_{\hat B(x_{V'},a_W)}\chi_D\, d\mu^V_E d\mu=\iint f'\chi_{\hat B(x_{V'},a_W)}\chi_D d\mu^J_{E,x_{V'}}\,d\mu^{V'}_E>\epsilon(1-\delta)\mu^{V'}_E(S_1)/2.\]
Since we chose $f'$ to be an arbitrarily close approximation of $f$, we may assume that $\|f-f'\|_{L^2(\mu^V_E)}<\epsilon(1-\delta)\mu^{V'}_E(S_1)/4$, and so we have
\[\int f\chi_{\hat B(x_{V'},a_W)}\chi_D\, d\mu^V_E d\mu>\epsilon(1-\delta)\mu^{V'}_E(S_1)/4>0.\]
Since $\chi_{\hat B(x_{V'},a_W)}\chi_D$ is $\mathcal{B}_{V,J^-}$-measurable, we are finished.
\end{proof}

\begin{theorem}\label{converse_nonprincipal}
Suppose that for every $J\in\mathcal{I}^\bot$ and every $J'\subseteq J$, $U^{J'}_\infty(\mu^{J'}_{E})$ is characteristic and that for $\mu^{V\setminus J'}_E$-almost-every $x_{V'}$, $U^{J'}_\infty(\mu^{J'}_{E,x_{V'}})$ is characteristic.  Then $U^{V,\mathcal{I}^\bot}_\infty(\mu^V_E)$ is characteristic.
\end{theorem}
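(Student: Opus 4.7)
The strategy is to prove the contrapositive: if $\mathbb{E}(f\mid\mathcal{B}_{V,\mathcal{I}})=0$ then $||f||_{U^{V,\mathcal{I}^\bot}_\infty(\mu^V_E)}=0$. The first step is to bootstrap the characteristic hypotheses from principal to non-principal seminorms on the smaller ambient sets inside $\mathcal{I}^\bot$. For any $J\in\mathcal{I}^\bot$ and any $\mathcal{J}'\subseteq\mathcal{P}(J)$, every pair $K'\subseteq K\in(\mathcal{J}')^\bot$ has $K'\subseteq J$, so the outer hypothesis supplies exactly the principal-characteristic inputs (including on the relevant slice measures) needed to invoke the present theorem recursively on $\mu^J_E$. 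Granting this recursive application, every $U^{J,\mathcal{J}'^\bot}_\infty(\mu^J_E)$ is characteristic, so Lemma \ref{char_implies_regular} yields $J$-regularity of $\mu^V_E$ for each $J\in\mathcal{I}^\bot$. Lemma \ref{distribute_product} then identifies $\mathcal{B}_{V,\mathcal{I}}$ with $\bigcap_{J\in\mathcal{I}^\bot}\mathcal{B}_{V,J^-}$ modulo $\mu^V_E$-null sets, so our hypothesis becomes $\mathbb{E}(f\mid\bigcap_{J\in\mathcal{I}^\bot}\mathcal{B}_{V,J^-})=0$.

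Enumerate $\mathcal{I}^\bot=\{J_1,\ldots,J_n\}$ and iterate: set $h_0=f$ and $h_k=\mathbb{E}(h_{k-1}\mid\mathcal{B}_{V,J_k^-})$. The telescoping identity $f=\sum_{k=1}^n(h_{k-1}-h_k)+h_n$ furnishes a candidate decomposition in which each difference satisfies $\mathbb{E}(h_{k-1}-h_k\mid\mathcal{B}_{V,J_k^-})=0$ by construction. For any such $g$, the Fubini identity $||g||_{U^{V,J}_\infty(\mu^V_E)}^{2^{|J|}}=\int||g(x_{V\setminus J},\cdot)||_{U^J_\infty(\mu^J_{E,x_{V\setminus J}})}^{2^{|J|}}\,d\mu^{V\setminus J}_E$, combined with the observation that the slice at $x_{V\setminus J}$ of a $\mathcal{B}_{V,J^-}$-measurable function is $\mathcal{B}_{J,<J}$-measurable, shows that $\mathbb{E}(g(x_{V\setminus J},\cdot)\mid\mathcal{B}_{J,<J})=0$ in $\mu^J_{E,x_{V\setminus J}}$ for $\mu^{V\setminus J}_E$-a.e.\ $x_{V\setminus J}$. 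The slice characteristic hypothesis forces the integrand to vanish almost everywhere, so $||h_{k-1}-h_k||_{U^{V,J_k}_\infty(\mu^V_E)}=0$.

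The main obstacle is showing that the residual $h_n$ vanishes. I expect this to follow from commutativity of the conditional-expectation operators corresponding to the algebras $\mathcal{B}_{V,J^-}$—the same commutation underwriting the inclusion--exclusion identity used in the proof of Lemma \ref{algebra_wedge}—which in turn rests on the Fubini structure of canonical measures together with the $J$-regularity obtained in the first paragraph. Granting commutativity, iterated projections collapse to the single projection onto the intersection, so $h_n=\mathbb{E}(f\mid\bigcap_{J\in\mathcal{I}^\bot}\mathcal{B}_{V,J^-})=0$. If commutativity is not directly available, I would fall back on iterating the cycle of projections and invoking von Neumann's alternating-projection theorem to drive the $L^2$-norm of the residual $r$ to zero, then absorb its contribution to the infimum via a Cauchy--Schwarz estimate of the shape $||r||_{U^{V,J}_\infty}^{2^{|J|}}\leq||r||_{L^\infty}^{2^{|J|}-2}||r||_{L^2}^2$ on the defining integral.

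With $h_n=0$, the decomposition $f=\sum_{k=1}^n(h_{k-1}-h_k)$ presents $f$ as a finite sum of functions each annihilated by some $U^{V,J_k}_\infty$, so in the defining formula
\[||f||_{U^{V,\mathcal{I}^\bot}_\infty(\mu^V_E)}=\inf\sum_i\left(\prod_{J\in\mathcal{I}^\bot}||f_i||_{U^{V,J}_\infty(\mu^V_E)}^{2^{|J|}}\right)^{1/\sum_{J\in\mathcal{I}^\bot}2^{|J|}}\]
each term in the sum vanishes, giving $||f||_{U^{V,\mathcal{I}^\bot}_\infty(\mu^V_E)}=0$ as desired.
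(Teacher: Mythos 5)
Your outer strategy -- telescoping the projections $h_k=\mathbb{E}(h_{k-1}\mid\mathcal{B}_{V,J_k^-})$, identifying $\mathcal{B}_{V,\mathcal{I}}$ with $\bigcap_J\mathcal{B}_{V,J^-}$ via Lemmas \ref{char_implies_regular} and \ref{distribute_product}, and absorbing the differences into the infimum defining $U^{V,\mathcal{I}^\bot}_\infty$ -- is exactly the paper's treatment of the general (non-singleton) case, and that part is fine. The paper also sets up the recursion by a main induction on $|V|$, matching your recursive invocation.

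The genuine gap is in the step you treat as routine: ``the slice characteristic hypothesis forces the integrand to vanish almost everywhere, so $\|h_{k-1}-h_k\|_{U^{V,J_k}_\infty(\mu^V_E)}=0$.'' Unwound, this is the claim that $\mathbb{E}(g\mid\mathcal{B}_{V,J^-})=0$ implies $\mathbb{E}(g(x_{V'},\cdot)\mid\mathcal{B}_{J,<J})=0$ in $\mu^J_{E,x_{V'}}$ for $\mu^{V'}_E$-a.e.\ $x_{V'}$. That implication does not follow merely because slices of $\mathcal{B}_{V,J^-}$-measurable sets are $\mathcal{B}_{J,<J}$-measurable: the function $\tilde g(x_{V'},x_J)=\mathbb{E}(g(x_{V'},\cdot)\mid\mathcal{B}_{J,<J})(x_J)$ obtained by conditioning fiberwise has $\mathcal{B}_{J,<J}$-measurable slices, but need not itself be $\mathcal{B}_{V,J^-}$-measurable, because the parameters defining the witnessing sets may depend on $x_{V'}$ in a non-definable way and $\mathcal{B}_{V,J^-}$ is non-separable. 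Equivalently, if you tried to test against a single $\mathcal{B}_{V,J^-}$-measurable $h$ you would get an a.e.\ statement whose null set depends on $h$, and with uncountably many relevant $h$ you cannot intersect them. This is precisely what the singleton case $\mathcal{I}^\bot=\{J\}$ of the paper's proof is about, and it is the bulk of the argument: the paper works the contrapositive ($\|f\|_{U^{V,J}_\infty}>0$ implies a nonzero projection), reduces to a single defining formula by countability, and then uses the designated $\mathbf{max}^{E,J,f',B}_Q$ function symbols (Theorem~\ref{ultraproduct}(4)) to make the parameter choice $b^{x_{V'}}_Q$ depend definably on $x_{V'}$, assembling a single $\mathcal{B}_{V,J^-}$-measurable witness $\chi_{\hat B}\chi_D$. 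Nothing in your proposal replaces that construction, so the lemma you are implicitly invoking (``singleton $U^{V,J}_\infty$ is characteristic'') is unproved.

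By contrast, your flagged worry about the residual $h_n$ is the lesser issue. The paper simply asserts $f_r=\mathbb{E}(f\mid\bigcap_J\mathcal{B}_{V,J^-})$, which amounts to commutativity of the projections onto the $\mathcal{B}_{V,J^-}$; this commutativity is already being used implicitly in the inclusion--exclusion identity in the proof of Lemma~\ref{algebra_wedge}, and is underwritten by the $J$-regularity you have already established. Your alternating-projections fallback would also work but is not needed if you take the same implicit commutativity for granted that the rest of the paper does.
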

\begin{proof}
We proceed by main induction on $|V|$.  In particular, if $V\in\mathcal{I}^{\bot}$ then the claim is given by the assumption, so we may assume that every element $J\in\mathcal{I}^{\bot}$ has $|J|<|V|$, and so by the inductive hypothesis, $U^{J,\mathcal{J}^{\bot}}_\infty(\mu^J_E)$ is characteristic for any $\mathcal{J}^{\bot}$ with $\mathcal{J}\subseteq\mathcal{P}(J)$.  Therefore by Lemma \ref{char_implies_regular}, $\mu^V_E$ has $J$-regularity for each $J\in\mathcal{I}^\bot$.

Suppose $\|\mathbb{E}(f\mid\mathcal{B}_{V,\mathcal{I}})\|_{L^2(\mu^V_E)}=0$.  By Lemma \ref{distribute_product}, $\mathcal{B}_{V,\mathcal{I}}=\bigcap_{J\in\mathcal{I}^{\bot}}\mathcal{B}_{V,J^-}$, and so $\|\mathbb{E}(f\mid\bigcap_{J\in\mathcal{I}^{\bot}}\mathcal{B}_{V,J^-})\|_{L^2(\mu^V_E)}=0$.  Let $\mathcal{I}^{\bot}=\{J_1,\ldots,J_r\}$.  Then we may define a sequence $f_0=f$, $f_{i+1}=\mathbb{E}(f_i\mid\mathcal{B}_{V,J_{i+1}^-})$, 
\[f=f_r+(f_{r-1}-f_r)+(f_{r-2}-f_{r-1})+\cdots+ (f_0-f_1).\]
Since $f_r=\mathbb{E}(f\mid\bigcap_{J\in\mathcal{I}^{\bot}}\mathcal{B}_{V,J^-})$, we have $f_r=0$.  We therefore have
\[\|f\|_{U^{V,\mathcal{I}^{\bot}}_\infty(\mu^V_E)}\leq\sum_{i< r}\left(\prod_{J\in\mathcal{I}^{\bot}}\|f_i-f_{i+1}\|^{2^{|J|}}_{U^{V,J}_\infty(\mu^V_E)}\right)^{\frac{1}{\sum_{J\in\mathcal{J}}2^{|J|}}}.\]
For each $i<r$, $f_i-f_{i+1}=f_i-\mathbb{E}(f_i\mid\mathcal{B}_{V,J_{i+1}^-})$.  In particular, $\|\mathbb{E}(f_i-f_{i+1}\mid\mathcal{B}_{V,J_{i+1}^-})\|_{L^2(\mu^V_E)}=0$, and therefore by the previous lemma, $\|f_i-f_{i+1}\|_{U^{V,J_{i+1}}_\infty(\mu^V_E)}=0$.  But this means the whole sum is $0$, and therefore $\|f\|_{U^{V,\mathcal{J}}_\infty(\mu^V_E)}=0$.
\end{proof}

\section{Principal Seminorms are Characteristic}\label{random_measures}

We now turn to our final argument, showing that the principal norms are always characteristic.  The construction is notationally complicated, so we first illustrate the idea for the simplest interesting case: $V=\{v,w\}$ and $E=\{(v,w)\}$.  (And, for simplicity of notation, no background parameters.)  Suppose $f$ is an $L^\infty$ function with $||f||_{U^V_\infty(\mu^V_E)}>0$.  This is equivalent to having
\[\int f(x_v^0,x_w^0)f(x_v^0,x_w^1)f(x_v^1,x_w^0)f(x_v^1,x_w^1)d\mu^{V+V}_E>0.\]

We can define a function
\[G(x_w^0,x_w^1)=\int f(x_w^0,x_v^1)f(x_w^1,x_v^1)d\mu^v_{E,x^0_w\cup x^1_w}.\]
If we take all three coordinates ($x_w^0,x_w^1,x_v^1$) into account here, we are looking at the measure $\mu^{W^+}_{E'}$ where $W^+=\{(w,0),(w,1),(v,1)\}$ and $E'=\{((w,0),(v,1)),((w,1),(v,1))\}$.  If we take $W\subseteq W^+$ to be $\{(w,0),(w,1)\}$, we have that $\mu^W_{E',x_v^1}$ and $\mu^W_{\emptyset}$ are product measures, and therefore $G(x_w^0,x_w^1)$ is measurable with respect to $\mathcal{B}_{W,1}$.  This means that we may approximate $G(x_w^0,x_w^1)$ arbitrarily well (in the $L^2(\mu^W_\emptyset)$ norm) by sums of the form
\[\sum g_{0,n}(x_w^0)g_{1,n}(x_w^1).\]
We may further assume that the $g_{b,n}$ are $L^\infty(\mu^W_\emptyset)$ functions since $G$ is.  By assumption, we have
\begin{align*}
  0
&<\int f(x_v^0,x_w^0)f(x_v^0,x_w^1)f(x_v^1,x_w^0)f(x_v^1,x_w^1)d\mu^{V+V}_E\\
&=\int f(x_v^0,x_w^0)f(x_v^0,x_w^1)G(x_w^0,x_w^1)d\mu^{V+\{w\}}_E,
\end{align*}
and so also, taking a good enough approximation,
\[0<\int f(x_v^0,x_w^0)f(x_v^0,x_w^1)\sum g_{0,n}(x_w^0)g_{1,n}(x_w^1)d\mu^{V+\{w\}}_E.\]
In particular, for some $n$, we have
\[0<\int f(x_v^0,x_w^0)f(x_v,x_w^1)g_{0,n}(x_w^0)g_{1,n}(x_w^1)d\mu^{V+\{w\}}_E.\]
Dropping the $0$ superscript and setting $h(x_v)=\int f(x_v,x_w^1)g_{1,n}(x_w^1)d\mu^w_{E,x_v}$, we have
\[0<\int f(x_v,x_w)g_{0,n}(x_w)h(x_v)d\mu^{V}_E.\]
Since $g_{0,n}(x_w)h(x_v)$ is $\mathcal{B}_{V,<V}$-measurable by definition, it follows that $||\mathbb{E}(f\mid\mathcal{B}_{V,<V})||_{L^2(\mu^V_E)}>0$.

\begin{theorem}\label{converse_random}
Suppose $\mu$ is a canonical family of measures of degree $k$ and size $\sum_{e\in E}2^{2|e|}$.  Then $U^V_\infty(\mu^V_{E})$ is characteristic.
\end{theorem}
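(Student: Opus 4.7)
The plan is to prove the contrapositive via a dual-function argument. Assume $\|f\|_{U^V_\infty(\mu^V_E)} > 0$; the goal is to produce a $\mathcal{B}_{V,{<}V}$-measurable test function $h$ with $\int f h \, d\mu^V_E \neq 0$, which forces $\mathbb{E}(f \mid \mathcal{B}_{V,{<}V}) \neq 0$ in $L^2(\mu^V_E)$. The natural candidate is the dual function
\[g(x_V) := \mathbb{E}_{\mu^{V+V}_E}\Big[\prod_{\omega \in \{0,1\}^V,\, \omega \neq 0^V} f(x_V^\omega) \;\Big|\; x_V^{0^V} = x_V\Big],\]
for which $\int f \cdot g \, d\mu^V_E = \|f\|_{U^V_\infty(\mu^V_E)}^{2^{|V|}} > 0$. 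The task reduces to showing that $g$ is approximable in $L^2(\mu^V_E)$ by $\mathcal{B}_{V,{<}V}$-measurable functions.

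First I would unfold $\int f g \, d\mu^V_E$ as $\int \prod_\omega f(x_V^\omega) \, d\mu^{V+V}_E$ and run the Gowers--Cauchy--Schwarz machinery of Section~\ref{seminorms} coordinate-by-coordinate, using the Fubini decomposition of $\mu^{V+V}_E$ across each $v \in V$ afforded by canonicity. This converts the hypothesis into a family of correlation estimates in which each factor depends on strictly fewer than $|V|$ coordinates of the doubled space.

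Second---and this is where the degree bound $\sum_{I} 2^{2|I|}$ enters---I would introduce a further doubling, working inside $\mu^{V+V+V+V}_E$, and test $g$ against an arbitrary definable set $B$. This second doubling admits a Cauchy--Schwarz step that rewrites $\int g\, \chi_B \, d\mu^V_E$ as an integral of a product of $f$- and $\chi_B$-factors over the quadrupled index set, with each factor supported on a set of size $|V|$ that is a proper subset of the enlarged index. Collapsing the extra copies back via canonical Fubini then exhibits $g$ (up to $L^2$-error) as a sum of products of functions measurable with respect to $\mathcal{B}_{V,I}$ for proper $I \subsetneq V$, yielding the required $\mathcal{B}_{V,{<}V}$-correlation.

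The main obstacle is the non-product nature of $\mu^V_E$: in Theorem~\ref{converse_product} the conditional measure $\mu^{V+V}_E(\cdot \mid x_V^{0^V})$ was simply the product measure $\mu^V_E$, so the $\omega = 1^V$ factor decoupled cleanly from $x_V^{0^V}$ and the whole argument collapsed to one application of $\mathbb{E}(f\mid\mathcal{B}_{V,{<}V}) = 0$. For a general canonical family this conditional measure genuinely depends on $x_V^{0^V}$, and the $\omega = 1^V$ factor does not decouple. The second doubling, enabled by the degree assumption $\sum_I 2^{2|I|}$, is exactly the device that restores enough symmetry for Cauchy--Schwarz to bound this factor by a quantity that, after invoking the Fubini identities of the canonical family, factors through $\mathcal{B}_{V,{<}V}$. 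The bookkeeping of which measures are generated at each doubling, and verifying that all of them live inside the canonical family of degree $\sum_I 2^{2|I|}$, is the technical heart of the argument.
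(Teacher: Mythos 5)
There is a genuine gap, and it sits precisely where your proposal leans hardest on hand-waving. The paper's proof does not go through the dual function $g$ at all, and for a good reason: in the product case the doubled measure factors, so the dual function is manifestly a product of conditional expectations each depending on fewer than $|V|$ coordinates; once $\mu^V_E$ is not a product measure, there is no such free decomposition, and showing that $g$ is approximable by $\mathcal{B}_{V,{<}V}$-measurable functions is \emph{equivalent} to the theorem you are trying to prove. Your proposed mechanism---quadrupling to $\mu^{V+V+V+V}_E$, testing against a definable $B$, running Cauchy--Schwarz, then ``collapsing the extra copies back via Fubini''---does not produce such a decomposition. Cauchy--Schwarz gives inequalities, not structural decompositions of $g$ into sums of $\mathcal{B}_{V,I}$-measurable products; nothing in the quadrupled picture tells you \emph{which} functions are $\mathcal{B}_{V,{<}V}$-measurable, which is exactly the missing ingredient. (Also, a genuine quadrupling would require degree roughly $\sum_I 2^{4|I|}$, not the stated $\sum_I 2^{2|I|}$; the theorem's degree is exactly calibrated for a single doubling.)

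The paper's actual argument proceeds by induction on a subset $I\subseteq V$ of ``doubled'' coordinates, peeling them off one vertex $v$ at a time. The key technical step is to define a partial inner integral $G(x_{V'},x^0_{I'},x^1_{I'})$ (an integral of the $\omega(v)=1$ factors over $\mu^v$), and then prove that $G$ is $\mathcal{B}_{W,\mathcal{J}}$-measurable for the appropriate non-principal index set $\mathcal{J}$. That measurability is established by invoking Theorem~\ref{converse_nonprincipal}, whose proof in turn hinges on the model-theoretic $\mathbf{max}$ function symbols and the definability structure baked into the canonical family---none of which appears in your proposal. The pairs $\{(i,0),(i,1)\}$ making up $\mathcal{J}^\bot$ are not joined by any edge of the doubled hypergraph, so the relevant fiber measures are genuine product measures and Theorem~\ref{converse_product} supplies the base case. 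Once $G$ is known to be $\mathcal{B}_{W,\mathcal{J}}$-measurable, it is approximated by sums of products $\prod_\omega g_{\omega,n,N}$, these are absorbed into the $f_{\omega 0}$ factors, and the inductive hypothesis on $I'$ closes the argument. Your proposal correctly identifies the obstacle (non-decoupling of the $\omega=1^V$ factor) but offers no substitute for the measurability lemma and the definability machinery that make the paper's induction go through.
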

\begin{proof}
We show a more general result:
\begin{quote}
Let $I\subseteq V$ be given and let $V'=V\setminus I$.  Let $\{f_\omega\}_{\omega\in \{0,1\}^\omega}$ be $L^\infty(\mu^V_E)$ functions such that
\[F(x_{V'},x^0_I,x^1_I)=\prod_{\omega\in\{0,1\}^I}f_{\omega}(x_{V'},x^\omega_I)\]
be such that
\[0<\int F(x_{V'},x^0_I,x^1_I)\, d\mu^{V+I}_{E}.\]
Then $\|\mathbb{E}(f_{0^I}\mid\mathcal{B}_{V,<V})\|_{L^2(\mu^V_{E})}>0$.
\end{quote}
The main result is then the case where $I=V$ and $f_{\omega}=f$ for all $\omega$.


We proceed by induction on $|I|$.  When $I=\emptyset$, this is trivial---$F=f_{0^\emptyset}$, and $0<\int f\,d\mu^V_E$ implies that $f$ has non-trivial projection onto the trivial $\sigma$-algebra, so certainly also onto $\mathcal{B}_{V,<V}$.   So assume $|I|>0$.

Fix some $v\in I$, and let $I'=I\setminus\{v\}$.  For each $\omega\in\{0,1\}^{I'}$ and each $b\in\{0,1\}$ we will write $\omega b$ for the corresponding elements of $\{0,1\}^I$.  We define a function
\[G(x_{V'},x^0_{I'},x^1_{I'})=\int\prod_{\omega\in\{0,1\}^{I'}}f_{\omega 1}(x_{V'},x^\omega_{I'},x^1_v)\,d\mu^v_{E,x_{V'}\cup x^0_{I'}\cup x^1_{I'}}.\]

Let $W=V'\cup(I'\times\{0,1\})$; recall that there is an $E'$ such that $\mu^{V\setminus\{v\}+I'}_E=\mu^W_{E'}$.  Each edge in $E'$ corresponds to an edge in $E$, and each $e\in E$ leads to at most $2^{|e\cap V|}$ edges in $E'$, so $\sum_{e\in E'}2^{|e\cap W|}\leq \sum_{e\in E}2^{2|e|}$.  (There is likely some room here for optimizing the exact size of the canonical family needed.)

Let $\mathcal{J}\subseteq \mathcal{P}(W)$ be the collection of subsets of the form
\[V'\cup\{(i,\omega(i))\mid i\in I'\}\]
for some $\omega\in\{0,1\}^{I'}$.  That is, $\mathcal{J}$ consists of those sets which contain $V'$ together with exactly one copy of each coordinate from $I'$.  The elements of $\mathcal{J}^{\bot}$ are pairs $J=\{(i,0),(i,1)\}$ for some $i\in I'$.  No edge of $E'$ contains both elements of a pair $\{(i,0),(i,1)\}$, so $\mu^J_{E'}$ and $\mu^J_{E',x_{W\setminus J'}}$ are product measures, and in particular, $U^J_\infty(\mu^J_{E'})$ and $U^J_\infty(\mu^J_{E',x_{W\setminus J}})$ are characteristic by Theorem \ref{converse_product}.

We claim that $G$ is $\mathcal{B}_{W,\mathcal{J}}$-measurable (with respect to the measure $\mu^W_{E'}$).  Suppose $H$ is a function with $\|\mathbb{E}(H\mid\mathcal{B}_{W,\mathcal{J}})\|_{L^2(\mu^W_{E'})}=0$.  By Theorem \ref{converse_nonprincipal}, $U^{W,\mathcal{J}^{\bot}}_\infty(\mu^W_{E'})$ is characteristic, so $\|H\|_{U^{W,\mathcal{J}^{\bot}}_\infty(\mu^W_{E'})}=0$, and therefore for $\mu^v_E$-almost-every $x^1_v$, $\|H\|_{U^{W,\mathcal{J}^{\bot}}_\infty(\mu^W_{E', x^1_v})}=0$, and so $\|\mathbb{E}(H\mid\mathcal{B}_{W,\mathcal{J}})\|_{L^2(\mu^W_{E',x^1_v})}=0$.  Then
\begin{align*}
&\ \ \ \;\int H(x_{V'},x^0_{I'},x^1_{I'})\cdot G(x_{V'},x^0_{I'},x^1_{I'})\, d\mu^W_{E'}\\
&=\iint H \prod_{\omega\in\{0,1\}^{I'}}f_{\omega 1}(x_{V'},x^\omega_{I'},x^1_v)\, d\mu^v_{E, x_{V'}\cup x^0_{I'}\cup x^1_{I'}} d\mu^W_{E'}\\
&=\iint H \prod_{\omega\in\{0,1\}^{I'}}f_{\omega 1}(x_{V'},x^\omega_{I'},x^1_v)\, d\mu^W_{E', x^1_v}d\mu^v_{E}\\
&=0.
\end{align*}
Since this holds for any $H$ with $\|\mathbb{E}(H\mid\mathcal{B}_{W,\mathcal{J}})\|_{L^2(\mu^W_{E'})}=0$, it follows that $G$ is $\mathcal{B}_{W,\mathcal{J}}$-measurable.  This means that we may write
\[ G(x_{V'},x^0_{I'},x^1_{I'})=\lim_{N\rightarrow\infty}\sum_{n\leq N}\prod_{\omega\in\{0,1\}^{I'}}g_{\omega,n,N}(x_{V'},x^\omega_{I'})\]
up to the $L^2(\mu^W_{E'})$-norm.  We may assume the $g_{\omega,n,N}$ are $L^\infty(\mu^W_{E'})$ functions.

Then we have some $\epsilon$ such that
\begin{align*}
  0<\epsilon
&<\int\prod_{\omega\in\{0,1\}^I}f_{\omega}(x_{V'},x^\omega_I)\, d\mu^{V+I}_{E}\\
&=\int\prod_{\omega\in\{0,1\}^{I'}}f_{\omega0}(x_{V'},x^\omega_{I'},x^0_v)\prod_{\omega\in\{0,1\}^{I'}}f_{\omega1}(x_{V'},x^\omega_{I'},x^1_v)\, d\mu^{V+I}_{E}\\
&=\int\prod_{\omega\in\{0,1\}^{I'}}f_{\omega0}(x_{V'},x^\omega_{I'},x^0_v) G(x_{V'},x^0_{I'},x^1_{I'})\, d\mu^{V+I'}_{E}.\\
\end{align*}
Choosing $N$ large enough, we may make
\begin{align*}
&\|G(x_{V'},x^0_{I'},x^1_{I'})-\sum_{n\leq N}\prod_{\omega\in\{0,1\}^{I'}}g_{\omega,n,N}(x_{V'},x^\omega_{I'})\|_{L^2(\mu^W_{E'})}\\
\end{align*}
arbitrarily small relative to $\epsilon$ and the $L^\infty(\mu^{V+I'}_E)$-norms of the $f_{\omega0}$; therefore
\begin{align*}
0<\epsilon/2
&<\int\prod_{\omega\in\{0,1\}^{I'}}f_{\omega0}(x_{V'},x^\omega_I,x^0_v) \sum_{n\leq N}\prod_{\omega\in\{0,1\}^{I'}}g_{\omega,n,N}(x_{V'},x^\omega_{I'})\,d\mu^{V+I'}_{E}\\
&= \sum_{n,N}\int\prod_{\omega\in\{0,1\}^{I'}}f_{\omega0}(x_{V'},x^\omega_I,x^0_v)g_{\omega,n,N}(x_{V'},x^\omega_{I'})\,d\mu^{V+I'}_{E}\\
\end{align*}
In particular, there must be some $n$ such that
\[0<\int\prod_{\omega\in\{0,1\}^{I'}}f_{\omega0}(x_{V'},x^\omega_I,x^0_v)g_{\omega,n,N}(x_{V'},x^\omega_{I'})\,d\mu^{V+I'}_{E}.\]
Consider the functions given by, for each $\omega\in\{0,1\}^{I'}$, setting $f'_{\omega}=f_{\omega0}g_{\omega,n,N}$.  We apply the inductive hypothesis to $I'$, and conclude that $\|\mathbb{E}(f'_{0^{I'}}\mid\mathcal{B}_{V,<V})\|_{L^2(\mu^V_{E})}>0$.  Since $g_{0^{I'},n,N}$ is $\mathcal{B}_{V,V'\cup I'}\subseteq \mathcal{B}_{V,<V}$-measurable, it follows that $\|\mathbb{E}(f_{0^I}\mid\mathcal{B}_{V,<V})\|_{L^2(\mu^V_{E})}>0$ as well.
\end{proof}

We can now give a sparse version of the hypergraph removal lemma:
\begin{reptheorem}{thm:main}
  For every $k$-uniform hypergraph $K$ on vertices $V$ and every constant $\epsilon>0$, there are $\delta,\zeta$ so that whenever $\Gamma$ is a $\zeta,|K|2^{2k}$-ccc $k$-uniform hypergraph and $A\subseteq\Gamma$ with $\frac{hom(K,A)}{|\Gamma^{V}_K|}<\delta$, there is a subset $L$ of $A$ with $|L|\leq\epsilon|\Gamma|$ such that $hom(K,A\setminus L)=0$.
\end{reptheorem}
\begin{proof}
  Suppose not.  Let $K,\epsilon$ be a counterexample.  Since there are no such $\delta,\zeta$, for each $n$ we may choose $k$-uniform hypergraphs $H_n\subseteq\Gamma_n$ with $\Gamma_n$ $1/n,|K|2^{2k}$-ccc and $\frac{hom(K,H)}{|\Gamma^{V}_K|}<1/n$.  Let $\mathfrak{M}$ be the model given by Theorem \ref{ultraproduct}.

Let $V$ be the set of vertices of $K$.  For any disjoint $J_0,J_1\subseteq V$, Theorem \ref{converse_random} implies that $U^{J_0}_\infty(\mu^{J_0}_K)$ and, for $\mu^{J_1}_K$-almost every $x_{J_1}$, $U^{J_0}_\infty(\mu^{J_0}_{K,x_{J_1}})$ are characteristic.  By Theorem \ref{converse_nonprincipal}, it follows that each $U^{J,\mathcal{I}^{\bot}}_\infty(\mu^V_E)$ is characteristic, and so by Lemma \ref{char_implies_regular}, $\mu^V_E$ has $J$-regularity.

For each $I\in K$, let $A_I=\{x_{V}\mid x_I\in A\}$.  Since $\frac{hom(K,A_n)}{|(\Gamma_n)^{V}_K|}\rightarrow 0$, we have $\mu^V_K(\bigcap_{I\in K}A_I)=0$.  Then by Theorem \ref{counting}, there must be definable sets $B_I$ such that $\mu^I_K(A_I\setminus B_I)<\epsilon/|K|$ and $\bigcap_{I\in K}B_I=\emptyset$.  Let $L=\bigcup_{I\in K}(A_I\setminus B_I)$, so $\mu^{[1,k]}_{\{[1,k]\}}(L)<\epsilon$.  $L$ is definable from parameters in $M$, and therefore the properties $\mu^{[1,k]}_{\{1,k\}}(L)<\epsilon$ and $\bigcap_{I\in K}(A_I\setminus L)=\emptyset$ are witnessed by formulas.  Therefore there must be arbitrarily large finite models where these formulas are satisfied.  But this contradicts the choice of the hypergraphs $H_n,\Gamma_n$.
\end{proof}

\section{Conclusion}

Many other notions of pseudorandomness for hypergraphs that have been considered \cite{chung:MR1138430,chung:MR1068494,kohayakawa:MR1883869,conlon09}.  The next step towards developing a rich analytic approach to working with sparse random hypergraphs would be a detailed investigation of the relationship between notions of pseudorandomness in the finite setting and the corresponding properties of measures in the infinitary setting.  With weaker notions of pseudorandomness, we would expect to lose the full Fubini property, but the notions that replace it are likely to be of interest themselves.

Another interesting direction would be to weaken the notion of randomness to allow the ambient hypergraph to be, say, bipartite.  This is needed if one hopes to use these methods to prove sparse versions of Szemer\'edi's Theorem in the style of the Green-Tao theorem on arithmetic progressions in the primes \cite{green:MR2415379}.  (Conlon, Fox, and Zhou have recently extended their version of sparse graph removal to precisely such a proof \cite{MR3361771}.)

The approach Conlon and Gowers use to prove hypergraph regularity \cite{conlon:2011} depends, like our approach, on the use of various norms to detect the presence of certain properties.  Their norms are much more narrowly tailored than the general uniformity norms.  The uniformity norms are strikingly natural in the infinitary setting, lining up with canonical algebras of definable sets; it is possible that other norms also correspond to algebras which might be of independent interest.

\printbibliography

\begin{dajauthors}
\begin{authorinfo}[hpt]
  Henry Towsner\\
  Department of Mathematics\\
  University of Pennsylvania\\
  209 South 33rd Street, Philadelphia, PA 19104-6395, USA\\
  htowsner\imageat{}gmail\imagedot{}com \\
  \url{http://www.math.upenn.edu/~htowsner}
\end{authorinfo}
\end{dajauthors}

\end{document}